\documentclass[hidelinks,onefignum,onetabnum]{siamart220329}
\usepackage{mathtools}
\usepackage{algorithm}
\usepackage{algorithmic}
\usepackage{subfigure}
\usepackage{enumitem}
\usepackage{amsmath}
\usepackage{amssymb}

\usepackage{microtype}

\makeatletter
\def\cref@override@label@type#1\@nil#2{#1}
\makeatother

\newlist{questions}{enumerate}{1}
\setlist[questions]{label=(Q\arabic*), leftmargin=*, align=left}

\usepackage{lipsum}
\usepackage{amsfonts}
\usepackage{graphicx}
\usepackage{epstopdf}
\usepackage{algorithmic}
\ifpdf
  \DeclareGraphicsExtensions{.eps,.pdf,.png,.jpg}
\else
  \DeclareGraphicsExtensions{.eps}
\fi

\newsiamremark{remark}{Remark}
\newsiamremark{hypothesis}{Hypothesis}
\crefname{hypothesis}{Hypothesis}{Hypotheses}
\newsiamthm{claim}{Claim}

\headers{2nd order explicit splitting for Stokes-Biot System}{}

\title{Second Order Explicit Splitting Scheme for Fluid-Poroelastic Structure Interaction Problems \thanks{Submitted to the editors DATE.
\funding{\v{C}ani\'{c}'s research has been supported in part by the
National Science Foundation under grants DMS-2408928, DMS-2247000 and by the  U.S. Department of Energy, Office of Science, Office of Advanced Scientific Computing Research's Applied Mathematics Competitive Portfolios program under Contract No. AC02-05CH11231. Yifan Wang’s research has been supported in part by the National Science Foundation under grant DMS-2247001, CPRIT Texas under grant RP260780, and by a Simons Foundation Travel Award.}}}

\author{
Yifan Wang \thanks{Department of Mathematics and Statistics, Texas Tech University, Lubbock, TX, USA (\email{yifan.wang@ttu.edu}).}
\and 
Jeonghun Lee \thanks{Department of Mathematics, Baylor University, Waco, TX, USA (\email{Jeonghun\_Lee@baylor.edu}).}
\and 
Sun\v{C}ica \v{C}ani\'{c} \thanks{\textbf{Corresponding author.} Department of Mathematics, University of California, Berkeley, Berkeley, CA, USA (\email{canics@berkeley.edu}).}
}

\usepackage{amsopn}

\allowdisplaybreaks[4]
\ifpdf
\hypersetup{
  pdftitle={Error Analysis of the Explicit Splitting Scheme for Fluid-Poroelastic Structure Interaction Problems},
  pdfauthor={Yifan Wang, Jeonghun Lee Sun\v{C}ica \v{C}ani\'{c}}
}
\fi

\newcommand{\sunny}{\color{black}}

\begin{document}

\maketitle
\tableofcontents

\begin{abstract}
Efficient and provably accurate partitioned methods for fluid--poroelastic structure interaction remain challenging because explicit treatment of the Stokes--Biot interface coupling condition can easily compromise stability. In this work,
we develop and analyze a fully discrete, second-order, explicit splitting scheme for the time-dependent Stokes--Biot problem on fixed domains. The method combines BDF2
time stepping with second-order Adams--Bashforth (AB2) extrapolation of the
interface data through a Robin reformulation, yielding a partitioned algorithm
in which the Stokes and Biot subproblems are solved independently and in parallel at each time step.

The main analytical contribution is a rigorous stability and error analysis for this second-order explicit coupling strategy. Using BDF2 energy identities, a sharp decomposition of the extrapolated interface terms, and discrete trace estimates, we prove a closed stability bound under a parabolic CFL condition. We then derive an \emph{a priori} error estimate through a projection-based framework involving a Fortin projection for the fluid variables and Ritz-type projections for the poroelastic variables. The analysis identifies the consistency defects arising from BDF2 time discretization, AB2 interface extrapolation, and the projected kinematic relation, and shows that the total errors in the fluid velocity, structure velocity, pore pressure, and elastic displacement are bounded by
\(C\,(h^k+\Delta t^2)\) in the bulk energy norms for \(1\le k\le 3\).
Numerical experiments with manufactured solutions confirm the predicted second-order temporal convergence and optimal-order spatial convergence. In addition, we include a moving-domain numerical example in which the fluid is governed by the Navier–Stokes equations, demonstrating the applicability of the proposed partitioned strategy beyond the fixed-domain Stokes--Biot setting analyzed. These results show that the scheme provides a parallelizable, second-order accurate, and mathematically justified explicit partitioned method for fluid--poroelastic interaction, with strong potential for more general moving-domain flow problems. 

\end{abstract}
\begin{keywords}
Fluid-poroelastic structure interaction, Stokes-Biot problem, 2nd order explicit partitioned scheme, Robin interface coupling, stability analysis, a priori error estimate.
\end{keywords}

\begin{MSCcodes}
65M22,	65M60, 74F10, 76D05, 76S05
\end{MSCcodes}

\section{Introduction}

Fluid-poroelastic structure interaction problems arise in many settings where a free fluid interacts with a deformable porous medium, including biological tissue perfusion, flow through bioartificial devices, and transport in deformable porous materials \cite{Buka202404,fluids7070222,Benjamin2014,Martina2014,CanicSiam2021}. 
A standard mathematical description of these processes couples the incompressible time-dependent Stokes equations in the fluid region with the Biot system in the poroelastic region \cite{biot1941general,biot1955theory}. 
This Stokes-Biot framework has been studied extensively, both from the modeling and numerical points of view, and has led to a wide range of monolithic and partitioned discretization strategies \cite{ambartsumyan2018lagrange,wen2020strongly,guo2022decoupled,cesmelioglu2017analysis,CanicBook,BociuMulti,SingularLimit,guo2025uncond,Andrew2026,Yotov2023,NitCoup,MultiLayer,Martina2014,RobRob,MACscheme,CLR-Stokes-Biot:2023,CLR-NSE-Biot:2024}. 

A central difficulty in Stokes-Biot coupling is the treatment of the interface conditions. 
These conditions encode continuity of normal flux together with balance of normal and tangential stresses, and they create a strong coupling between the free-fluid and poroelastic dynamics. 
Monolithic methods handle this coupling robustly, but they typically require solving a large coupled system at every time step. 
Partitioned methods are computationally more attractive because they preserve the structure of the fluid and poroelastic subproblems and allow the use of specialized solvers, but explicit or loosely coupled formulations must be designed carefully in order to maintain stability and accuracy. 
Weak interface enforcement based on Nitsche- or Robin-type formulations has proved particularly effective in this context, since it leads to consistent coupling mechanisms that are well suited to partitioned and parallel algorithms \cite{BADIA20097986,BUKAC2015138,MartinaOyekole,cesmelioglu2016optimization, Wang2026ExplicitSplitting, wang2025}.

Motivated by these considerations, we study a fully discrete explicit splitting method for the time-dependent Stokes-Biot problem on fixed domains. 
The method is based on a Robin reformulation of the interface conditions, with tangential and normal coupling controlled by the parameters $\gamma>0$ and $L>0$, respectively. 
At the fully discrete level, the unknowns at time $t_{n+1}$ are computed by combining a BDF2 discretization of the subdomain evolution equations with AB2 extrapolation of the interface data from previous time levels. 
As a result, the Stokes and Biot subproblems can be solved independently at each time step, which makes the scheme naturally parallelizable while retaining a consistent treatment of the interface conditions.

The main contributions of this paper are: (i) a discrete stability estimate
under a parabolic CFL condition linking the time step and mesh size; (ii) an
\emph{a priori} error analysis showing that the total errors in fluid velocity,
structure velocity, pore pressure, and elastic displacement are bounded by
\(C\,(h^k+\Delta t^2)\) in the bulk energy norms for \(1\le k\le 3\); and
(iii) numerical experiments confirming the predicted second-order temporal and
optimal-order spatial convergence rates.

The remainder of the paper is organized as follows. 
Section 2 introduces the coupled Stokes-Biot model, its interface conditions, and the Robin reformulation used for splitting. 
Section 3 presents the fully discrete BDF2-AB2 partitioned scheme and establishes the discrete stability estimate. 
Section 4 develops the projection framework and proves the a priori error bounds. 
Section 5 reports numerical experiments that validate the theoretical convergence results.

\section{Continuous Problem}
\label{sec:continuous}

\subsection{Notation and function spaces}
\label{subsec:notation}

Let $\Omega_f\subset\mathbb{R}^d$ ($d=2,3$) denote the fluid domain and
$\Omega_p\subset\mathbb{R}^d$ the poroelastic domain. We assume throughout
that both subdomains are fixed in time. Their common interface is denoted by
\[
\Gamma=\partial\Omega_f\cap\partial\Omega_p.
\]
We write $\boldsymbol{n}_f$ for the unit outward normal on $\partial\Omega_f$
and $\boldsymbol{n}_p$ for the unit outward normal on $\partial\Omega_p$, so that
\[
\boldsymbol{n}_p=-\boldsymbol{n}_f \qquad \text{on } \Gamma.
\]
We also define the tangential projection operators
\[
\boldsymbol{P}_f\boldsymbol{v}
= \boldsymbol{v}-(\boldsymbol{v}\cdot\boldsymbol{n}_f)\boldsymbol{n}_f,
\qquad
\boldsymbol{P}_p\boldsymbol{v}
= \boldsymbol{v}-(\boldsymbol{v}\cdot\boldsymbol{n}_p)\boldsymbol{n}_p.
\]
The outer boundaries of the fluid and poroelastic domains are denoted by
$\Sigma_{D,f}$ and $\Sigma_{D,p}$, respectively.

For a Banach space $X$, a final time $T>0$, and an exponent $1\le q\le\infty$,
the Bochner space $L^q(0,T;X)$ consists of (equivalence classes of)
strongly measurable functions $v:(0,T)\to X$ such that
\[
\|v\|_{L^q(0,T;X)}
:=
\left(\int_0^T \|v(t)\|_X^q\,dt\right)^{1/q}<\infty
\qquad(1\le q<\infty),
\]
with the usual essential-supremum norm when $q=\infty$. For a nonnegative
integer $m$, the space $H^m(0,T;X)$ consists of all $v\in L^2(0,T;X)$ whose
weak time derivatives $\partial_t^j v$, $j=1,\dots,m$, exist in the sense of
$X$-valued distributions on $(0,T)$ and belong to $L^2(0,T;X)$, equipped with
the norm
\[
\|v\|_{H^m(0,T;X)}^2
:=
\sum_{j=0}^{m}\|\partial_t^j v\|_{L^2(0,T;X)}^2.
\]
The space $W^{m,\infty}(0,T;X)$ is defined analogously, with the $L^2(0,T;X)$
norms in each term replaced by $L^\infty(0,T;X)$ norms. When $X=H^k(\Omega)$
for a spatial domain $\Omega$ and integer $k\ge0$, functions
$v\in L^q(0,T;H^k(\Omega))$ may be identified with functions
$v=v(\boldsymbol x,t)$ on $\Omega\times(0,T)$ that, for a.e.\ $t\in(0,T)$,
belong to $H^k(\Omega)$ as functions of $\boldsymbol x$, with $t\mapsto
\|v(\cdot,t)\|_{H^k(\Omega)}$ in $L^q(0,T)$. We use these spaces, in
particular $L^2(0,T;X)$ and $H^1(0,T;X)$ with $X=L^2(\Omega_f)$,
$L^2(\Omega_p)$, $\boldsymbol{V}_f$, and $\boldsymbol{V}_p$ (the latter two
defined in Section~\ref{subsec:weak}), to state the weak formulation of the
coupled problem and the regularity assumptions on the exact solution used in
the error analysis of Section~\ref{sec:error}.

\subsection{Governing equations}
\label{subsec:governing}

The coupled Stokes-Biot system consists of the time-dependent incompressible
Stokes equations in $\Omega_f$ and the Biot poroelasticity equations in
$\Omega_p$:
\begin{subequations}\label{eq:stokes-biot}
\begin{align}
\rho_f \partial_t \boldsymbol{u}
- \nabla\cdot \boldsymbol{\sigma}_f(\boldsymbol{u},p_f)
&= \boldsymbol{f}_f
\qquad &&\text{in }\Omega_f\times(0,T), \label{eq:stokes_momentum}\\
\nabla\cdot \boldsymbol{u}
&= 0
\qquad &&\text{in }\Omega_f\times(0,T), \label{eq:stokes_incompressibility}\\
\rho_p \partial_t \boldsymbol{\xi}
- \nabla\cdot \boldsymbol{\sigma}_p(\boldsymbol{\eta},\phi)
&= \boldsymbol{f}_p
\qquad &&\text{in }\Omega_p\times(0,T), \label{eq:biot_momentum}\\
\boldsymbol{\xi} &= \partial_t \boldsymbol{\eta}
\qquad &&\text{in }\Omega_p\times(0,T), \label{eq:kinematic_relation}\\
C_0 \partial_t \phi + \alpha \nabla\cdot \boldsymbol{\xi}
- \nabla\cdot(K\nabla\phi)
&= g_p
\qquad &&\text{in }\Omega_p\times(0,T), \label{eq:biot_mass}\\
\boldsymbol{u}_p &= -K\nabla\phi
\qquad &&\text{in }\Omega_p\times(0,T). \label{eq:darcy_velocity}
\end{align}
\end{subequations}
Here $\boldsymbol{u}$ and $p_f$ denote the fluid velocity and pressure,
$\boldsymbol{\eta}$ and $\boldsymbol{\xi}$ denote the poroelastic
displacement and velocity, $\phi$ is the pore pressure, and
$\boldsymbol{u}_p$ is the Darcy filtration velocity.

The stress tensors are given by
\begin{equation}\label{eq:stresses}
\boldsymbol{\sigma}_f(\boldsymbol{u},p_f)
= 2\mu_f \boldsymbol{D}(\boldsymbol{u}) - p_f \boldsymbol{I},
\qquad
\boldsymbol{\sigma}_p(\boldsymbol{\eta},\phi)
= 2\mu_p \boldsymbol{D}(\boldsymbol{\eta})
+ \lambda_p (\nabla\cdot\boldsymbol{\eta})\boldsymbol{I}
- \alpha \phi \boldsymbol{I},
\end{equation}
where
\[
\boldsymbol{D}(\boldsymbol{v})
= \frac{\nabla\boldsymbol{v}+\nabla\boldsymbol{v}^{\top}}{2}.
\]
The parameters $\rho_f,\mu_f$ are the fluid density and viscosity,
$\rho_p,\mu_p,\lambda_p$ are the poroelastic density and Lam\'e constants,
$\alpha$ is the Biot-Willis coefficient, $K$ is the hydraulic conductivity
tensor, and $C_0 > 0$ is the storage coefficient.

Throughout the paper we assume that the physical parameters are constants
satisfying
\[
\rho_f,\ \mu_f,\ \rho_p,\ \mu_p,\ \lambda_p,\ C_0 > 0,
\qquad
\alpha\in(0,1],
\]
and that the hydraulic conductivity tensor $K$ is symmetric and uniformly
positive definite, i.e.\ there exists $k_0>0$ such that
\begin{equation}\label{eq:K_coercive_intro}
(K\nabla \psi,\nabla \psi)_p \ge k_0 \|\nabla \psi\|_{L^2(\Omega_p)}^2
\qquad
\forall \psi\in H^1(\Omega_p).
\end{equation}
The interface coupling parameters $\gamma>0$ and $L>0$, introduced in
Section~\ref{subsec:interface} below, are also fixed positive constants. All
constants appearing in the stability and error estimates of Sections~3--4
depend on these parameters, but not on the discretization parameters $h$ and
$\Delta t$.

\subsection{Interface conditions}
\label{subsec:interface}

The fluid and poroelastic subproblems are coupled through the following
conditions on $\Gamma\times(0,T)$:
\begin{subequations}\label{eq:interface_conditions}
\begin{align}
(\boldsymbol{\xi}+\boldsymbol{u}_p)\cdot\boldsymbol{n}_f
&= \boldsymbol{u}\cdot\boldsymbol{n}_f,
\label{eq:interface_flux}\\
\boldsymbol{\tau}_{f,j}\cdot
\boldsymbol{\sigma}_f(\boldsymbol{u},p_f)\boldsymbol{n}_f
&=
-\gamma (\boldsymbol{u}-\boldsymbol{\xi})\cdot \boldsymbol{\tau}_{f,j},
\qquad j=1,\dots,d-1,
\label{eq:interface_bjs}\\
\boldsymbol{n}_f\cdot
\boldsymbol{\sigma}_f(\boldsymbol{u},p_f)\boldsymbol{n}_f
&= -\phi,
\label{eq:interface_normal_stress}\\
\boldsymbol{\sigma}_f(\boldsymbol{u},p_f)\boldsymbol{n}_f
&=
\boldsymbol{\sigma}_p(\boldsymbol{\eta},\phi)\boldsymbol{n}_f.
\label{eq:interface_stress_balance}
\end{align}
\end{subequations}
Here $\{\boldsymbol{\tau}_{f,j}\}_{j=1}^{d-1}$ denotes an orthonormal basis
of tangential vectors on $\Gamma$, and $\gamma>0$ is the
Beavers-Joseph-Saffman coefficient.

\subsection{Weak formulation}
\label{subsec:weak}

We introduce the spaces
\[
\begin{aligned}
\boldsymbol{V}_f
&=
\{\boldsymbol{v}\in H^1(\Omega_f)^d:\boldsymbol{v}=0
\text{ on }\Sigma_{D,f}\},
\qquad
Q_f = L^2(\Omega_f),\\
\boldsymbol{V}_p
&=
\{\boldsymbol{\zeta}\in H^1(\Omega_p)^d:\boldsymbol{\zeta}=0
\text{ on }\Sigma_{D,p}\},
\qquad
Q_p=\left\{\psi \in H^1\left(\Omega_p\right): \psi=0 \text { on } \Sigma_{D, p}\right\}.
\end{aligned}
\]
We use $(\cdot,\cdot)_f$ and $(\cdot,\cdot)_p$ for the $L^2$ inner products on
$\Omega_f$ and $\Omega_p$, and $\langle\cdot,\cdot\rangle_\Gamma$ for the
duality pairing on $\Gamma$.

The weak formulation reads as follows: find
\[
\boldsymbol{u}\in L^2(0,T;\boldsymbol{V}_f)\cap H^1(0,T;L^2(\Omega_f)^d),
\qquad
p_f\in L^2(0,T;Q_f),
\]
\[
(\boldsymbol{\eta},\boldsymbol{\xi})
\in L^2(0,T;\boldsymbol{V}_p)^2\cap H^1(0,T;L^2(\Omega_p)^d)^2,
\qquad
\phi\in L^2(0,T;Q_p)\cap H^1(0,T;L^2(\Omega_p)),
\]
with $\boldsymbol{u}(0)=\boldsymbol{u}_0$, $\boldsymbol{\eta}(0)=\boldsymbol{\eta}_0$,
$\boldsymbol{\xi}(0)=\boldsymbol{\xi}_0$, and $\phi(0)=\phi_0$, such that for
almost every $t\in(0,T)$,
\begin{align}
&\rho_f(\partial_t\boldsymbol{u},\boldsymbol{v})_f
+2\mu_f(\boldsymbol{D}(\boldsymbol{u}),\boldsymbol{D}(\boldsymbol{v}))_f
-(p_f,\nabla\cdot\boldsymbol{v})_f
+(\nabla\cdot\boldsymbol{u},q)_f
\notag\\
&\quad
+\langle \phi,\boldsymbol{v}\cdot\boldsymbol{n}_f\rangle_\Gamma
+\gamma\langle \boldsymbol{P}_f(\boldsymbol{u}-\boldsymbol{\xi}),
\boldsymbol{P}_f(\boldsymbol{v})\rangle_\Gamma
\notag\\
&\quad
+\rho_p(\partial_t\boldsymbol{\xi},\boldsymbol{\zeta})_p
+(\boldsymbol{\xi} - \partial_t \boldsymbol{\eta}, \boldsymbol{v}_p)_p
+2\mu_p(\boldsymbol{D}(\boldsymbol{\eta}),\boldsymbol{D}(\boldsymbol{\zeta}))_p
\notag\\
&\quad
+\lambda_p(\nabla\cdot\boldsymbol{\eta},\nabla\cdot\boldsymbol{\zeta})_p
-\alpha(\phi,\nabla\cdot\boldsymbol{\zeta})_p
\notag\\
&\quad
+C_0(\partial_t\phi,\psi)_p
+\alpha(\nabla\cdot\boldsymbol{\xi},\psi)_p
+(K\nabla\phi,\nabla\psi)_p
\notag\\
&\quad
-\gamma\langle \boldsymbol{P}_p(\boldsymbol{u}-\boldsymbol{\xi}),
\boldsymbol{P}_p(\boldsymbol{\zeta})\rangle_\Gamma
+\langle (\boldsymbol u-\boldsymbol\xi)\cdot\boldsymbol n_p,\psi\rangle_\Gamma
+\langle \phi,\boldsymbol{\zeta}\cdot\boldsymbol{n}_p\rangle_\Gamma
\notag\\
&=
(\boldsymbol{f}_f,\boldsymbol{v})_f
+(\boldsymbol{f}_p,\boldsymbol{\zeta})_p
+(g_p,\psi)_p
\label{eq:weak_coupled}
\end{align}
for all time-independent test functions
\[
(\boldsymbol{v},q,\boldsymbol{\zeta},\psi,\boldsymbol{v}_p)
\in \boldsymbol{V}_f\times Q_f\times \boldsymbol{V}_p\times Q_p\times \boldsymbol{V}_p,
\]
where $\boldsymbol{V}_f, Q_f, \boldsymbol{V}_p, Q_p$ are the spatial spaces
introduced above.

In what follows, we suppress external forcing terms when deriving the
stability and error estimates, in order to simplify the presentation.

\subsection{Decoupled continuous Stokes-Biot system}
\label{subsec:decoupled_continuous}

To derive the fully explicit splitting scheme, we rewrite all the interface terms in Robin
form. For a given penalty parameter $L>0$, the fluid-side interface conditions can be written as
\begin{equation}\label{eq:robin_fluid_cont}
\begin{array}{ll}
\boldsymbol{n}_f\cdot
\big(\boldsymbol{\sigma}_f(\boldsymbol{u},p_f)\boldsymbol{n}_f\big)
+L\,\boldsymbol{u}\cdot\boldsymbol{n}_f
=
L\,\boldsymbol{u}\cdot\boldsymbol{n}_f-\phi,
\\[2mm]
\boldsymbol{P}_f\big(\boldsymbol{\sigma}_f(\boldsymbol{u},p_f)\boldsymbol{n}_f\big)
+\gamma \boldsymbol{P}_f(\boldsymbol{u})
=
\gamma \boldsymbol{P}_f(\boldsymbol{\xi}).
\end{array}
\end{equation}
Similarly, the Biot-side interface conditions can be written as
\begin{equation}\label{eq:robin_biot_cont}
\begin{array}{ll}
\boldsymbol{n}_p\cdot
\big(\boldsymbol{\sigma}_p(\boldsymbol{\eta},\phi)\boldsymbol{n}_p\big)
+\phi
+\boldsymbol{\xi}\cdot\boldsymbol{n}_p
=
\boldsymbol{\xi}\cdot\boldsymbol{n}_p,
\\[2mm]
K\nabla\phi\cdot\boldsymbol{n}_p
+\phi/L
-\boldsymbol{\xi}\cdot\boldsymbol{n}_p
=
\phi/L-\boldsymbol{u}\cdot\boldsymbol{n}_p,
\\[2mm]
\boldsymbol{P}_p\big(\boldsymbol{\sigma}_p(\boldsymbol{\eta},\phi)\boldsymbol{n}_p\big)
+\gamma \boldsymbol{P}_p(\boldsymbol{\xi})
=
\gamma \boldsymbol{P}_p(\boldsymbol{u}).
\end{array}
\end{equation}
In the fully discrete scheme presented below, the left-hand sides of the above Robin boundary conditions
will be imposed at time $t_{n+1}$, while the right-hand sides will be approximated explicitly by AB2 extrapolation from previous time levels, which eventually leads to a parallelizable, fully explicit splitting algorithm.

The decoupled continuous problem can now be written as follows.

{\textbf{Fluid subproblem.}}
Find $(\boldsymbol{u},p_f)\in \boldsymbol{V}_f\times Q_f$ such that,
for all $(\boldsymbol{v},q)\in \boldsymbol{V}_f\times Q_f$,
\begin{align}
&\rho_f \big( \partial_t \boldsymbol{u} , \boldsymbol{v} \big)_{f}
+ 2\mu_f \big( \boldsymbol{D}(\boldsymbol{u}), \boldsymbol{D}(\boldsymbol{v}) \big)_{f}
- (p_f, \nabla \cdot \boldsymbol{v} )_{f}
+ (\nabla \cdot \boldsymbol{u}, q )_{f} \notag\\
&\quad
+ \gamma \langle \boldsymbol{P}_f(\boldsymbol{u}), \boldsymbol{P}_f(\boldsymbol{v}) \rangle_{\Gamma}
+ L \langle \boldsymbol{u} \cdot \boldsymbol{n}_f,\; \boldsymbol{v} \cdot \boldsymbol{n}_f \rangle_{\Gamma} \notag\\
&= \gamma \langle  \boldsymbol{P}_f(\boldsymbol{\xi}), \boldsymbol{P}_f(\boldsymbol{v}) \rangle_{\Gamma}
+ \langle L \boldsymbol{u} \cdot \boldsymbol{n}_f - \phi, \boldsymbol{v} \cdot \boldsymbol{n}_f \rangle_{\Gamma}.
\label{eq:continuous-Stokes-eq}
\end{align}

{\textbf{Biot subproblem.}}
Find $(\boldsymbol{\eta},\boldsymbol{\xi},\phi)\in \boldsymbol{V}_p\times \boldsymbol{V}_p\times Q_p$ such that,
for all $(\boldsymbol{v}_p,\boldsymbol{\zeta},\psi)\in \boldsymbol{V}_p\times \boldsymbol{V}_p\times Q_p$,
\begin{align}
&\rho_p \big( \partial_t \boldsymbol{\xi}, \boldsymbol{\zeta} \big)_{p}
+ 2\mu_p \big( \boldsymbol{D}(\boldsymbol{\eta}), \boldsymbol{D}(\boldsymbol{\zeta}) \big)_{p}
+ \lambda_p \big( \nabla \cdot \boldsymbol{\eta}, \nabla \cdot \boldsymbol{\zeta} \big)_{p}
- \alpha \big( \phi, \nabla \cdot \boldsymbol{\zeta} \big)_{p} \notag\\
&\quad
+(\boldsymbol{\xi}-\partial_t\boldsymbol{\eta},\boldsymbol{v}_p)_p
+ C_0 \big( \partial_t \phi, \psi \big)_{p}
+ \alpha \big( \nabla \cdot \boldsymbol{\xi}, \psi \big)_{p}
+ \big( K \nabla \phi, \nabla \psi \big)_{p} \notag\\
&\quad
+ \gamma \langle \boldsymbol{P}_p(\boldsymbol{\xi}), \boldsymbol{P}_p(\boldsymbol{\zeta}) \rangle_{\Gamma}
+ \langle \boldsymbol{\xi} \cdot \boldsymbol{n}_p,\; \boldsymbol{\zeta} \cdot \boldsymbol{n}_p \rangle_{\Gamma}
+ \langle \phi, \boldsymbol{\zeta} \cdot \boldsymbol{n}_p \rangle_{\Gamma} \notag\\
&\quad
+ \frac{1}{L} \langle \phi, \psi \rangle_{\Gamma}
- \langle \boldsymbol{\xi} \cdot \boldsymbol{n}_p, \psi \rangle_{\Gamma} \notag\\
&= \gamma \langle \boldsymbol{P}_p (\boldsymbol{u}), \boldsymbol{P}_p(\boldsymbol{\zeta}) \rangle_{\Gamma}
+ \langle \boldsymbol{\xi} \cdot \boldsymbol{n}_p, \boldsymbol{\zeta} \cdot \boldsymbol{n}_p \rangle_{\Gamma}
+ \langle -\boldsymbol{u} \cdot \boldsymbol{n}_p + \phi/L, \psi \rangle_{\Gamma}
\label{eq:continuous-poroelasticity-eq}
\end{align}

\subsection{Fully discrete second-order parallel splitting algorithm}
\label{subsec:scheme_bdf2}

We now introduce the fully discrete second-order partitioned scheme for the
fixed-domain Stokes-Biot problem.

Let $\{\mathcal{T}_{f,h}\}$ and $\{\mathcal{T}_{p,h}\}$ be conforming,
shape-regular triangulations of the fixed domains $\Omega_f$ and $\Omega_p$,
respectively, chosen so that the discrete interface coincides with $\Gamma$.
We define the finite element spaces
\[
\boldsymbol{V}_{f,h}\subset \boldsymbol{V}_f,
\qquad
Q_{f,h}\subset Q_f,
\qquad
\boldsymbol{V}_{p,h}\subset \boldsymbol{V}_p,
\qquad
Q_{p,h}\subset Q_p.
\]
Since the domains are fixed, these spaces do not depend on time.

Let the time interval $[0,T]$ be partitioned uniformly by
\[
0=t_0<t_1<\cdots<t_N=T,
\qquad
\Delta t=t_{n+1}-t_n.
\]
For a sequence $\{a^n\}_{n\ge 0}$, we define the BDF2 difference operator
\begin{equation}
\delta_t a^{n+1}
:=
\frac{3a^{n+1}-4a^n+a^{n-1}}{2\Delta t},
\qquad n\ge 1,
\label{BDF2}
\end{equation}
and the AB2 extrapolation
\begin{equation}
a^{\star,n+1}:=2a^n-a^{n-1}.
\label{AB2}
\end{equation}
The displacement and structure velocity are linked by the BDF2 kinematic relation
\begin{equation}\label{eq:xi_equals_bdf2_eta}
\boldsymbol{\xi}_h^{n+1}=\delta_t \boldsymbol{\eta}_h^{n+1}
=
\frac{3\boldsymbol{\eta}_h^{n+1}-4\boldsymbol{\eta}_h^{n}+\boldsymbol{\eta}_h^{n-1}}{2\Delta t}.
\end{equation}

Given $(\boldsymbol{u}_h^n,p_{f,h}^n,\boldsymbol{\eta}_h^n,\boldsymbol{\xi}_h^n,\phi_h^n)$ and $(\boldsymbol{u}_h^{n-1},p_{f,h}^{n-1},\boldsymbol{\eta}_h^{n-1},\boldsymbol{\xi}_h^{n-1},\phi_h^{n-1})$,
we compute the solution at time $t_{n+1}$ by solving the following two subproblems independently and in parallel.

\begin{enumerate}[leftmargin=12pt,labelsep=0.5em]

\item{\textbf{Discrete Stokes subproblem (BDF2).}}

Find $(\boldsymbol{u}_h^{n+1},p_{f,h}^{n+1})\in \boldsymbol{V}_{f,h}\times Q_{f,h}$
such that for all $(\boldsymbol{v}_h,q_h)\in \boldsymbol{V}_{f,h}\times Q_{f,h}$,
\begin{equation}\label{eq:fluid-discrete-bdf2-fixed}
\begin{aligned}
&\rho_f\Big(\delta_t \boldsymbol{u}_h^{n+1},\boldsymbol{v}_h\Big)_{f}
+2\mu_f\Big(\boldsymbol{D}(\boldsymbol{u}_h^{n+1}),\boldsymbol{D}(\boldsymbol{v}_h)\Big)_{f}
-\Big(p_{f,h}^{n+1},\nabla\!\cdot\!\boldsymbol{v}_h\Big)_{f}
+\Big(\nabla\!\cdot\!\boldsymbol{u}_h^{n+1},q_h\Big)_{f}
\\
&\quad
+\gamma\Big\langle \boldsymbol{P}_f\boldsymbol{u}_h^{n+1},
\boldsymbol{P}_f\boldsymbol{v}_h\Big\rangle_{\Gamma}
+L\Big\langle \boldsymbol{u}_h^{n+1}\!\cdot\!\boldsymbol{n}_f,
\boldsymbol{v}_h\!\cdot\!\boldsymbol{n}_f\Big\rangle_{\Gamma}
\\
&=
\gamma\Big\langle\boldsymbol{P}_f\boldsymbol{\xi}_h^{\star,n+1},
\boldsymbol{P}_f\boldsymbol{v}_h\Big\rangle_{\Gamma}
+\Big\langle
L\big(\boldsymbol{u}_h^{\star,n+1}\!\cdot\!\boldsymbol{n}_f\big)-\phi_h^{\star,n+1},
\boldsymbol{v}_h\!\cdot\!\boldsymbol{n}_f\Big\rangle_{\Gamma}.
\end{aligned}
\end{equation}

\item{\textbf{Discrete Biot subproblem (BDF2).}}

Find $(\boldsymbol{\eta}_h^{n+1},\boldsymbol{\xi}_h^{n+1},\phi_h^{n+1})
\in \boldsymbol{V}_{p,h}\times \boldsymbol{V}_{p,h}\times Q_{p,h}$
such that for all $(\boldsymbol{v}_{p,h},\boldsymbol{\zeta}_h,\psi_h)
\in \boldsymbol{V}_{p,h}\times \boldsymbol{V}_{p,h}\times Q_{p,h}$,
\begin{equation}\label{eq:poro-discrete-bdf2-fixed}
\begin{aligned}
&\rho_p\Big(\delta_t \boldsymbol{\xi}_h^{n+1},\boldsymbol{\zeta}_h\Big)_p
+2\mu_p\Big(\boldsymbol{D}(\boldsymbol{\eta}_h^{n+1}),\boldsymbol{D}(\boldsymbol{\zeta}_h)\Big)_p
+\lambda_p\Big(\nabla\!\cdot\!\boldsymbol{\eta}_h^{n+1},\nabla\!\cdot\!\boldsymbol{\zeta}_h\Big)_p
-\alpha\Big(\phi_h^{n+1},\nabla\!\cdot\!\boldsymbol{\zeta}_h\Big)_p
\\
&\quad
+\Big(\boldsymbol{\xi}_h^{n+1},\boldsymbol{v}_{p,h}\Big)_p
-\Big(\delta_t \boldsymbol{\eta}_h^{n+1},\boldsymbol{v}_{p,h}\Big)_p
\\
&\quad
+C_0\Big(\delta_t \phi_h^{n+1},\psi_h\Big)_p
+\alpha\Big(\nabla\!\cdot\!\boldsymbol{\xi}_h^{n+1},\psi_h\Big)_p
+\Big(K\nabla\phi_h^{n+1},\nabla\psi_h\Big)_p
\\
&\quad
+\gamma\Big\langle \boldsymbol{P}_p\boldsymbol{\xi}_h^{n+1},
\boldsymbol{P}_p\boldsymbol{\zeta}_h\Big\rangle_{\Gamma}
+\Big\langle \boldsymbol{\xi}_h^{n+1}\!\cdot\!\boldsymbol{n}_p,
\boldsymbol{\zeta}_h\!\cdot\!\boldsymbol{n}_p\Big\rangle_{\Gamma}
+\Big\langle \phi_h^{n+1},\boldsymbol{\zeta}_h\!\cdot\!\boldsymbol{n}_p\Big\rangle_{\Gamma}
\\
&\quad
+\frac{1}{L}\Big\langle\phi_h^{n+1},\psi_h\Big\rangle_{\Gamma}
-\Big\langle \boldsymbol{\xi}_h^{n+1}\!\cdot\!\boldsymbol{n}_p,\psi_h\Big\rangle_{\Gamma}
\\
&=
\gamma\Big\langle\boldsymbol{P}_p\boldsymbol{u}_h^{\star,n+1},
\boldsymbol{P}_p\boldsymbol{\zeta}_h\Big\rangle_{\Gamma}
+\Big\langle
\boldsymbol{\xi}_h^{\star,n+1}\!\cdot\!\boldsymbol{n}_p,
\boldsymbol{\zeta}_h\!\cdot\!\boldsymbol{n}_p\Big\rangle_{\Gamma}
+\Big\langle
-\boldsymbol{u}_h^{\star,n+1}\!\cdot\!\boldsymbol{n}_p
+\tfrac{1}{L}\phi_h^{\star,n+1},
\psi_h\Big\rangle_{\Gamma}.
\end{aligned}
\end{equation}

\end{enumerate}

\begin{remark} For the first time step, the BDF2-AB2 scheme is initialized by a second-order accurate starting procedure. In the numerical experiments, this
is done by prescribing the exact solution at $t_0$ and $t_1$ when a manufactured solution is available.
\end{remark}

\begin{remark}
For the displacement update, enforcing the kinematic relation implicitly within the Biot subproblem is not the only possible choice. One option is to impose it weakly through the auxiliary term
\[
\left(\boldsymbol{\xi}_h^{n+1}, \boldsymbol{v}_{p,h}\right)_p
-
\left(\delta_t \boldsymbol{\eta}_h^{n+1}, \boldsymbol{v}_{p,h}\right)_p,
\]
where the displacement and structure velocity are linked by the BDF2 kinematic relation, as in \eqref{eq:xi_equals_bdf2_eta}.
Alternatively, after computing $\boldsymbol{\xi}_h^{n+1}$, one may update the displacement by the trapezoidal rule,
\[
\boldsymbol{\eta}_h^{n+1}
=
\boldsymbol{\eta}_h^n
+\frac{\Delta t}{2}
\left(\boldsymbol{\xi}_h^{n+1}+\boldsymbol{\xi}_h^n\right).
\]
Both approaches are formally second-order accurate in time, so neither has an inherent advantage purely at the level of consistency order.
However, for strongly coupled fluid-structure interactions, the trapezoidal rule is typically less dissipative, whereas BDF2 is more dissipative and often more robust. Hence, for stiff poroelastic type problems, the BDF2 approach may be a safer choice from a computational viewpoint.
\end{remark}

\section{Stability analysis}
\label{sec:stability}

In this section we derive a discrete stability bound for the fully discrete
second-order partitioned scheme introduced in
Section~\ref{subsec:scheme_bdf2}. Since the domains are fixed, all norms and
inner products are taken on the fixed domains $\Omega_f$,
$\Omega_p$, and the interface $\Gamma$.
In addition, for simplicity of presentation, we suppress external forcing terms in the
stability proof. Their contribution can be incorporated by standard
Cauchy-Schwarz and Young inequalities.

\subsection{Preliminaries}
\label{subsec:stability_preliminaries}

We first recall the BDF2 identity: for any inner product space with inner
product $(\cdot,\cdot)$ and induced norm $\|\cdot\|$,
\begin{equation}\label{eq:bdf2_identity}
2(3a-4b+c,a)
=
\|a\|^2+\|2a-b\|^2-\|b\|^2-\|2b-c\|^2+\|a-2b+c\|^2.
\end{equation}
Since,
$$
\delta_t a^{n+1}=\frac{3 a^{n+1}-4 a^n+a^{n-1}}{2 \Delta t},
$$
we have
\begin{equation}
\begin{aligned}
\left(\delta_t a^{n+1},a^{n+1}\right)
=
\frac{1}{4\Delta t}
\Big(
&\|a^{n+1}\|^2+\|2a^{n+1}-a^n\|^2
-\|a^n\|^2-\|2a^n-a^{n-1}\|^2\\
&+\|a^{n+1}-2a^n+a^{n-1}\|^2
\Big).\label{eq:bdf2_energy_identity}
\end{aligned}
\end{equation}

Next, we apply the same identity to the elastic bilinear form
\[
a_e(\boldsymbol{\eta},\boldsymbol{\zeta})
:=
2\mu_p\big(\boldsymbol{D}(\boldsymbol{\eta}),\boldsymbol{D}(\boldsymbol{\zeta})\big)_p
+
\lambda_p\big(\nabla\cdot\boldsymbol{\eta},\nabla\cdot\boldsymbol{\zeta}\big)_p.
\]
Since $a_e(\cdot,\cdot)$ is symmetric, it induces the elastic energy seminorm
\[
\|\boldsymbol{\eta}\|_{a_e}^2:=a_e(\boldsymbol{\eta},\boldsymbol{\eta}).
\]
Therefore, viewing $a_e(\cdot,\cdot)$ as the underlying inner product and applying \eqref{eq:bdf2_energy_identity}, we obtain:
\begin{align}\label{eq:bdf2_elastic_identity}
a_e(\boldsymbol{\eta}^{n+1},\delta_t\boldsymbol{\eta}^{n+1})
=&\frac{1}{2 \Delta t} a_e\left(\eta^{n+1}, 3 \eta^{n+1}-4 \eta^n+\eta^{n-1}\right)\nonumber \\
=&\frac{1}{4\Delta t}
\Big(
\|\boldsymbol{\eta}^{n+1}\|_{a_e}^2
+\|2\boldsymbol{\eta}^{n+1}-\boldsymbol{\eta}^{n}\|_{a_e}^2
-\|\boldsymbol{\eta}^{n}\|_{a_e}^2 \nonumber \\
&-\|2\boldsymbol{\eta}^{n}-\boldsymbol{\eta}^{n-1}\|_{a_e}^2
+\|\boldsymbol{\eta}^{n+1}-2\boldsymbol{\eta}^{n}+\boldsymbol{\eta}^{n-1}\|_{a_e}^2
\Big).
\end{align}

For the explicit interface terms, we also need a bound on the AB2
extrapolation $a^{\star,n+1}=2a^n-a^{n-1}$ in terms of $a^n$ and
$2a^n-a^{n-1}$. Writing $a^{\star,n+1}=a^n+(a^n-a^{n-1})$ and
$a^n-a^{n-1}=(2a^n-a^{n-1})-a^n$, two applications of
$|x+y|^2\le 2|x|^2+2|y|^2$ give
\[
\|a^{\star,n+1}\|^2
\le
2\|a^n\|^2+2\|a^n-a^{n-1}\|^2
\le
2\|a^n\|^2+4\|2a^n-a^{n-1}\|^2+4\|a^n\|^2,
\]
and hence
\begin{equation}\label{eq:ab2_bound}
\|a^{\star,n+1}\|^2
\le
C\Big(\|a^n\|^2+\|2a^n-a^{n-1}\|^2\Big).
\end{equation}
Thus the extrapolated quantity is controlled by solution values at the two
previous time levels; this estimate will be used later to control the
explicit AB2 interface traces through the augmented interface BDF2 energies.

For later use and motivated by the BDF2 identities \eqref{eq:bdf2_energy_identity} and
\eqref{eq:bdf2_elastic_identity}, we introduce the discrete BDF2 energies associated with the
fluid velocity, structure velocity, pore pressure, and elastic displacement, respectively: 
\begin{align}
E_u^{n+1}
&:=
\frac{\rho_f}{4}
\Big(
\|\boldsymbol{u}_h^{n+1}\|_{L^2 (\Omega_f)}^2
+
\|2\boldsymbol{u}_h^{n+1}-\boldsymbol{u}_h^n\|_{L^2 (\Omega_f)}^2
\Big),
\label{eq:def_Ef}
\\
E_\xi^{n+1}
&:=
\frac{\rho_p}{4}
\Big(
\|\boldsymbol{\xi}_h^{n+1}\|_{L^2 (\Omega_p)}^2
+
\|2\boldsymbol{\xi}_h^{n+1}-\boldsymbol{\xi}_h^n\|_{L^2 (\Omega_p)}^2
\Big),
\label{eq:def_Exi}
\\
E_\phi^{n+1}
&:=
\frac{C_0}{4}
\Big(
\|\phi_h^{n+1}\|_{L^2 (\Omega_p)}^2
+
\|2\phi_h^{n+1}-\phi_h^n\|_{L^2 (\Omega_p)}^2
\Big),
\label{eq:def_Ephi}
\\
E_\eta^{n+1}
&:=
\frac{1}{4}
\Big(
\|\boldsymbol{\eta}_h^{n+1}\|_{a_e}^2
+
\|2\boldsymbol{\eta}_h^{n+1}-\boldsymbol{\eta}_h^n\|_{a_e}^2
\Big).
\label{eq:def_Eeta}
\end{align}
In particular, these discrete energies are defined with the factor $\frac{1}{4}$, which arises from the BDF2 identity.

We also define the total bulk BDF2 energy
\begin{equation}\label{eq:def_total_energy}
\mathcal E^{n+1}:=E_u^{n+1}+E_\xi^{n+1}+E_\phi^{n+1}+E_\eta^{n+1}.
\end{equation}
To control the explicit interface terms, we introduce the current-step
interface energy as follows:
\begin{align}
\mathcal D_\Gamma^{n+1}
&:=
\gamma\|\boldsymbol{P}_f\boldsymbol{u}_h^{n+1}\|_{L^2(\Gamma)}^2
+\gamma\|\boldsymbol{P}_p\boldsymbol{\xi}_h^{n+1}\|_{L^2(\Gamma)}^2
\notag\\
&\quad
+L\|\boldsymbol{u}_h^{n+1}\cdot\boldsymbol{n}_f\|_{L^2(\Gamma)}^2
+\|\boldsymbol{\xi}_h^{n+1}\cdot\boldsymbol{n}_p\|_{L^2(\Gamma)}^2
+\frac1L\|\phi_h^{n+1}\|_{L^2(\Gamma)}^2 .
\label{eq:def_DGamma}
\end{align}
We also introduce the total bulk BDF2 dissipation
\begin{align}
\mathcal D_{\mathrm{bulk}}^{n+1}
&:=
\frac{\rho_f}{4\Delta t}
\|\boldsymbol{u}_h^{n+1}-2\boldsymbol{u}_h^n+\boldsymbol{u}_h^{n-1}\|_{L^2(\Omega_f)}^2
+\frac{\rho_p}{4\Delta t}
\|\boldsymbol{\xi}_h^{n+1}-2\boldsymbol{\xi}_h^n+\boldsymbol{\xi}_h^{n-1}\|_{L^2(\Omega_p)}^2
\notag\\
&\quad
+\frac{C_0}{4\Delta t}
\|\phi_h^{n+1}-2\phi_h^n+\phi_h^{n-1}\|_{L^2(\Omega_p)}^2
+\frac{1}{4\Delta t}
\|\boldsymbol{\eta}_h^{n+1}-2\boldsymbol{\eta}_h^n+\boldsymbol{\eta}_h^{n-1}\|_{a_e}^2
\notag\\
&\quad
+2\mu_f\|\boldsymbol{D}(\boldsymbol{u}_h^{n+1})\|_{L^2(\Omega_f)}^2
+\|K^{1/2}\nabla\phi_h^{n+1}\|_{L^2(\Omega_p)}^2.
\label{eq:def_bulk_diss_final}
\end{align}

For the fluid velocity space, we use the discrete trace inequality
\begin{equation}\label{eq:disc_trace_fluid}
\| \boldsymbol v_h \|_{L^2(\Gamma)}^2
\le
C_{\mathrm{tr}}
\Big(
h^{-1}\|\boldsymbol v_h\|_{L^2(\Omega_f)}^2
+
h\|\boldsymbol D(\boldsymbol v_h)\|_{L^2(\Omega_f)}^2
\Big),
\qquad
\forall \boldsymbol v_h\in \boldsymbol V_{f,h},
\end{equation}
so part of that interface term can be later absorbed into the $\mu_f\left\|\boldsymbol{D}\left(\boldsymbol{u}_h^{n+1}\right)\right\|_{L^2(\Omega_f)}^2$ term.
While, for the poroelastic variables, we use the discrete trace-inverse inequality:
\begin{equation}\label{eq:disc_trace_inverse_poro}
\| w_h \|_{L^2(\Gamma)}^2
\le
C_{\Gamma} h^{-1}\|w_h\|_{L^2(\Omega_p)}^2,
\qquad
\forall w_h \in W_{p,h},
\end{equation}
where \(W_{p,h}\) denotes either the discrete scalar or discrete vector-valued
poroelastic finite element space. Here, the constant $C_{\Gamma}>0$ is independent of $h$.

\begin{theorem}[Closed stability under a parabolic CFL condition]
\label{thm:stability_cfl}
There exist constants \(\Delta t_0>0\), \(c_\ast>0\), and \(C_T>0\),
independent of \(h\) and \(\Delta t\), such that if
\[
\Delta t \le \min\{\Delta t_0,\, c_\ast h^2\},
\]
the fully discrete BDF2-AB2 partitioned scheme satisfies
\[
\mathcal E^{m+1}
+\frac{\Delta t}{2}\sum_{n=1}^{m}\mathcal D_{\mathrm{bulk}}^{n+1}
\le
C_T\,\mathcal E^1,
\qquad m\ge 1.
\]
The constants \(\Delta t_0\) and \(c_\ast\) are made explicit in the proof.
\end{theorem}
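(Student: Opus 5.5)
We test the discrete Stokes subproblem \eqref{eq:fluid-discrete-bdf2-fixed} with $(\boldsymbol v_h,q_h)=(\boldsymbol u_h^{n+1},p_{f,h}^{n+1})$. We test the Biot subproblem \eqref{eq:poro-discrete-bdf2-fixed} with $\boldsymbol\zeta_h=\boldsymbol\xi_h^{n+1}$, $\psi_h=\phi_h^{n+1}$, and $\boldsymbol v_{p,h}=0$; since $\boldsymbol\xi_h^{n+1}=\delta_t\boldsymbol\eta_h^{n+1}$, the term $a_e(\boldsymbol\eta_h^{n+1},\boldsymbol\xi_h^{n+1})$ can be handled by \eqref{eq:bdf2_elastic_identity}. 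The pressure terms cancel within the Stokes equation. The Biot coupling terms $-\alpha(\phi_h^{n+1},\nabla\cdot\boldsymbol\xi_h^{n+1})$ and $+\alpha(\nabla\cdot\boldsymbol\xi_h^{n+1},\phi_h^{n+1})$ also cancel, as do the interface terms $\langle\phi_h^{n+1},\boldsymbol\xi_h^{n+1}\cdot\boldsymbol n_p\rangle_\Gamma-\langle\boldsymbol\xi_h^{n+1}\cdot\boldsymbol n_p,\phi_h^{n+1}\rangle_\Gamma$. Applying \eqref{eq:bdf2_energy_identity} to the three time-derivative terms and adding the two equations, we expect the identity
\[
\frac{1}{\Delta t}\big(\mathcal E^{n+1}-\mathcal E^{n}\big)+\mathcal D_{\mathrm{bulk}}^{n+1}+\mathcal D_\Gamma^{n+1}=\mathcal R^{n+1},
\]
where $\mathcal R^{n+1}$ collects the explicit right-hand sides. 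These are the interface pairings of $\boldsymbol u_h^{n+1}$, $\boldsymbol\xi_h^{n+1}$, $\phi_h^{n+1}$ against the AB2 extrapolants $\boldsymbol\xi_h^{\star,n+1}$, $\boldsymbol u_h^{\star,n+1}$, $\phi_h^{\star,n+1}$.

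The next step is to bound $\mathcal R^{n+1}$. Each pairing is a weighted product such as $\gamma\langle\boldsymbol P_f\boldsymbol\xi^{\star},\boldsymbol P_f\boldsymbol u^{n+1}\rangle$, $L\langle \boldsymbol u^\star\cdot\boldsymbol n_f,\boldsymbol u^{n+1}\cdot\boldsymbol n_f\rangle$, $-\langle\phi^\star,\boldsymbol u^{n+1}\cdot\boldsymbol n_f\rangle$, and so on, and the weights match those in $\mathcal D_\Gamma$. We apply Cauchy--Schwarz and Young with parameter $1/2$ and absorb $\tfrac12\mathcal D_\Gamma^{n+1}$ into the left-hand side. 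This leaves $C\big(\|\cdot^{\star,n+1}\|_{L^2(\Gamma)}^2\big)$ for the extrapolated traces. The cross terms, for example $\phi^\star$ against $\boldsymbol u\cdot\boldsymbol n_f$ and $\boldsymbol u^\star$ against $\phi$, carry mismatched weights $1$ versus $L$ and $1/L$. They are still controlled by Young's inequality with constants depending on $L$ and $\gamma$, which is allowed.

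The main obstacle is controlling these extrapolated trace norms by bulk quantities. Here we use \eqref{eq:ab2_bound} for the traces to reduce to $\|a^n\|_{L^2(\Gamma)}^2+\|2a^n-a^{n-1}\|_{L^2(\Gamma)}^2$. We then apply the trace-inverse inequalities: \eqref{eq:disc_trace_fluid} for $\boldsymbol u_h$ and \eqref{eq:disc_trace_inverse_poro} for $\boldsymbol\xi_h,\phi_h$. This gives
\[
\Delta t\,\mathcal R^{n+1}\le \tfrac{\Delta t}{2}\mathcal D_\Gamma^{n+1}+C\frac{\Delta t}{h}\,\mathcal E^{n}+C\Delta t\, h\big(\|\boldsymbol D(\boldsymbol u_h^n)\|^2+\|\boldsymbol D(2\boldsymbol u_h^n-\boldsymbol u_h^{n-1})\|^2\big).
\]
The last term is handled by $\|\boldsymbol D(2\boldsymbol u^n-\boldsymbol u^{n-1})\|^2\le 8\|\boldsymbol D\boldsymbol u^n\|^2+2\|\boldsymbol D\boldsymbol u^{n-1}\|^2$. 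Its contribution is absorbed into $\tfrac12$ of the previous steps' viscous dissipation $2\mu_f\|\boldsymbol D(\boldsymbol u_h^{n})\|^2$ after summation, provided $Ch\le \mu_f/2$, i.e. $h\le h_0$. Since $\Delta t\le c_\ast h^2$ forces $h\ge(\Delta t/c_\ast)^{1/2}$, the restriction $\Delta t\le\Delta t_0$ can be used to keep everything consistent; alternatively, one uses the weaker form $h\|\boldsymbol D\boldsymbol u\|^2\lesssim h^{-1}\|\boldsymbol u\|^2$ for $h$ large. The $h^{-1}$ factor is the reason for the CFL condition. Under $\Delta t\le c_\ast h^2$ we get $\Delta t/h\le c_\ast h\le C\sqrt{c_\ast\Delta t}$, or at worst a bounded coefficient $\le C\Delta t$-type Gr\"onwall factor. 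The starting-level terms ($n=1$, involving $\boldsymbol u_h^0$) are bounded by $\mathcal E^1$, since $\mathcal E^1$ contains both $\|a^1\|$ and $\|2a^1-a^0\|$.

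Finally, we sum from $n=1$ to $m$. The inequality takes the form $\mathcal E^{m+1}+\tfrac{\Delta t}{2}\sum\mathcal D_{\mathrm{bulk}}^{n+1}\le\mathcal E^1+C_1\Delta t\sum_{n=1}^{m}\mathcal E^n$, where the growth coefficient $C\Delta t/h\le C c_\ast h\Delta t/h^2$ is turned into $C_1\Delta t$ by the parabolic CFL, with $c_\ast$ chosen explicitly. The discrete Gr\"onwall lemma, valid for $C_1\Delta t_0<1$, then yields the stated bound with $C_T=e^{C_1T}$. The points to verify carefully are that every explicit coefficient multiplies only energies at levels $n,n-1$, never $n+1$, so no implicit smallness is needed, and that the $h$-powers indeed close under $\Delta t\lesssim h^2$.
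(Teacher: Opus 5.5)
There is a genuine gap. Your energy identity is correct and matches the paper's \eqref{eq:stability_sum_final}; testing with $\boldsymbol v_{p,h}=0$ instead of $\delta_t\boldsymbol\eta_h^{n+1}$ is harmless, since the weak kinematic terms vanish under \eqref{eq:xi_equals_bdf2_eta}. The failure is in how you treat the explicit interface terms.

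\textbf{Why your bound cannot close uniformly in $h$.} After Young's inequality with $O(1)$ weights you are left with $C\|a_h^{\star,n+1}\|_{L^2(\Gamma)}^2$. The inverse trace inequality then puts $Ch^{-1}\mathcal E^n$ on the right of the rate inequality, i.e.\ $\mathcal E^{n+1}\le(1+C\Delta t/h)\,\mathcal E^n$. Iterating over $T/\Delta t$ steps gives the factor $(1+C\Delta t/h)^{T/\Delta t}\approx e^{CT/h}$, which blows up as $h\to0$.
\begin{itemize}
\item The parabolic CFL cannot convert $C\Delta t/h$ into $C_1\Delta t$ with $C_1$ independent of $h$. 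The condition $\Delta t\le c_\ast h^2$ bounds $\Delta t$ from above, whereas you would need a lower bound on $h$.
\item Your chain $\Delta t/h\le c_\ast h\le C\sqrt{c_\ast\Delta t}$ has its last inequality reversed, since the CFL gives $h\ge\sqrt{\Delta t/c_\ast}$. Even an $O(\sqrt{\Delta t})$ increment per step would not sum to something bounded over $T/\Delta t$ steps.
\item For $\boldsymbol\xi_h$ the $h^{-1}$ loss is unavoidable on your route, because the energy contains no $H^1$ norm of $\boldsymbol\xi_h$.
\item You also cannot telescope $\|\boldsymbol\xi_h^{\star,n+1}\cdot\boldsymbol n_p\|_{L^2(\Gamma)}^2$ against the unit-weight terms $\|\boldsymbol\xi_h^{k}\cdot\boldsymbol n_p\|_{L^2(\Gamma)}^2$ in $\mathcal D_\Gamma^k$. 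AB2 amplifies: $\|2a^n-a^{n-1}\|^2\le 6\|a^n\|^2+3\|a^{n-1}\|^2$ is sharp, so after Young each level needs weight $\epsilon/2+9/(2\epsilon)\ge 3$, while only weight $1$ is available.
\item Secondary issues: absorbing $Ch\|\boldsymbol D(\boldsymbol u_h^n)\|^2$ requires an extra condition $h\le h_0$ that is not in the statement. At $n=1$, the terms $\|\boldsymbol D(\boldsymbol u_h^1)\|$ and $\|\boldsymbol D(\boldsymbol u_h^0)\|$ are not controlled by $\mathcal E^1$.
\end{itemize}

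\textbf{The missing idea.} Use the AB2 residual identity \eqref{eq:ab2_residual_identity}: write $a_h^{\star,n+1}=a_h^{n+1}-r_a^{n+1}$ with $r_a^{n+1}=a_h^{n+1}-2a_h^n+a_h^{n-1}$.
\begin{itemize}
\item The current-level part $\mathcal I_{\mathrm{cur}}^{n+1}$ is dominated by $\mathcal D_\Gamma^{n+1}$ with no loss. The cross terms between $\phi_h^{n+1}$ and $\boldsymbol u_h^{n+1}$ cancel because $\boldsymbol n_p=-\boldsymbol n_f$. The remaining normal and pressure terms coincide with those in $\mathcal D_\Gamma^{n+1}$. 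The tangential term satisfies $2\gamma\langle\boldsymbol P_f\boldsymbol\xi_h^{n+1},\boldsymbol P_f\boldsymbol u_h^{n+1}\rangle_\Gamma\le\gamma\|\boldsymbol P_f\boldsymbol u_h^{n+1}\|_{L^2(\Gamma)}^2+\gamma\|\boldsymbol P_p\boldsymbol\xi_h^{n+1}\|_{L^2(\Gamma)}^2$.
\item What remains, $\mathcal R_\Gamma^{n+1}$, is linear in the second differences. These are absorbed by the BDF2 numerical dissipation $\frac{\rho}{4\Delta t}\|r^{n+1}\|^2$ contained in $\mathcal D_{\mathrm{bulk}}^{n+1}$.
\item The $1/\Delta t$ weight of that dissipation supplies exactly the factor you are missing. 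Young gives $\frac{\rho}{32\Delta t}\|r^{n+1}\|^2+C\frac{\Delta t}{h^2}\|a_h^{n+1}\|^2$, so the per-step growth is $C\Delta t^2/h^2\le Cc_\ast\Delta t$. This is precisely where the parabolic CFL enters, and it yields an $h$-independent Gr\"onwall constant.
\item $\Delta t_0$ is needed only to absorb the $C_1\Delta t\|\boldsymbol D(\boldsymbol u_h^{n+1})\|^2$ contributions into the viscous dissipation.
\end{itemize}
Without this current-level/second-difference splitting, your argument does not close.
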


\begin{proof}
We consider the fully discrete fixed-domain scheme
\eqref{eq:fluid-discrete-bdf2-fixed}-\eqref{eq:poro-discrete-bdf2-fixed},
with the BDF2 kinematic relation \eqref{eq:xi_equals_bdf2_eta} and the discrete AB2 predictors $a_h^{\star,n+1}=2a_h^n-a_h^{n-1}$.

For the Stokes subproblem, we test \eqref{eq:fluid-discrete-bdf2-fixed} with $\boldsymbol{v}_h=\boldsymbol{u}_h^{n+1}$ and
$q_h=p_{f,h}^{n+1}$. By the cancellation of the pressure divergence terms, we obtain:
\begin{align}
&\rho_f\big(\delta_t\boldsymbol{u}_h^{n+1},\boldsymbol{u}_h^{n+1}\big)_f
+2\mu_f\|\boldsymbol{D}(\boldsymbol{u}_h^{n+1})\|_{L^2 (\Omega_f)}^2
+\gamma\|\boldsymbol{P}_f\boldsymbol{u}_h^{n+1}\|_{L^2 (\Gamma)}^2
+L\|\boldsymbol{u}_h^{n+1}\cdot\boldsymbol{n}_f\|_{L^2 (\Gamma)}^2
\notag\\
&\qquad
=
\gamma\big\langle
\boldsymbol{P}_f\boldsymbol{\xi}_h^{\star,n+1},
\boldsymbol{P}_f\boldsymbol{u}_h^{n+1}
\big\rangle_\Gamma
+\big\langle
L(\boldsymbol{u}_h^{\star,n+1}\cdot\boldsymbol{n}_f)-\phi_h^{\star,n+1},
\boldsymbol{u}_h^{n+1}\cdot\boldsymbol{n}_f
\big\rangle_\Gamma.
\label{eq:stability_fluid_test}
\end{align}
Applying \eqref{eq:bdf2_energy_identity} to $\rho_f\left(\delta_t \boldsymbol{u}_h^{n+1}, \boldsymbol{u}_h^{n+1}\right)_f$ term, yields: 
\begin{align}
&\frac{1}{\Delta t}\big(E_u^{n+1}-E_u^n\big)
+\frac{\rho_f}{4\Delta t}
\|\boldsymbol{u}_h^{n+1}-2\boldsymbol{u}_h^n+\boldsymbol{u}_h^{n-1}\|_{L^2 (\Omega_f)}^2
\notag\\
&\qquad
+2\mu_f\|\boldsymbol{D}(\boldsymbol{u}_h^{n+1})\|_{L^2 (\Omega_f)}^2
+\gamma\|\boldsymbol{P}_f\boldsymbol{u}_h^{n+1}\|_{L^2 (\Gamma)}^2
+L\|\boldsymbol{u}_h^{n+1}\cdot\boldsymbol{n}_f\|_{L^2 (\Gamma)}^2
\notag\\
&=
\gamma\big\langle
\boldsymbol{P}_f\boldsymbol{\xi}_h^{\star,n+1},
\boldsymbol{P}_f\boldsymbol{u}_h^{n+1}
\big\rangle_\Gamma
+\big\langle
L(\boldsymbol{u}_h^{\star,n+1}\cdot\boldsymbol{n}_f)-\phi_h^{\star,n+1},
\boldsymbol{u}_h^{n+1}\cdot\boldsymbol{n}_f
\big\rangle_\Gamma.
\label{eq:stability_fluid_energy}
\end{align}

Next, we test the Biot subproblem \eqref{eq:poro-discrete-bdf2-fixed} with the choice of $\boldsymbol{v}_{p,h}=\delta_t\boldsymbol{\eta}_h^{n+1}$, $\boldsymbol{\zeta}_h=\boldsymbol{\xi}_h^{n+1}$, $
\psi_h=\phi_h^{n+1}
$. Since $
\delta_t\boldsymbol{\eta}_h^{n+1}=\boldsymbol{\xi}_h^{n+1}$, the Biot coupling terms canceled:
$$
-\alpha\left(\phi_h^{n+1}, \nabla \cdot \boldsymbol{\xi}_h^{n+1}\right)_p+\alpha\left(\nabla \cdot \boldsymbol{\xi}_h^{n+1}, \phi_h^{n+1}\right)_p=0,
$$
and the following interface kinetic terms canceled:
$$
\left\langle\phi_h^{n+1}, \boldsymbol{\xi}_h^{n+1} \cdot \boldsymbol{n}_p\right\rangle_{\Gamma}-\left\langle\boldsymbol{\xi}_h^{n+1} \cdot \boldsymbol{n}_p, \phi_h^{n+1}\right\rangle_{\Gamma}=0.
$$
Thus, we obtain:
\begin{align}
\label{eq:stability_biot_test}
&\quad\rho_p\big(\delta_t\boldsymbol{\xi}_h^{n+1},\boldsymbol{\xi}_h^{n+1}\big)_p
+a_e(\boldsymbol{\eta}_h^{n+1},\boldsymbol{\xi}_h^{n+1})
+C_0\big(\delta_t\phi_h^{n+1},\phi_h^{n+1}\big)_p
+\|K^{1/2}\nabla\phi_h^{n+1}\|_{L^2 (\Omega_p)}^2
\nonumber\\
&\quad
+\gamma\|\boldsymbol{P}_p\boldsymbol{\xi}_h^{n+1}\|_{L^2 (\Gamma)}^2
+\|\boldsymbol{\xi}_h^{n+1}\cdot\boldsymbol{n}_p\|_{L^2 (\Gamma)}^2
+\frac1L\|\phi_h^{n+1}\|_{L^2 (\Gamma)}^2
\nonumber\\
\qquad&=
\gamma\big\langle
\boldsymbol{P}_p\boldsymbol{u}_h^{\star,n+1},
\boldsymbol{P}_p\boldsymbol{\xi}_h^{n+1}
\big\rangle_\Gamma
+\big\langle
\boldsymbol{\xi}_h^{\star,n+1}\cdot\boldsymbol{n}_p,
\boldsymbol{\xi}_h^{n+1}\cdot\boldsymbol{n}_p
\big\rangle_\Gamma\nonumber
\nonumber\\
&\quad+\big\langle
-\boldsymbol{u}_h^{\star,n+1}\cdot\boldsymbol{n}_p+\tfrac1L\phi_h^{\star,n+1},
\phi_h^{n+1}
\big\rangle_\Gamma.
\end{align}
Using \eqref{eq:bdf2_energy_identity} for
$\rho_p\left(\delta_t \boldsymbol{\xi}_h^{n+1}, \boldsymbol{\xi}_h^{n+1}\right)_p$ and $C_0\left(\delta_t \phi_h^{n+1}, \phi_h^{n+1}\right)_p$, and
convert the elastic term 
$a_e(\boldsymbol{\eta}_h^{n+1},\delta_t\boldsymbol{\eta}_h^{n+1})$ into elastic displacement energy using BDF2 kinematics, we obtain:
\begin{align}
&\qquad\frac{1}{\Delta t}(E_\xi^{n+1}-E_\xi^n)
+\frac{\rho_p}{4\Delta t}
\|\boldsymbol{\xi}_h^{n+1}-2\boldsymbol{\xi}_h^n+\boldsymbol{\xi}_h^{n-1}\|_{L^2 (\Omega_p)}^2
\notag\\
&\quad
+\frac{1}{\Delta t}(E_\phi^{n+1}-E_\phi^n)
+\frac{C_0}{4\Delta t}
\|\phi_h^{n+1}-2\phi_h^n+\phi_h^{n-1}\|_{L^2 (\Omega_p)}^2
\notag\\
&\quad
+\frac{1}{\Delta t}(E_\eta^{n+1}-E_\eta^n)
+\frac{1}{4\Delta t}
\|\boldsymbol{\eta}_h^{n+1}-2\boldsymbol{\eta}_h^n+\boldsymbol{\eta}_h^{n-1}\|_{a_e}^2
\notag\\
&\quad
+\|K^{1/2}\nabla\phi_h^{n+1}\|_{L^2 (\Omega_p)}^2
+\gamma\|\boldsymbol{P}_p\boldsymbol{\xi}_h^{n+1}\|_{L^2 (\Gamma)}^2
+\|\boldsymbol{\xi}_h^{n+1}\cdot\boldsymbol{n}_p\|_{L^2 (\Gamma)}^2
+\frac1L\|\phi_h^{n+1}\|_{L^2 (\Gamma)}^2
\notag\\
\quad&=
\gamma\big\langle
\boldsymbol{P}_p\boldsymbol{u}_h^{\star,n+1},
\boldsymbol{P}_p\boldsymbol{\xi}_h^{n+1}
\big\rangle_\Gamma
+\big\langle
\boldsymbol{\xi}_h^{\star,n+1}\cdot\boldsymbol{n}_p,
\boldsymbol{\xi}_h^{n+1}\cdot\boldsymbol{n}_p
\big\rangle_\Gamma
\notag\\
&\quad+\big\langle
-\boldsymbol{u}_h^{\star,n+1}\cdot\boldsymbol{n}_p+\tfrac1L\phi_h^{\star,n+1},
\phi_h^{n+1}
\big\rangle_\Gamma.
\label{eq:stability_biot_energy}
\end{align}
Now, we sum the fluid and Biot contributions, namely \eqref{eq:stability_fluid_energy} and \eqref{eq:stability_biot_energy}, we obtain:
\begin{align}
&\frac{1}{\Delta t}\big(\mathcal E^{n+1}-\mathcal E^n\big)
+\mathcal D_{\mathrm{bulk}}^{n+1}
+\mathcal D_\Gamma^{n+1}
=
\mathcal I^{n+1},
\label{eq:stability_sum_final}
\end{align}
where \(\mathcal D_{\mathrm{bulk}}^{n+1}\) and \(\mathcal D_\Gamma^{n+1}\) are
defined in \eqref{eq:def_bulk_diss_final} and \eqref{eq:def_DGamma}, and
$\mathcal{I}^{n+1}$ corresponds to the sum of all the extrapolation interface terms:
\begin{align}
\mathcal{I}^{n+1}
&:=
\gamma\big\langle
\boldsymbol{P}_f\boldsymbol{\xi}_h^{\star,n+1},
\boldsymbol{P}_f\boldsymbol{u}_h^{n+1}
\big\rangle_\Gamma
+\big\langle
L(\boldsymbol{u}_h^{\star,n+1}\cdot\boldsymbol{n}_f)-\phi_h^{\star,n+1},
\boldsymbol{u}_h^{n+1}\cdot\boldsymbol{n}_f
\big\rangle_\Gamma
\notag\\
&\quad
+\gamma\big\langle
\boldsymbol{P}_p\boldsymbol{u}_h^{\star,n+1},
\boldsymbol{P}_p\boldsymbol{\xi}_h^{n+1}
\big\rangle_\Gamma
+\big\langle
\boldsymbol{\xi}_h^{\star,n+1}\cdot\boldsymbol{n}_p,
\boldsymbol{\xi}_h^{n+1}\cdot\boldsymbol{n}_p
\big\rangle_\Gamma
\notag\\
&\quad
+\big\langle
-\boldsymbol{u}_h^{\star,n+1}\cdot\boldsymbol{n}_p+\tfrac1L\phi_h^{\star,n+1},
\phi_h^{n+1}
\big\rangle_\Gamma .
\label{eq:def_I}
\end{align}

We now derive a closed stability estimate by exploiting the
exact AB2 residual identity:
\begin{equation}\label{eq:ab2_residual_identity}
a_h^{\star,n+1}-a_h^{n+1}
=
-\big(a_h^{n+1}-2a_h^n+a_h^{n-1}\big).
\end{equation}

By the uniform positive definiteness of $K$ assumed in
\eqref{eq:K_coercive_intro}, the same coercivity holds in particular on the
discrete poroelastic pressure space $Q_{p,h}\subset H^1(\Omega_p)$:
\begin{equation}\label{eq:K_coercive}
(K\nabla \psi_h,\nabla \psi_h)_p \ge k_0 \|\nabla \psi_h\|_{L^2(\Omega_p)}^2
\qquad
\forall \psi_h\in Q_{p,h}.
\end{equation}

Next, by defining the residual variables:
\begin{equation}\label{eq:second_differences}
\boldsymbol r_u^{n+1}:=\boldsymbol u_h^{n+1}-2\boldsymbol u_h^n+\boldsymbol u_h^{n-1},
\quad
\boldsymbol r_\xi^{n+1}:=\boldsymbol \xi_h^{n+1}-2\boldsymbol \xi_h^n+\boldsymbol \xi_h^{n-1},
\quad
r_\phi^{n+1}:=\phi_h^{n+1}-2\phi_h^n+\phi_h^{n-1},
\end{equation}
and from \eqref{AB2}, we have:
\begin{equation}\label{eq:ab2_residual_variables}
\boldsymbol u_h^{\star,n+1}=\boldsymbol u_h^{n+1}-\boldsymbol r_u^{n+1},
\quad
\boldsymbol \xi_h^{\star,n+1}=\boldsymbol \xi_h^{n+1}-\boldsymbol r_\xi^{n+1},
\quad
\phi_h^{\star,n+1}=\phi_h^{n+1}-r_\phi^{n+1}.
\end{equation}
Substituting \eqref{eq:ab2_residual_variables} into \(\mathcal I^{n+1}\), we obtain:
$$
\mathcal I^{n+1}=\mathcal I_{\mathrm{cur}}^{n+1}+\mathcal R_\Gamma^{n+1},
$$
where $\mathcal I_{\mathrm{cur}}^{n+1}$ denotes the current-step contribution:
\begin{align}
\mathcal I_{\mathrm{cur}}^{n+1}
&=
\gamma\left\langle \boldsymbol P_f\boldsymbol \xi_h^{n+1},\boldsymbol P_f\boldsymbol u_h^{n+1}\right\rangle_\Gamma
+\left\langle L(\boldsymbol u_h^{n+1}\cdot \boldsymbol n_f)-\phi_h^{n+1},
\boldsymbol u_h^{n+1}\cdot \boldsymbol n_f\right\rangle_\Gamma
\notag\\
&\quad
+\gamma\left\langle \boldsymbol P_p\boldsymbol u_h^{n+1},\boldsymbol P_p\boldsymbol \xi_h^{n+1}\right\rangle_\Gamma
+\left\langle \boldsymbol \xi_h^{n+1}\cdot \boldsymbol n_p,
\boldsymbol \xi_h^{n+1}\cdot \boldsymbol n_p\right\rangle_\Gamma
\notag\\
&\quad
+\left\langle -\boldsymbol u_h^{n+1}\cdot \boldsymbol n_p+\frac1L\phi_h^{n+1},
\phi_h^{n+1}\right\rangle_\Gamma,
\label{eq:Icur_alt}
\end{align}
and $\mathcal R_\Gamma^{n+1}$ denotes the residual part:
\begin{align}
\mathcal R_\Gamma^{n+1}
&=
-\gamma\left\langle \boldsymbol P_f\boldsymbol r_\xi^{n+1},\boldsymbol P_f\boldsymbol u_h^{n+1}\right\rangle_\Gamma
-\gamma\left\langle \boldsymbol P_p\boldsymbol r_u^{n+1},\boldsymbol P_p\boldsymbol \xi_h^{n+1}\right\rangle_\Gamma
\notag\\
&\quad
-L\left\langle \boldsymbol r_u^{n+1}\cdot \boldsymbol n_f,\boldsymbol u_h^{n+1}\cdot \boldsymbol n_f\right\rangle_\Gamma
+\left\langle r_\phi^{n+1},\boldsymbol u_h^{n+1}\cdot \boldsymbol n_f\right\rangle_\Gamma
\notag\\
&\quad
-\left\langle \boldsymbol r_\xi^{n+1}\cdot \boldsymbol n_p,\boldsymbol \xi_h^{n+1}\cdot \boldsymbol n_p\right\rangle_\Gamma
+\left\langle \boldsymbol r_u^{n+1}\cdot \boldsymbol n_p,\phi_h^{n+1}\right\rangle_\Gamma
-\frac1L\left\langle r_\phi^{n+1},\phi_h^{n+1}\right\rangle_\Gamma.
\label{eq:def_residual_alt}
\end{align}
Since we have normal vector
$\boldsymbol n_p=-\boldsymbol n_f$ and tangential projection operator satisfies $\boldsymbol P_f=\boldsymbol P_p$ on the interface $\Gamma$, we derive:
\begin{align}
\mathcal I_{\mathrm{cur}}^{n+1}
&=
2\gamma\left\langle \boldsymbol P_f\boldsymbol \xi_h^{n+1},\boldsymbol P_f\boldsymbol u_h^{n+1}\right\rangle_\Gamma
+
L\|\boldsymbol u_h^{n+1}\cdot \boldsymbol n_f\|_{L^2(\Gamma)}^2
+\|\boldsymbol \xi_h^{n+1}\cdot \boldsymbol n_p\|_{L^2(\Gamma)}^2
+\frac1L\|\phi_h^{n+1}\|_{L^2(\Gamma)}^2.
\label{eq:Icur_simplified_alt}
\end{align}
Based on $2\langle a,b\rangle\le \|a\|^2+\|b\|^2$ and \eqref{eq:def_DGamma}, we have:
\begin{equation}\label{eq:Icur_bound_alt}
\mathcal I_{\mathrm{cur}}^{n+1}
\le
\mathcal D_\Gamma^{n+1}.
\end{equation}
Hence, from \eqref{eq:stability_sum_final}, we obtain:
\begin{equation}\label{eq:energy_residual_identity_alt}
\frac{1}{\Delta t}\big(\mathcal E^{n+1}-\mathcal E^n\big)
+\mathcal D_{\mathrm{bulk}}^{n+1}
\le
\mathcal R_\Gamma^{n+1}.
\end{equation}

Next, we estimate each term in the residual \(\mathcal R_\Gamma^{n+1}\) by using Cauchy-Schwarz,
the discrete trace inequalities \eqref{eq:disc_trace_fluid}-\eqref{eq:disc_trace_inverse_poro},
and Young's inequality.
For the first tangential residual term, we have:
\begin{align}
\gamma\big|\left\langle \boldsymbol P_f\boldsymbol r_\xi^{n+1},\boldsymbol P_f\boldsymbol u_h^{n+1}\right\rangle_\Gamma\big|
&\le
\gamma
\|\boldsymbol P_f\boldsymbol r_\xi^{n+1}\|_{L^2(\Gamma)}
\|\boldsymbol P_f\boldsymbol u_h^{n+1}\|_{L^2(\Gamma)}
\notag\\
&\le
C\gamma h^{-1/2}\|\boldsymbol r_\xi^{n+1}\|_{L^2(\Omega_p)}
\Big(
h^{-1/2}\|\boldsymbol u_h^{n+1}\|_{L^2(\Omega_f)}
+
h^{1/2}\|\boldsymbol D(\boldsymbol u_h^{n+1})\|_{L^2(\Omega_f)}
\Big)
\notag\\
&=C \gamma h^{-1}\left\|\boldsymbol{r}_{\xi}^{n+1}\right\|_{L^2(\Omega_p)}
\left\|\boldsymbol{u}_h^{n+1}\right\|_{L^2(\Omega_f)}
+C \gamma\left\|\boldsymbol{r}_{\xi}^{n+1}\right\|_{L^2(\Omega_p)}
\left\|\boldsymbol{D}\left(\boldsymbol{u}_h^{n+1}\right)\right\|_{L^2(\Omega_f)} \notag\\
&\le
\frac{\rho_p}{32\Delta t}\|\boldsymbol r_\xi^{n+1}\|_{L^2(\Omega_p)}^2
+C\frac{\gamma^2\Delta t}{\rho_p h^2}\|\boldsymbol u_h^{n+1}\|_{L^2(\Omega_f)}^2
+C\frac{\gamma^2\Delta t}{\rho_p}\|\boldsymbol D(\boldsymbol u_h^{n+1})\|_{L^2(\Omega_f)}^2
\notag\\
&\le
\frac{\rho_p}{32\Delta t}\|\boldsymbol r_\xi^{n+1}\|_{L^2(\Omega_p)}^2
+C\frac{\gamma^2\Delta t}{\rho_p h^2}\|\boldsymbol u_h^{n+1}\|_{L^2(\Omega_f)}^2
+C\frac{\gamma^2\Delta t}{\rho_p}\|\boldsymbol D(\boldsymbol u_h^{n+1})\|_{L^2(\Omega_f)}^2
\notag\\
&\quad+\frac{\mu_f}{8}\|\boldsymbol D(\boldsymbol u_h^{n+1})\|_{L^2(\Omega_f)}^2.
\label{eq:R1_alt}
\end{align}
Similarly, we have:
\begin{align}
\gamma\big|\left\langle \boldsymbol P_p\boldsymbol r_u^{n+1},\boldsymbol P_p\boldsymbol \xi_h^{n+1}\right\rangle_\Gamma\big|
&\le
\frac{\rho_f}{32\Delta t}\|\boldsymbol r_u^{n+1}\|_{L^2(\Omega_f)}^2
+
C\frac{\gamma^2\Delta t}{\rho_f h^2}\|\boldsymbol \xi_h^{n+1}\|_{L^2(\Omega_p)}^2,
\label{eq:R2_alt}
\\
L\big|\left\langle \boldsymbol r_u^{n+1}\cdot \boldsymbol n_f,\boldsymbol u_h^{n+1}\cdot \boldsymbol n_f\right\rangle_\Gamma\big|
&\le
\frac{\rho_f}{32\Delta t}\|\boldsymbol r_u^{n+1}\|_{L^2(\Omega_f)}^2
+\frac{\mu_f}{8}\|\boldsymbol D(\boldsymbol u_h^{n+1})\|_{L^2(\Omega_f)}^2
\notag\\
&\quad
+C\frac{L^2\Delta t}{\rho_f h^2}\|\boldsymbol u_h^{n+1}\|_{L^2(\Omega_f)}^2
+C\frac{L^2\Delta t}{\rho_f}\|\boldsymbol D(\boldsymbol u_h^{n+1})\|_{L^2(\Omega_f)}^2,
\label{eq:R3_alt}
\\
\big|\left\langle r_\phi^{n+1},\boldsymbol u_h^{n+1}\cdot \boldsymbol n_f\right\rangle_\Gamma\big|
&\le
\frac{C_0}{32\Delta t}\|r_\phi^{n+1}\|_{L^2(\Omega_p)}^2
+\frac{\mu_f}{8}\|\boldsymbol D(\boldsymbol u_h^{n+1})\|_{L^2(\Omega_f)}^2
\notag\\
&\quad
+C\frac{\Delta t}{C_0 h^2}\|\boldsymbol u_h^{n+1}\|_{L^2(\Omega_f)}^2
+C\frac{\Delta t}{C_0}\|\boldsymbol D(\boldsymbol u_h^{n+1})\|_{L^2(\Omega_f)}^2,
\label{eq:R4_alt}
\\
\big|\left\langle \boldsymbol r_\xi^{n+1}\cdot \boldsymbol n_p,\boldsymbol \xi_h^{n+1}\cdot \boldsymbol n_p\right\rangle_\Gamma\big|
&\le
\frac{\rho_p}{32\Delta t}\|\boldsymbol r_\xi^{n+1}\|_{L^2(\Omega_p)}^2
+
C\frac{\Delta t}{\rho_p h^2}\|\boldsymbol \xi_h^{n+1}\|_{L^2(\Omega_p)}^2,
\label{eq:R5_alt}
\\
\big|\left\langle \boldsymbol r_u^{n+1}\cdot \boldsymbol n_p,\phi_h^{n+1}\right\rangle_\Gamma\big|
&\le
\frac{\rho_f}{32\Delta t}\|\boldsymbol r_u^{n+1}\|_{L^2(\Omega_f)}^2
+
C\frac{\Delta t}{\rho_f h^2}\|\phi_h^{n+1}\|_{L^2(\Omega_p)}^2
+
\frac{k_0}{8}\|\nabla \phi_h^{n+1}\|_{L^2(\Omega_p)}^2,
\label{eq:R6_alt}
\\
\frac1L\big|\left\langle r_\phi^{n+1},\phi_h^{n+1}\right\rangle_\Gamma\big|
&\le
\frac{C_0}{32\Delta t}\|r_\phi^{n+1}\|_{L^2(\Omega_p)}^2
+
C\frac{\Delta t}{C_0L^2 h^2}\|\phi_h^{n+1}\|_{L^2(\Omega_p)}^2
+
\frac{k_0}{8}\|\nabla \phi_h^{n+1}\|_{L^2(\Omega_p)}^2,
\label{eq:R7_alt}
\end{align}
where, all those fractions, such as $1/32$ and $1/8$ are chosen to ensure later that those terms can be absorbed into the left-hand side.

Combining \eqref{eq:R1_alt}-\eqref{eq:R7_alt}, and using
\eqref{eq:K_coercive} together with the definition of \(\mathcal E^{n+1}\),
we obtain:
\begin{align}
\mathcal R_\Gamma^{n+1}
&\le
\frac{1}{4}\mathcal D_{\mathrm{bulk}}^{n+1}
+
C_1\Delta t\,\|\boldsymbol D(\boldsymbol u_h^{n+1})\|_{L^2(\Omega_f)}^2
+
C_2\frac{\Delta t}{h^2}\,\mathcal E^{n+1},
\label{eq:R_total_alt}
\end{align}
where \(C_1,C_2>0\) depend only on the physical/interface parameters and the trace constants, but are independent of \(h\) and \(\Delta t\).

To bound the lower-order term $C_1 \Delta t\,\|\boldsymbol D(\boldsymbol u_h^{n+1})\|_{L^2(\Omega_f)}^2$
on the right-hand side of \eqref{eq:R_total_alt}, we absorb it into the viscous contribution
$2\mu_f\|\boldsymbol D(\boldsymbol u_h^{n+1})\|_{L^2(\Omega_f)}^2$, contained in the bulk dissipation \(\mathcal D_{\mathrm{bulk}}^{n+1}\).
Here $C_1>0$ collects all residual contributions proportional to \(\Delta t\,\|\boldsymbol D(\boldsymbol u_h^{n+1})\|_{L^2(\Omega_f)}^2\).
In particular, up to a generic multiplicative constant independent of \(h\) and \(\Delta t\), we may take:
\begin{equation}\label{eq:def_C1}
C_1
=
C\left(
\frac{\gamma^2}{\rho_p}
+
\frac{L^2}{\rho_f}
+
\frac{1}{C_0}
\right).
\end{equation}
These terms arise from the residual estimates
\eqref{eq:R1_alt}, \eqref{eq:R3_alt}, and \eqref{eq:R4_alt}.

Next, we choose \(\Delta t_0>0\) such that $C_1\Delta t_0\le \frac{\mu_f}{2}$.
Namely,
\begin{equation}\label{eq:def_dt0}
\Delta t_0
:=
\min\left\{1,\frac{\mu_f}{2C_1}\right\}.
\end{equation}
Then, for every \(\Delta t\le \Delta t_0\),
\begin{equation}\label{eq:small_dt_absorb_corrected}
C_1\Delta t\,\|\boldsymbol D(\boldsymbol u_h^{n+1})\|_{L^2(\Omega_f)}^2
\le
\frac{\mu_f}{2}\|\boldsymbol D(\boldsymbol u_h^{n+1})\|_{L^2(\Omega_f)}^2
\le
\frac14\,\mathcal D_{\mathrm{bulk}}^{n+1}.
\end{equation}

Next, \(C_2>0\) denotes a constant collecting all contributions in
\eqref{eq:R1_alt}--\eqref{eq:R7_alt} that are bounded by
\[
\frac{\Delta t}{h^2}\,\mathcal E^{n+1}.
\]
Here we use that the current \(L^2\)-terms
\(\|\boldsymbol u_h^{n+1}\|_{L^2(\Omega_f)}^2\),
\(\|\boldsymbol\xi_h^{n+1}\|_{L^2(\Omega_p)}^2\),
\(\|\phi_h^{n+1}\|_{L^2(\Omega_p)}^2\)
are controlled by the BDF2 energy \(\mathcal E^{n+1}\).
More precisely, after using the definition of \(\mathcal E^{n+1}\), one may
take
\begin{equation}\label{eq:def_C2}
C_2
=
C\left(
\frac{\gamma^2}{\rho_f\rho_p}
+\frac{L^2}{\rho_f^2}
+\frac{1}{\rho_p^2}
+\frac{1}{\rho_f C_0}
+\frac{1}{C_0^2L^2}
\right),
\end{equation}
again up to a generic constant \(C>0\) independent of \(h\) and \(\Delta t\).

Substituting \eqref{eq:R_total_alt} and
\eqref{eq:small_dt_absorb_corrected} into
\eqref{eq:energy_residual_identity_alt}, we obtain
\begin{equation}\label{eq:one_step_cfl_pre_corrected}
\frac{1}{\Delta t}\big(\mathcal E^{n+1}-\mathcal E^n\big)
+\frac12\,\mathcal D_{\mathrm{bulk}}^{n+1}
\le
C_2\frac{\Delta t}{h^2}\,\mathcal E^{n+1}.
\end{equation}
Multiplying by $\Delta t$ gives
\begin{equation}\label{eq:one_step_cfl_corrected}
\mathcal E^{n+1}-\mathcal E^n
+\frac{\Delta t}{2}\mathcal D_{\mathrm{bulk}}^{n+1}
\le
C_2\frac{\Delta t^2}{h^2}\,\mathcal E^{n+1}.
\end{equation}

Now choose $c_\ast>0$ such that
\begin{equation}\label{eq:def_cstar}
C_2 c_\ast \le \frac14.
\end{equation}
Then whenever
${\Delta t} \le c_\ast {h^2}$, we have:
$$
C_2\frac{\Delta t^2}{h^2}\le C_2 c_\ast \Delta t \le \frac{\Delta t}{4},
$$
and therefore \eqref{eq:one_step_cfl_corrected} yields the following inequality:
\begin{equation}\label{eq:one_step_cfl_rearranged}
\Bigl(1-\frac{\Delta t}{4}\Bigr)\mathcal E^{n+1}
+\frac{\Delta t}{2}\mathcal D_{\mathrm{bulk}}^{n+1}
\le
\mathcal E^n.
\end{equation}

Since when $\Delta t\le \Delta t_0\le 1$, we have:
$$
\Bigl(1-\frac{\Delta t}{4}\Bigr)^{-1}\le 1+\frac{\Delta t}{2},
$$
by dropping the nonnegative dissipation term in
\eqref{eq:one_step_cfl_rearranged}, we obtain:
\begin{equation}\label{eq:energy_growth_step}
\mathcal E^{n+1}
\le
\Bigl(1+\frac{\Delta t}{2}\Bigr)\mathcal E^n.
\end{equation}
Iterating the above inequality gives:
$$
\mathcal E^{m+1}
\le
\Bigl(1+\frac{\Delta t}{2}\Bigr)^m \mathcal E^1
\le
\exp\!\left(\frac{m\Delta t}{2}\right)\mathcal E^1
\le
e^{T/2}\mathcal E^1.
$$

Finally, summing \eqref{eq:one_step_cfl_corrected} from $n=1$ to $m$, we obtain
\begin{equation}\label{eq:sum_one_step_cfl}
\mathcal E^{m+1}-\mathcal E^1
+\frac{\Delta t}{2}\sum_{n=1}^{m}\mathcal D_{\mathrm{bulk}}^{n+1}
\le
C_2\frac{\Delta t^2}{h^2}\sum_{n=1}^{m}\mathcal E^{n+1}.
\end{equation}
Given the bound
$\mathcal E^{n+1}\le e^{T/2}\mathcal E^1$, we can derive:
$$
C_2\frac{\Delta t^2}{h^2}\sum_{n=1}^{m}\mathcal E^{n+1}
\le
C_2 c_\ast \Delta t\sum_{n=1}^{m}\mathcal E^{n+1}
\le
\frac14\,\Delta t\sum_{n=1}^{m}\mathcal E^{n+1}
\le
\frac{T}{4}e^{T/2}\mathcal E^1.
$$
Therefore
$$
\mathcal E^{m+1}
+\frac{\Delta t}{2}\sum_{n=1}^{m}\mathcal D_{\mathrm{bulk}}^{n+1}
\le
\left(1+\frac{T}{4}e^{T/2}\right)\mathcal E^1
\le
C_T\,\mathcal E^1,
\qquad
m\ge 1.
$$
where
\begin{equation}\label{eq:def_CT_final}
C_T:=1+\frac{T}{4}e^{T/2}.
\end{equation}
\end{proof}

\begin{remark}
\label{rem:cfl_stability_corrected}
The constant \(C_1\) collects the coefficients of the terms
\(\Delta t\|\boldsymbol D(\boldsymbol u_h^{n+1})\|_{L^2(\Omega_f)}^2\)
arising in the residual estimates, while \(C_2\) collects the coefficients of
the terms \(\frac{\Delta t}{h^2}\mathcal E^{n+1}\). The threshold
\(\Delta t_0\) controls the absorption of lower-order viscous contributions,
whereas the condition \(\Delta t\le c_\ast h^2\) is a
CFL-type restriction needed to control the explicit AB2 interface treatment.
\end{remark}

\section{Error estimate}
\label{sec:error}

In this section we derive an a priori error estimate for the fully discrete
fixed-domain BDF2-AB2 partitioned scheme
\eqref{eq:fluid-discrete-bdf2-fixed}-\eqref{eq:poro-discrete-bdf2-fixed}.
The proof is based on the CFL stability estimate of
Theorem~\ref{thm:stability_cfl}, together with suitable projection operators,
local time-discretization defect bounds in \(L^2\)-in-time form, and a
discrete Gronwall argument. In particular, the discrete trace inequalities and the parabolic CFL condition are assumed to hold.

\subsection{Regularity assumptions and projection operators}
\label{subsec:error_assumptions}

Let
$(\boldsymbol u,p_f,\boldsymbol\eta,\boldsymbol\xi,\phi)$ 
be the exact solution of
\eqref{eq:stokes-biot}-\eqref{eq:interface_conditions}, and let
$
(\boldsymbol u_h^n,p_{f,h}^n,\boldsymbol\eta_h^n,\boldsymbol\xi_h^n,\phi_h^n)
$
be the discrete solution generated by
\eqref{eq:fluid-discrete-bdf2-fixed}-\eqref{eq:poro-discrete-bdf2-fixed}.
For \(n\ge0\), we define:
\[
\boldsymbol u^n:=\boldsymbol u(t_n),\quad
p_f^n:=p_f(t_n),\quad
\boldsymbol\eta^n:=\boldsymbol\eta(t_n),\quad
\boldsymbol\xi^n:=\boldsymbol\xi(t_n),\quad
\phi^n:=\phi(t_n).
\]

We assume the following regularity of the exact solution:
\begin{align}
\boldsymbol u
&\in W^{2,\infty}(0,T;H^{k+1}(\Omega_f)^d)\cap H^3(0,T;L^2(\Omega_f)^d),
\label{eq:reg_u_error}
\\
p_f
&\in L^\infty(0,T;H^k(\Omega_f)),
\label{eq:reg_p_error}
\\
\boldsymbol\eta
&\in W^{2,\infty}(0,T;H^{k+1}(\Omega_p)^d)\cap H^3(0,T;H^{k+1}(\Omega_p)^d),
\label{eq:reg_eta_error}
\\
\boldsymbol\xi=\partial_t\boldsymbol\eta
&\in W^{2,\infty}(0,T;H^{k+1}(\Omega_p)^d)\cap H^3(0,T;L^2(\Omega_p)^d),
\label{eq:reg_xi_error}
\\
\phi
&\in W^{2,\infty}(0,T;H^{k+1}(\Omega_p))\cap H^3(0,T;L^2(\Omega_p)).
\label{eq:reg_phi_error}
\end{align}
In particular, we assume $H^3$ in time for corresponding variables is to obtain the second order in time estimate.

We assume that the Stokes finite element pair $(\boldsymbol V_{f,h},Q_{f,h})$
is a stable mixed pair, i.e.\ there exists a constant $\beta_0>0$,
independent of $h$, such that the discrete inf-sup (Ladyzhenskaya--Babu\v{s}ka--Brezzi) condition
\begin{equation}\label{eq:discrete_infsup}
\sup_{\boldsymbol v_h\in \boldsymbol V_{f,h}\setminus\{0\}}
\frac{(q_h,\nabla\!\cdot\!\boldsymbol v_h)_f}{\|\boldsymbol v_h\|_{H^1(\Omega_f)}}
\ge
\beta_0\|q_h\|_{L^2(\Omega_f)}
\qquad
\forall q_h\in Q_{f,h}
\end{equation}
holds. For velocity spaces with homogeneous Dirichlet data only on
\(\Sigma_{D,f}\) and free or interface traces on \(\Gamma\), we assume the
corresponding full-\(L^2\) inf-sup condition. This assumption is compatible
with standard Stokes-stable pairs, such as Taylor--Hood
\(\mathbb P_k/\mathbb P_{k-1}\) elements, under the usual shape-regularity
and boundary-condition hypotheses. The Fortin operator used below is assumed
to satisfy the stated divergence-orthogonality and approximation properties.

{\textbf{Fluid projection.}}
To eliminate the divergence residual in the fluid error equation, we assume that the
Stokes finite element pair $(\boldsymbol V_{f,h},Q_{f,h})$ admits a Fortin operator
\(\Pi_u:\boldsymbol V_f\to \boldsymbol V_{f,h}\) such that
\begin{equation}
(\nabla\!\cdot(\boldsymbol v-\Pi_u\boldsymbol v),q_h)_f=0
\qquad \forall q_h\in Q_{f,h},
\label{eq:Fortin_property}
\end{equation}
and
\begin{equation}
\|\Pi_u \boldsymbol v\|_{L^2(\Omega_f)}
\le C\|\boldsymbol v\|_{L^2(\Omega_f)}
\qquad \forall \boldsymbol v\in \boldsymbol V_f.
\label{eq:Fortin_L2_stability}
\end{equation}
In addition, we assume that \(\Pi_u\) satisfies the approximation and trace estimates:
\begin{align}
\left\|\boldsymbol{v}-\Pi_u \boldsymbol{v}\right\|_{L^2\left(\Omega_f\right)}+h\left\|\boldsymbol{v}-\Pi_u \boldsymbol{v}\right\|_{H^1\left(\Omega_f\right)} &\leq C h^{k+1}\|\boldsymbol{v}\|_{H^{k+1}\left(\Omega_f\right)},
\label{eq:Fortin_approx}
\\
\left\|\boldsymbol{v}-\Pi_u \boldsymbol{v}\right\|_{L^2(\Gamma)} 
&\leq C h^{k+\frac{1}{2}}\|\boldsymbol{v}\|_{H^{k+1}\left(\Omega_f\right)}.
\label{eq:Fortin_trace_approx}
\end{align}
\textcolor{black}{
We refer to \cite{Brezzi-Falk:1991,Girault-Scott:2003,Falk:2008,Diening-Storn-Tscherpel:2022} for existence of \(\Pi_u\) for the family of Taylor-Hood elements.}

Next, we define the fluid pressure projection $\Pi_p$ as the $L^2$ projection onto the discrete pressure space. Let \(\Pi_p:Q_f\to Q_{f,h}\) denote the \(L^2\)-projection:
\begin{equation}
(q-\Pi_p q,\chi_h)_f=0
\qquad \forall \chi_h\in Q_{f,h},
\label{eq:pressure_projection}
\end{equation}
with the corresponding approximation estimate
\begin{equation}
\left\|q-\Pi_p q\right\|_{L^2\left(\Omega_f\right)} \leq C h^k\|q\|_{H^k\left(\Omega_f\right)}
\label{eq:proj_p_error}
\end{equation}
We also assume the discrete Korn inequality on \(\boldsymbol V_{f,h}\) \cite{Brenner-Scott:book}. Namely,
there exists a constant $C_K>0$, independent of \(h\), such that the $H^1$ norm is controlled by the $L^2$ norm of symmetric gradient
\begin{equation}
\left\|\boldsymbol{v}_h\right\|_{H^1\left(\Omega_f\right)} \leq C_K\left\|\boldsymbol{D}\left(\boldsymbol{v}_h\right)\right\|_{L^2\left(\Omega_f\right)} \quad \forall \boldsymbol{v}_h \in \boldsymbol{V}_{f, h}.
\label{eq:discrete_korn}
\end{equation}
Therefore we also have:
\begin{equation}
\left\|\nabla \cdot \boldsymbol{v}_h\right\|_{L^2\left(\Omega_f\right)} \leq C\left\|\boldsymbol{D}\left(\boldsymbol{v}_h\right)\right\|_{L^2\left(\Omega_f\right)} \quad \forall \boldsymbol{v}_h \in \boldsymbol{V}_{f, h}.
\label{eq:div_from_korn}
\end{equation}

{\textbf{Poroelastic projections.}}
For the displacement $\boldsymbol{\eta}$, we define the elasticity Ritz projection
\(\Pi_\eta:\boldsymbol V_p\to \boldsymbol V_{p,h}\) by \cite{Wang2026ExplicitSplitting}
\begin{equation}
a_e(\boldsymbol\eta-\Pi_\eta\boldsymbol\eta,\boldsymbol\zeta_h)=0
\qquad \forall \boldsymbol\zeta_h\in \boldsymbol V_{p,h}.
\label{eq:eta_projection}
\end{equation}
Next, for the displacement velocity $\boldsymbol{\xi}$, we define the $H^1$-stable
projection \(\Pi_\xi:\boldsymbol V_p\to \boldsymbol V_{p,h}\) by 
\begin{equation}
(\nabla(\boldsymbol\xi-\Pi_\xi\boldsymbol\xi),\nabla\boldsymbol\zeta_h)_{\Omega_p}
+(\boldsymbol\xi-\Pi_\xi\boldsymbol\xi,\boldsymbol\zeta_h)_{\Omega_p}=0
\qquad \forall \boldsymbol\zeta_h\in \boldsymbol V_{p,h}.
\label{eq:xi_projection}
\end{equation}
For the error analysis below, it is not necessary to identify $\Pi_{\eta}$ and $\Pi_{\xi}$; the kinematic mismatch generated by using the projections together with the BDF2 difference is measured by the defect $\boldsymbol\kappa^{n+1}$ defined in \eqref{eq:kappa_def}.
Regarding the pore pressure $\phi$, we impose homogeneous Dirichlet conditions on the outer poroelastic boundary $
\Sigma_{D,p}$, but not on the interface $\Gamma$. We define the Darcy Ritz projection
\(\Pi_{\phi}:Q_p\to Q_{p,h}\) by
\begin{equation}
\left(K \nabla\left(\phi-\Pi_{\phi} \phi\right), \nabla \psi_h\right)_p+\frac{1}{L}\left\langle\phi-\Pi_{\phi} \phi, \psi_h\right\rangle_{\Gamma}=0,
\qquad \forall \psi_h\in Q_{p,h}.
\label{eq:phi_projection}
\end{equation}

We assume that the projections $\Pi_\eta$, $\Pi_\xi$, and $\Pi_{\phi}$ satisfy the standard approximation estimates:
\begin{align}
\left\|\boldsymbol \eta-\Pi_\eta\boldsymbol \eta\right\|_{L^2\left(\Omega_p\right)}
+h\left\|\boldsymbol \eta-\Pi_\eta\boldsymbol \eta\right\|_{H^1\left(\Omega_p\right)}
&\le Ch^{k+1}\|\boldsymbol \eta\|_{H^{k+1}\left(\Omega_p\right)},
\label{eq:proj_eta}
\\
\left\|\boldsymbol \xi-\Pi_\xi\boldsymbol \xi\right\|_{L^2\left(\Omega_p\right)}
+h\left\|\boldsymbol \xi-\Pi_\xi\boldsymbol \xi\right\|_{H^1\left(\Omega_p\right)}
&\le Ch^{k+1}\|\boldsymbol \xi\|_{H^{k+1}\left(\Omega_p\right)},
\label{eq:proj_xi}
\\
\left\|\phi-\Pi_{\phi}\phi\right\|_{L^2\left(\Omega_p\right)}
+h\left\|\phi-\Pi_{\phi}\phi\right\|_{H^1\left(\Omega_p\right)}
&\le C h^{k+1}\|\phi\|_{H^{k+1}\left(\Omega_p\right)}.
\label{eq:proj_phi}
\end{align}
Moreover, the corresponding trace estimates hold:
\begin{align}
\left\|\boldsymbol \eta-\Pi_\eta\boldsymbol \eta\right\|_{L^2(\Gamma)}
&\le Ch^{k+\frac12}\|\boldsymbol \eta\|_{H^{k+1}\left(\Omega_p\right)},
\label{eq:trace_proj_eta}
\\
\left\|\boldsymbol \xi-\Pi_\xi\boldsymbol \xi\right\|_{L^2(\Gamma)}
&\le Ch^{k+\frac12}\|\boldsymbol \xi\|_{H^{k+1}\left(\Omega_p\right)},
\label{eq:trace_proj_xi}
\\
\left\|\phi-\Pi_{\phi}\phi\right\|_{L^2(\Gamma)}
&\le Ch^{k+\frac12}\|\phi\|_{H^{k+1}\left(\Omega_p\right)},
\label{eq:trace_proj_phi}
\end{align}
in which the half-order loss comes from taking traces onto $\Gamma$.

\subsection{Error decomposition}
\label{subsec:error_decomp}

Since the projection operators are linear and time-independent, they commute with the discrete BDF2 operator, and we have:
\begin{align}
\delta_t \Pi_u\boldsymbol u^{n+1}=\Pi_u(\delta_t\boldsymbol u^{n+1}), \qquad 
\delta_t \Pi_\eta\boldsymbol\eta^{n+1}=\Pi_\eta(\delta_t\boldsymbol\eta^{n+1}),\nonumber \\
\delta_t \Pi_\xi\boldsymbol\xi^{n+1}=\Pi_\xi(\delta_t\boldsymbol\xi^{n+1}), \qquad
\delta_t \Pi_{\phi}\phi^{n+1}=\Pi_{\phi}(\delta_t\phi^{n+1}). \nonumber
\end{align}

Next, we split each total error into a projection error denoted by the superscript $I$ and a discrete error denoted by the superscript $h$:
\begin{align}
\boldsymbol e_u^{n}
:=\boldsymbol u^n-\boldsymbol u_h^n
&=(\boldsymbol u^n-\Pi_u\boldsymbol u^n)+(\Pi_u\boldsymbol u^n-\boldsymbol u_h^n)
=:-\boldsymbol e_u^{I,n}+\boldsymbol e_u^{h,n},
\label{eq:errsplit_u_new}
\\
e_p^{n}
:=p_f^n-p_{f,h}^n
&=(p_f^n-\Pi_p p_f^n)+(\Pi_p p_f^n-p_{f,h}^n)
=-e_p^{I,n}+e_p^{h,n},
\label{eq:errsplit_p_new}
\\
\boldsymbol e_\eta^{n}
:=\boldsymbol\eta^n-\boldsymbol\eta_h^n
&=(\boldsymbol\eta^n-\Pi_\eta\boldsymbol\eta^n)+(\Pi_\eta\boldsymbol\eta^n-\boldsymbol\eta_h^n)
=:-\boldsymbol e_\eta^{I,n}+\boldsymbol e_\eta^{h,n},
\label{eq:errsplit_eta_new}
\\
\boldsymbol e_\xi^{n}
:=\boldsymbol\xi^n-\boldsymbol\xi_h^n
&=(\boldsymbol\xi^n-\Pi_\xi\boldsymbol\xi^n)+(\Pi_\xi\boldsymbol\xi^n-\boldsymbol\xi_h^n)
=:-\boldsymbol e_\xi^{I,n}+\boldsymbol e_\xi^{h,n},
\label{eq:errsplit_xi_new}
\\
e_\phi^{n}
:=\phi^n-\phi_h^n
&=(\phi^n-\Pi_{\phi}\phi^n)+(\Pi_{\phi}\phi^n-\phi_h^n)
=-e_\phi^{I,n}+e_\phi^{h,n}.
\label{eq:errsplit_phi_new}
\end{align}
Equivalently, the projection errors are
\[
\boldsymbol e_u^{I,n}:=\Pi_u\boldsymbol u^n-\boldsymbol u^n,\quad
 e_p^{I,n}:=\Pi_p p_f^n-p_f^n,\quad
\boldsymbol e_\eta^{I,n}:=\Pi_\eta\boldsymbol\eta^n-\boldsymbol\eta^n,\quad
\boldsymbol e_\xi^{I,n}:=\Pi_\xi\boldsymbol\xi^n-\boldsymbol\xi^n,\quad
 e_\phi^{I,n}:=\Pi_{\phi}\phi^n-\phi^n,
\]
and the discrete errors are
\[
\boldsymbol e_u^{h,n}:=\Pi_u\boldsymbol u^n-\boldsymbol u_h^n,\quad
 e_p^{h,n}:=\Pi_p p_f^n-p_{f,h}^n,\quad
\boldsymbol e_\eta^{h,n}:=\Pi_\eta\boldsymbol\eta^n-\boldsymbol\eta_h^n,\quad
\boldsymbol e_\xi^{h,n}:=\Pi_\xi\boldsymbol\xi^n-\boldsymbol\xi_h^n,\quad
 e_\phi^{h,n}:=\Pi_{\phi}\phi^n-\phi_h^n.
\]

We also define the kinematic projection defect as follows
\begin{equation}
\boldsymbol\kappa^{n+1}:=\Pi_\xi\boldsymbol\xi^{n+1}-\delta_t\Pi_\eta\boldsymbol\eta^{n+1},
\label{eq:kappa_def}
\end{equation}
Although the exact continuum relation satisfies $\boldsymbol{\xi}=\partial_t\boldsymbol{\eta}$, the projected variables do not in general satisfy the same relation after replacing $\partial_t$ by the BDF2 difference.
Since $\boldsymbol\xi_h^{n+1}=\delta_t\boldsymbol\eta_h^{n+1}$, we have:
\begin{equation}
\boldsymbol e_\xi^{h,n+1}
=
\delta_t\boldsymbol e_\eta^{h,n+1}
+\boldsymbol\kappa^{n+1}.
\label{eq:error_kinematic_relation}
\end{equation}

\subsection{Time-discretization and extrapolation defects}
\label{subsec:time_defects}

Since BDF2 and AB2 were used, we will quantify the local defects of these stencils. For \(I_n:=(t_{n-1},t_{n+1})\), we use the following local defect bounds.

\begin{lemma}[BDF2 defect]
\label{lem:bdf2_defect}
Let \(X\) be a Banach space and \(\chi\in H^3(I_n;X)\). Then the local BDF2 truncation error is:
\begin{equation}
\|\delta_t\chi^{n+1}-\partial_t\chi(t_{n+1})\|_X^2
\le
C\Delta t^3\int_{I_n}\|\partial_t^3\chi(s)\|_X^2\,ds.
\label{eq:bdf2_defect_new}
\end{equation}
\end{lemma}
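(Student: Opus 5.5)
The approach is Taylor expansion with integral remainder about $t_{n+1}$, followed by Cauchy--Schwarz in time. Since $\chi\in H^3(I_n;X)$, the Sobolev embedding in one dimension gives $\chi\in C^2(\bar I_n;X)$ with $\partial_t^2\chi$ absolutely continuous. So for $s\in\{t_n,t_{n-1}\}$ we may write, as an identity in $X$ with a Bochner integral remainder,
\[
\chi(s)=\chi(t_{n+1})+(s-t_{n+1})\partial_t\chi(t_{n+1})+\tfrac12(s-t_{n+1})^2\partial_t^2\chi(t_{n+1})+\tfrac12\int_{t_{n+1}}^{s}(s-\tau)^2\partial_t^3\chi(\tau)\,d\tau .
\]

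Next we insert these expansions, with $s-t_{n+1}=-\Delta t$ and $-2\Delta t$, into $\delta_t\chi^{n+1}=\frac{1}{2\Delta t}\big(3\chi^{n+1}-4\chi^n+\chi^{n-1}\big)$ and check that the stencil is consistent to second order:
\begin{itemize}
\item the zeroth-order coefficients give $3-4+1=0$;
\item the first-order coefficients give $\frac{1}{2\Delta t}\big(4\Delta t-2\Delta t\big)=1$, so this term reproduces $\partial_t\chi(t_{n+1})$;
\item the second-order coefficients give $\frac{1}{2\Delta t}\cdot\frac12\big(-4\Delta t^2+4\Delta t^2\big)=0$.
\end{itemize}
Hence the defect is exactly the remainder
\[
\delta_t\chi^{n+1}-\partial_t\chi(t_{n+1})=\frac{1}{4\Delta t}\Big(-4\int_{t_{n+1}}^{t_n}(t_n-\tau)^2\partial_t^3\chi\,d\tau+\int_{t_{n+1}}^{t_{n-1}}(t_{n-1}-\tau)^2\partial_t^3\chi\,d\tau\Big).
\]

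Now we bound each integral. On $I_n$ the kernels satisfy $|s-\tau|^2\le 4\Delta t^2$, so each integral is bounded in $X$ by $C\Delta t^2\int_{I_n}\|\partial_t^3\chi\|_X\,d\tau$. This gives
\[
\|\delta_t\chi^{n+1}-\partial_t\chi(t_{n+1})\|_X\le C\Delta t\int_{I_n}\|\partial_t^3\chi(\tau)\|_X\,d\tau .
\]
Cauchy--Schwarz on the interval $I_n$, whose length is $2\Delta t$, gives $\big(\int_{I_n}\|\partial_t^3\chi\|_X\big)^2\le 2\Delta t\int_{I_n}\|\partial_t^3\chi\|_X^2$. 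Squaring the previous display then yields the claimed bound $C\Delta t^3\int_{I_n}\|\partial_t^3\chi\|_X^2\,ds$.

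The only delicate step is justifying the Taylor formula with integral remainder for Banach-valued $H^3$ functions. We handle this by proving the identity for smooth $X$-valued functions, where it follows from the fundamental theorem of calculus for Bochner integrals. We then extend it by density of $C^\infty(\bar I_n;X)$ in $H^3(I_n;X)$. Both sides of the final inequality are continuous in the $H^3(I_n;X)$ norm: the left side through the embedding $H^3\hookrightarrow C^1$, so point values of $\chi$ and $\partial_t\chi$ converge. Therefore the inequality passes to the limit.
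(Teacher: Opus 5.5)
Your proof is correct and is essentially the paper's argument. The paper invokes the Peano kernel representation of the BDF2 defect, with a kernel bounded by $C\Delta t^2$ on $I_n$; your Taylor expansion with integral remainder about $t_{n+1}$ simply computes that kernel explicitly, and then applies the same $L^1$-in-time bound and Cauchy--Schwarz. Your extra justification of the Taylor formula for $X$-valued $H^3$ functions, which the paper omits, is also sound, though the density step is unnecessary: the identity $u(t)=u(s)+\int_s^t u'(\tau)\,d\tau$ holds directly for $u\in W^{1,1}(I_n;X)$ with $X$ any Banach space.
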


\begin{proof}
Since the BDF2 formula is exact for polynomials of degree at most two, the
Peano kernel representation yields
\[
\delta_t\chi^{n+1}-\partial_t\chi(t_{n+1})
=
\frac{1}{2\Delta t}\int_{I_n}\omega_{n+1}(s)\,\partial_t^3\chi(s)\,ds,
\]
where \(|\omega_{n+1}(s)|\le C\Delta t^2\) for \(s\in I_n\). Hence
\[
\|\delta_t\chi^{n+1}-\partial_t\chi(t_{n+1})\|_X
\le
C\Delta t\int_{I_n}\|\partial_t^3\chi(s)\|_X\,ds.
\]
Applying Cauchy-Schwarz in time gives \eqref{eq:bdf2_defect_new}.
\end{proof}

\begin{lemma}[AB2 defect]
\label{lem:ab2_defect}
Let \(X\) be a Hilbert space and \(\chi\in H^2(I_n;X)\). Then
\begin{equation}
\|\chi^{\star}-\chi(t_{n+1})\|_X^2
\le
C\Delta t^3\int_{I_n}\|\partial_t^2\chi(s)\|_X^2\,ds.
\label{eq:ab2_defect_new}
\end{equation}
\end{lemma}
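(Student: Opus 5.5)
The plan mirrors the proof of Lemma~\ref{lem:bdf2_defect}. Here \(\chi^\star=\chi^{\star,n+1}=2\chi^n-\chi^{n-1}\) as in \eqref{AB2}. First I would rewrite the defect as a pure second difference:
\[
\chi^\star-\chi(t_{n+1})=-\big(\chi(t_{n+1})-2\chi(t_n)+\chi(t_{n-1})\big).
\]
This is exactly the continuous analogue of the residual identity \eqref{eq:ab2_residual_identity}. The second difference annihilates affine functions of \(t\), which reflects the fact that AB2 extrapolation is exact for polynomials of degree at most one. This exactness is what produces the \(\Delta t^3\) scaling.

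Next I would obtain an integral (Peano kernel) representation. Since \(\chi\in H^2(I_n;X)\subset C^1(\bar I_n;X)\), Taylor's formula with integral remainder holds in \(X\)-valued form. Expanding about \(t_n\) gives
\[
\chi(t_{n\pm1})=\chi(t_n)\pm\Delta t\,\partial_t\chi(t_n)+\int_{t_n}^{t_{n\pm1}}(t_{n\pm1}-s)\,\partial_t^2\chi(s)\,ds .
\]
Adding the two expansions cancels the zeroth- and first-order terms. The result is
\[
\chi(t_{n+1})-2\chi(t_n)+\chi(t_{n-1})=\int_{I_n}\omega(s)\,\partial_t^2\chi(s)\,ds,
\qquad \omega(s)=\Delta t-|s-t_n|.
\]
The kernel satisfies \(0\le\omega(s)\le\Delta t\) on \(I_n\). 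To justify the \(X\)-valued Taylor formula rigorously, I would either argue by density of smooth functions in \(H^2(I_n;X)\) or use the fundamental theorem of calculus for absolutely continuous Bochner functions.

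Finally, I would take norms and apply Cauchy--Schwarz in time over the interval \(I_n\), whose length is \(2\Delta t\):
\[
\|\chi^\star-\chi(t_{n+1})\|_X\le \Delta t\int_{I_n}\|\partial_t^2\chi(s)\|_X\,ds\le \Delta t\,(2\Delta t)^{1/2}\Big(\int_{I_n}\|\partial_t^2\chi(s)\|_X^2\,ds\Big)^{1/2}.
\]
Squaring gives \eqref{eq:ab2_defect_new} with \(C=2\).

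None of these steps is a serious obstacle. The only delicate point is the rigorous vector-valued Taylor remainder under minimal \(H^2\) regularity, and I would handle it by the density argument. The Hilbert structure of \(X\) is not actually needed; Bochner integrability in a Banach space suffices.
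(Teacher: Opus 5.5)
Your proof is correct and follows essentially the same route as the paper. The paper also uses the fundamental theorem of calculus twice to write \(\chi^\star-\chi(t_{n+1})=-\int_{I_n}\omega(s)\,\partial_t^2\chi(s)\,ds\), with the same hat kernel \(\omega(s)=\Delta t-|s-t_n|\), then bounds the kernel by \(\Delta t\) and applies Cauchy--Schwarz in time; your version additionally makes the constant (\(C=2\)) and the \(C^1\) regularity justification explicit.
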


\begin{proof}
Using the fundamental theorem of calculus twice,
\[
\chi^{\star}-\chi(t_{n+1})
=
-\int_{t_n}^{t_{n+1}}(t_{n+1}-s)\,\partial_t^2\chi(s)\,ds
-\int_{t_{n-1}}^{t_n}(s-t_{n-1})\,\partial_t^2\chi(s)\,ds.
\]
Therefore
\[
\|\chi^{\star}-\chi(t_{n+1})\|_X
\le
C\Delta t\int_{I_n}\|\partial_t^2\chi(s)\|_X\,ds,
\]
Applying Cauchy-Schwarz yields
\eqref{eq:ab2_defect_new}.
\end{proof}

\begin{lemma}[Kinematic projection defect] \label{lem:kappa_defect} As a consequence of \(\boldsymbol\xi=\partial_t\boldsymbol\eta\), \eqref{eq:proj_eta}--\eqref{eq:proj_xi}, and Lemma~\ref{lem:bdf2_defect}, the kinematic defect satisfies \begin{align} \|\boldsymbol{\kappa}^{n+1}\|_{L^2(\Omega_p)}^2 &\le C h^{2k+2} + C \Delta t^3 \int_{I_n}\|\partial_t^3 \boldsymbol{\eta}(s)\|_{L^2(\Omega_p)}^2\,ds, \label{eq:kappa_bound_L2} \\ \|\boldsymbol{\kappa}^{n+1}\|_{H^1(\Omega_p)}^2 &\le C h^{2k} + C \Delta t^3 \int_{I_n}\|\partial_t^3 \boldsymbol{\eta}(s)\|_{H^1(\Omega_p)}^2\,ds. \label{eq:kappa_bound_H1} \end{align} \end{lemma}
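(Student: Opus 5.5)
The plan is to split the defect into a spatial projection part and a temporal BDF2 part, using $\boldsymbol\xi=\partial_t\boldsymbol\eta$ at $t_{n+1}$. Adding and subtracting $\boldsymbol\xi^{n+1}$ and $\delta_t\boldsymbol\eta^{n+1}$ in \eqref{eq:kappa_def}, and using that $\Pi_\eta$ commutes with $\delta_t$, gives
\begin{equation*}
\boldsymbol\kappa^{n+1}
=
\underbrace{(\Pi_\xi\boldsymbol\xi^{n+1}-\boldsymbol\xi^{n+1})}_{T_1}
+\underbrace{\big(\partial_t\boldsymbol\eta(t_{n+1})-\delta_t\boldsymbol\eta^{n+1}\big)}_{T_2}
+\underbrace{\delta_t\big(\boldsymbol\eta^{n+1}-\Pi_\eta\boldsymbol\eta^{n+1}\big)}_{T_3}.
\end{equation*}
By the triangle inequality, $\|\boldsymbol\kappa^{n+1}\|_X^2\le 3(\|T_1\|_X^2+\|T_2\|_X^2+\|T_3\|_X^2)$ for $X=L^2(\Omega_p)$ and $X=H^1(\Omega_p)$. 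Each term is then bounded separately, and the two estimates differ only in which part of \eqref{eq:proj_eta}--\eqref{eq:proj_xi} is used.

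\textbf{Term $T_1$.} Apply \eqref{eq:proj_xi} pointwise in time. This gives $\|T_1\|_{L^2}^2\le Ch^{2k+2}\|\boldsymbol\xi\|_{L^\infty(0,T;H^{k+1})}^2$ and $\|T_1\|_{H^1}^2\le Ch^{2k}\|\boldsymbol\xi\|_{L^\infty(0,T;H^{k+1})}^2$. These norms are finite by \eqref{eq:reg_xi_error}.

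\textbf{Term $T_2$.} Apply Lemma~\ref{lem:bdf2_defect} with $\chi=\boldsymbol\eta$, first with $X=L^2(\Omega_p)$ and then with $X=H^1(\Omega_p)$. This is admissible since $\boldsymbol\eta\in H^3(0,T;H^{k+1})$ by \eqref{eq:reg_eta_error}. It produces exactly the terms $C\Delta t^3\int_{I_n}\|\partial_t^3\boldsymbol\eta\|_X^2\,ds$ in the claimed bounds.

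\textbf{Term $T_3$ (main obstacle).} The naive bound is $\|T_3\|_X\le\frac{1}{2\Delta t}\big(3\|\rho^{n+1}\|_X+4\|\rho^n\|_X+\|\rho^{n-1}\|_X\big)$ with $\rho:=\boldsymbol\eta-\Pi_\eta\boldsymbol\eta$. It loses a factor $\Delta t^{-1}$ and so is unusable. Instead I exploit that $\Pi_\eta$ is linear and time-independent, so $\rho$ is differentiable in time with $\partial_t\rho=(I-\Pi_\eta)\partial_t\boldsymbol\eta$. Write $\delta_t\rho^{n+1}$ as a weighted average of $\partial_t\rho$ over $I_n$:
\begin{equation*}
\delta_t\rho^{n+1}=\frac{1}{2\Delta t}\Big(3\int_{t_n}^{t_{n+1}}\partial_t\rho\,ds-\int_{t_{n-1}}^{t_n}\partial_t\rho\,ds\Big).
\end{equation*}
This yields $\|T_3\|_X\le C\sup_{s\in I_n}\|(I-\Pi_\eta)\partial_t\boldsymbol\eta(s)\|_X$. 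Applying \eqref{eq:proj_eta} to $\partial_t\boldsymbol\eta$ then gives $\|T_3\|_{L^2}^2\le Ch^{2k+2}\|\partial_t\boldsymbol\eta\|_{L^\infty(0,T;H^{k+1})}^2$ and $\|T_3\|_{H^1}^2\le Ch^{2k}\|\partial_t\boldsymbol\eta\|_{L^\infty(0,T;H^{k+1})}^2$. Both are finite because $\boldsymbol\eta\in W^{2,\infty}(0,T;H^{k+1})$.

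Summing the three bounds in each norm gives \eqref{eq:kappa_bound_L2} and \eqref{eq:kappa_bound_H1}. The constants $C$ absorb the $L^\infty$-in-time norms of $\boldsymbol\xi$ and $\partial_t\boldsymbol\eta$ and are independent of $h$, $\Delta t$ and $n$.
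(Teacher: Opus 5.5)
Your proposal is correct and follows essentially the same route as the paper. You use the same three-term splitting of $\boldsymbol\kappa^{n+1}$, bound the first term with \eqref{eq:proj_xi} and the second with Lemma~\ref{lem:bdf2_defect}, and handle the third term with the same integral representation of the BDF2 difference, reducing it to $\|\partial_t\boldsymbol\eta\|_{L^\infty(I_n;H^{k+1})}$. The only difference is in that last term: the paper applies \eqref{eq:proj_eta} to $\delta_t\boldsymbol\eta^{n+1}$ before using the integral representation, whereas you apply it pointwise to $\partial_t\boldsymbol\eta(s)$ afterwards, which makes no difference.
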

\begin{proof}
Using the definition \eqref{eq:kappa_def}, the exact kinematic relation
\(\boldsymbol\xi=\partial_t\boldsymbol\eta\), and the projected quantities
\(\Pi_\eta\boldsymbol\eta^n\) and \(\Pi_\xi\boldsymbol\xi^n\) introduced in
Section~\ref{subsec:error_decomp}, we write
\begin{align*}
\boldsymbol\kappa^{n+1}
&=
\Pi_\xi\boldsymbol\xi^{n+1}-\delta_t\Pi_\eta\boldsymbol\eta^{n+1} \\
&=
(\Pi_\xi\boldsymbol\xi^{n+1}-\boldsymbol\xi^{n+1})
+
(\boldsymbol\xi^{n+1}-\delta_t\boldsymbol\eta^{n+1})
+
(\delta_t\boldsymbol\eta^{n+1}-\delta_t\Pi_\eta\boldsymbol\eta^{n+1}).
\end{align*}
Therefore, for \(X=L^2(\Omega_p)^d\) or \(X=H^1(\Omega_p)^d\),
\begin{equation}
\|\boldsymbol\kappa^{n+1}\|_X^2
\le
3\Big(
\|\Pi_\xi\boldsymbol\xi^{n+1}-\boldsymbol\xi^{n+1}\|_X^2
+\|\boldsymbol\xi^{n+1}-\delta_t\boldsymbol\eta^{n+1}\|_X^2
+\|\delta_t\boldsymbol\eta^{n+1}-\delta_t\Pi_\eta\boldsymbol\eta^{n+1}\|_X^2
\Big).
\label{eq:kappa_split_bound}
\end{equation}

We estimate the three terms separately. For
\(\Pi_\xi\boldsymbol\xi^{n+1}-\boldsymbol\xi^{n+1}\), the projection estimate
\eqref{eq:proj_xi} yields:
\begin{align}
\|\Pi_\xi\boldsymbol\xi^{n+1}-\boldsymbol\xi^{n+1}\|_{L^2(\Omega_p)}^2
&\le
C h^{2k+2},
\label{eq:kappa_A_L2}
\\
\|\Pi_\xi\boldsymbol\xi^{n+1}-\boldsymbol\xi^{n+1}\|_{H^1(\Omega_p)}^2
&\le
C h^{2k}.
\label{eq:kappa_A_H1}
\end{align}
For the term \(\boldsymbol\xi^{n+1}-\delta_t\boldsymbol\eta^{n+1}\),
by the continuous kinematic relation $\boldsymbol\xi=\partial_t\boldsymbol\eta$,
we have:
$$
\boldsymbol\xi^{n+1}-\delta_t\boldsymbol\eta^{n+1}
=
\partial_t\boldsymbol\eta(t_{n+1})-\delta_t\boldsymbol\eta^{n+1}.
$$
Hence Lemma~\ref{lem:bdf2_defect}, applied with \(\chi=\boldsymbol\eta\), gives
\begin{align}
\|\boldsymbol\xi^{n+1}-\delta_t\boldsymbol\eta^{n+1}\|_{L^2(\Omega_p)}^2
&\le
C\Delta t^3\int_{I_n}
\|\partial_t^3\boldsymbol\eta(s)\|_{L^2(\Omega_p)}^2\,ds,
\label{eq:kappa_B_L2}
\\
\|\boldsymbol\xi^{n+1}-\delta_t\boldsymbol\eta^{n+1}\|_{H^1(\Omega_p)}^2
&\le
C\Delta t^3\int_{I_n}
\|\partial_t^3\boldsymbol\eta(s)\|_{H^1(\Omega_p)}^2\,ds.
\label{eq:kappa_B_H1}
\end{align}
Regarding \(\delta_t\boldsymbol\eta^{n+1}-\delta_t\Pi_\eta\boldsymbol\eta^{n+1}\),
since the projection operators are linear and time-independent, they commute with the BDF2 operator, namely:
\[
\delta_t\Pi_\eta\boldsymbol\eta^{n+1}
=
\Pi_\eta(\delta_t\boldsymbol\eta^{n+1}).
\]
Therefore,
\[
\delta_t\boldsymbol\eta^{n+1}-\delta_t\Pi_\eta\boldsymbol\eta^{n+1}
=
(I-\Pi_\eta)\,\delta_t\boldsymbol\eta^{n+1}.
\]
Applying the approximation property \eqref{eq:proj_eta} to the function
\(\delta_t\boldsymbol\eta^{n+1}\), we obtain
\begin{equation}
\|(I-\Pi_\eta)\,\delta_t\boldsymbol\eta^{n+1}\|_{L^2(\Omega_p)}
+
h\|(I-\Pi_\eta)\,\delta_t\boldsymbol\eta^{n+1}\|_{H^1(\Omega_p)}
\le
Ch^{k+1}\|\delta_t\boldsymbol\eta^{n+1}\|_{H^{k+1}(\Omega_p)}.
\label{eq:kappa_C_projection}
\end{equation}
It remains to bound \(\delta_t\boldsymbol\eta^{n+1}\) in \(H^{k+1}(\Omega_p)\).
Using the BDF2 formula,
\[
\delta_t\boldsymbol\eta^{n+1}
=
\frac{3}{2\Delta t}\big(\boldsymbol\eta^{n+1}-\boldsymbol\eta^n\big)
-\frac{1}{2\Delta t}\big(\boldsymbol\eta^n-\boldsymbol\eta^{n-1}\big),
\]
and
\[
\boldsymbol\eta^{n+1}-\boldsymbol\eta^n
=
\int_{t_n}^{t_{n+1}}\partial_t\boldsymbol\eta(s)\,ds,
\qquad
\boldsymbol\eta^n-\boldsymbol\eta^{n-1}
=
\int_{t_{n-1}}^{t_n}\partial_t\boldsymbol\eta(s)\,ds,
\]
we have:
\begin{align*}
\|\delta_t\boldsymbol\eta^{n+1}\|_{H^{k+1}(\Omega_p)}
&\le
\frac{3}{2\Delta t}\int_{t_n}^{t_{n+1}}
\|\partial_t\boldsymbol\eta(s)\|_{H^{k+1}(\Omega_p)}\,ds
+\frac{1}{2\Delta t}\int_{t_{n-1}}^{t_n}
\|\partial_t\boldsymbol\eta(s)\|_{H^{k+1}(\Omega_p)}\,ds \\
&\le
2\,\|\partial_t\boldsymbol\eta\|_{L^\infty(I_n;H^{k+1}(\Omega_p))}.
\end{align*}
This quantity is absorbed into the generic constant \(C\). Inserting this into
\eqref{eq:kappa_C_projection} gives
\begin{align}
\|(I-\Pi_\eta)\,\delta_t\boldsymbol\eta^{n+1}\|_{L^2(\Omega_p)}^2
&\le
Ch^{2k+2},
\label{eq:kappa_C_L2}
\\
\|(I-\Pi_\eta)\,\delta_t\boldsymbol\eta^{n+1}\|_{H^1(\Omega_p)}^2
&\le
Ch^{2k}.
\label{eq:kappa_C_H1}
\end{align}
Combining \eqref{eq:kappa_split_bound} with
\eqref{eq:kappa_A_L2}, \eqref{eq:kappa_B_L2}, and \eqref{eq:kappa_C_L2}, we obtain
\[
\|\boldsymbol{\kappa}^{n+1}\|_{L^2(\Omega_p)}^2
\le
C h^{2k+2}
+
C \Delta t^3 \int_{I_n}
\|\partial_t^3 \boldsymbol{\eta}(s)\|_{L^2(\Omega_p)}^2\,ds.
\]
Similarly, combining \eqref{eq:kappa_split_bound} with
\eqref{eq:kappa_A_H1}, \eqref{eq:kappa_B_H1}, and \eqref{eq:kappa_C_H1}, we get
\[
\|\boldsymbol{\kappa}^{n+1}\|_{H^1(\Omega_p)}^2
\le
C h^{2k}
+
C \Delta t^3 \int_{I_n}
\|\partial_t^3 \boldsymbol{\eta}(s)\|_{H^1(\Omega_p)}^2\,ds.
\]
This proves \eqref{eq:kappa_bound_L2}--\eqref{eq:kappa_bound_H1}.
\end{proof}

\subsection{Projected equations and discrete error equations}
\label{subsec:error_equations_new}

We first write the time discrete continuous equations at $t_{n+1}$ by adding and
subtracting the projected quantities, we obtain:

{\textbf{Projected fluid equation.}}
For all \((\boldsymbol v_h,q_h)\in \boldsymbol V_{f,h}\times Q_{f,h}\),
\begin{align}
&\rho_f(\delta_t \Pi_u\boldsymbol u^{n+1},\boldsymbol v_h)_f
+2\mu_f(\boldsymbol D(\Pi_u\boldsymbol u^{n+1}),\boldsymbol D(\boldsymbol v_h))_f
-(\Pi_p p_f^{n+1},\nabla\!\cdot\!\boldsymbol v_h)_f
+(\nabla\!\cdot\!\Pi_u\boldsymbol u^{n+1},q_h)_f
\nonumber\\
&\quad
+\gamma\langle \boldsymbol P_f\Pi_u\boldsymbol u^{n+1},\boldsymbol P_f\boldsymbol v_h\rangle_\Gamma
+L\langle \Pi_u\boldsymbol u^{n+1}\!\cdot\!\boldsymbol n_f,\boldsymbol v_h\!\cdot\!\boldsymbol n_f\rangle_\Gamma
\nonumber\\
&=
\gamma\langle \boldsymbol P_f\Pi_\xi\boldsymbol\xi^{\star},\boldsymbol P_f\boldsymbol v_h\rangle_\Gamma
+\langle L\,\Pi_u\boldsymbol u^{\star}\!\cdot\!\boldsymbol n_f-\Pi_{\phi}\phi^{\star},
\boldsymbol v_h\!\cdot\!\boldsymbol n_f\rangle_\Gamma
+\mathfrak R_f^{n+1}(\boldsymbol v_h),
\label{eq:proj_fluid_eq}
\end{align}
where $\mathfrak R_f^{n+1}$ contains the time defect, space projection defects, AB2 interface defects:
\begin{align}
\mathfrak R_f^{n+1}(\boldsymbol v_h)
&:=
\rho_f(\delta_t\Pi_u\boldsymbol u^{n+1}-\partial_t\boldsymbol u^{n+1},\boldsymbol v_h)_f
+2\mu_f(\boldsymbol D\boldsymbol e_u^{I,n+1},\boldsymbol D\boldsymbol v_h)_f
\notag\\
&\quad
-(e_p^{I,n+1},\nabla\!\cdot\!\boldsymbol v_h)_f
+\gamma\langle \boldsymbol P_f\boldsymbol e_u^{I,n+1},
\boldsymbol P_f\boldsymbol v_h\rangle_\Gamma
+L\langle \boldsymbol e_u^{I,n+1}\!\cdot\!\boldsymbol n_f,
\boldsymbol v_h\!\cdot\!\boldsymbol n_f\rangle_\Gamma
\notag\\
&\quad
-\gamma\langle \boldsymbol P_f(\Pi_\xi\boldsymbol\xi^{\star}-\boldsymbol\xi^{n+1}),
\boldsymbol P_f\boldsymbol v_h\rangle_\Gamma
\notag\\
&\quad
-\langle L(\Pi_u\boldsymbol u^{\star}-\boldsymbol u^{n+1})\!\cdot\!\boldsymbol n_f
-(\Pi_{\phi}\phi^{\star}-\phi^{n+1}),
\boldsymbol v_h\!\cdot\!\boldsymbol n_f\rangle_\Gamma.
\label{eq:Rf_new}
\end{align}
By \eqref{eq:Fortin_property} and \(\nabla\!\cdot\!\boldsymbol u^{n+1}=0\),
\[
(\nabla\!\cdot\!\boldsymbol e_u^{I,n+1},q_h)_f=0,
\]
so there is no \(q_h\)-residual in \(\mathfrak R_f^{n+1}\).

{\textbf{Projected Biot equation.}}
For all
\((\boldsymbol v_{p,h},\boldsymbol\zeta_h,\psi_h)
\in \boldsymbol V_{p,h}\times\boldsymbol V_{p,h}\times Q_{p,h}\),
\begin{align}
&\rho_p(\delta_t\Pi_\xi\boldsymbol\xi^{n+1},\boldsymbol\zeta_h)_p
+a_e(\Pi_\eta\boldsymbol\eta^{n+1},\boldsymbol\zeta_h)
-\alpha(\Pi_{\phi}\phi^{n+1},\nabla\!\cdot\!\boldsymbol\zeta_h)_p
\nonumber\\
&\quad
+(\Pi_\xi\boldsymbol\xi^{n+1},\boldsymbol v_{p,h})_p
-(\delta_t\Pi_\eta\boldsymbol\eta^{n+1},\boldsymbol v_{p,h})_p
+C_0(\delta_t\Pi_{\phi}\phi^{n+1},\psi_h)_p
+\alpha(\nabla\!\cdot\!\Pi_\xi\boldsymbol\xi^{n+1},\psi_h)_p
\nonumber\\
&\quad
+(K\nabla\Pi_{\phi}\phi^{n+1},\nabla\psi_h)_p
+\gamma\langle \boldsymbol P_p\Pi_\xi\boldsymbol\xi^{n+1},\boldsymbol P_p\boldsymbol\zeta_h\rangle_\Gamma
+\langle \Pi_\xi\boldsymbol\xi^{n+1}\!\cdot\!\boldsymbol n_p,\boldsymbol\zeta_h\!\cdot\!\boldsymbol n_p\rangle_\Gamma
\nonumber\\
&\quad
+\langle \Pi_{\phi}\phi^{n+1},\boldsymbol\zeta_h\!\cdot\!\boldsymbol n_p\rangle_\Gamma
+\frac1L\langle \Pi_{\phi}\phi^{n+1},\psi_h\rangle_\Gamma
-\langle \Pi_\xi\boldsymbol\xi^{n+1}\!\cdot\!\boldsymbol n_p,\psi_h\rangle_\Gamma
\nonumber\\
&=
\gamma\langle \boldsymbol P_p\Pi_u\boldsymbol u^{\star},\boldsymbol P_p\boldsymbol\zeta_h\rangle_\Gamma
+\langle \Pi_\xi\boldsymbol\xi^{\star}\!\cdot\!\boldsymbol n_p,\boldsymbol\zeta_h\!\cdot\!\boldsymbol n_p\rangle_\Gamma
\nonumber\\
&\quad
+\langle -\Pi_u\boldsymbol u^{\star}\!\cdot\!\boldsymbol n_p
+L^{-1}\Pi_{\phi}\phi^{\star},\psi_h\rangle_\Gamma
+\mathfrak R_p^{n+1}(\boldsymbol v_{p,h},\boldsymbol\zeta_h,\psi_h),
\label{eq:proj_biot_eq}
\end{align}
Similarly, all the mismatch terms are grouped into $\mathfrak R_p^{n+1}$:
\begin{align}
\mathfrak R_p^{n+1}(\boldsymbol v_{p,h},\boldsymbol\zeta_h,\psi_h)
&:=
\rho_p(\delta_t\Pi_\xi\boldsymbol\xi^{n+1}-\partial_t\boldsymbol\xi^{n+1},\boldsymbol\zeta_h)_p
+(\boldsymbol\kappa^{n+1},\boldsymbol v_{p,h})_p
\nonumber\\
&\quad
-\alpha(e_\phi^{I,n+1},\nabla\!\cdot\!\boldsymbol\zeta_h)_p
+C_0(\delta_t\Pi_{\phi}\phi^{n+1}-\partial_t\phi^{n+1},\psi_h)_p
\notag\\
&\quad
+\alpha(\nabla\!\cdot\!\boldsymbol e_\xi^{I,n+1},\psi_h)_p
-\gamma\langle \boldsymbol P_p(\Pi_u\boldsymbol u^{\star}-\boldsymbol u^{n+1}),
\boldsymbol P_p\boldsymbol\zeta_h\rangle_\Gamma
\notag\\
&\quad
-\langle (\Pi_\xi\boldsymbol\xi^{\star}-\boldsymbol\xi^{n+1})\!\cdot\!\boldsymbol n_p,
\boldsymbol\zeta_h\!\cdot\!\boldsymbol n_p\rangle_\Gamma
\notag\\
&\quad
-\langle -(\Pi_u\boldsymbol u^{\star}-\boldsymbol u^{n+1})\!\cdot\!\boldsymbol n_p
+L^{-1}(\Pi_{\phi}\phi^{\star}-\phi^{n+1}),\psi_h\rangle_\Gamma
\notag\\
&\quad
+\gamma\langle \boldsymbol P_p\boldsymbol e_\xi^{I,n+1},
\boldsymbol P_p\boldsymbol\zeta_h\rangle_\Gamma
+\langle \boldsymbol e_\xi^{I,n+1}\!\cdot\!\boldsymbol n_p,
\boldsymbol\zeta_h\!\cdot\!\boldsymbol n_p\rangle_\Gamma
\notag\\
&\quad
+\langle e_\phi^{I,n+1},
\boldsymbol\zeta_h\!\cdot\!\boldsymbol n_p\rangle_\Gamma
-\langle \boldsymbol e_\xi^{I,n+1}\!\cdot\!\boldsymbol n_p,
\psi_h\rangle_\Gamma.
\label{eq:Rp_new}
\end{align}
The elasticity Ritz projection \eqref{eq:eta_projection} eliminates the
\(
a_e(\Pi_\eta\boldsymbol\eta^{n+1}-\boldsymbol\eta^{n+1},\boldsymbol\zeta_h)
\) term,
and the Darcy Ritz projection \eqref{eq:phi_projection} eliminates
$$
\left(K \nabla\left(\Pi_{\phi}\phi^{n+1}-\phi^{n+1}\right), \nabla \psi_h\right)_p+\frac{1}{L}\left\langle\Pi_{\phi}\phi^{n+1}-\phi^{n+1}, \psi_h\right\rangle_{\Gamma} .
$$

Next, subtracting the fully discrete scheme from equations
\eqref{eq:proj_fluid_eq}-\eqref{eq:proj_biot_eq} yields the discrete error equations:

{\textbf{Discrete fluid error equation.}}
For all \((\boldsymbol v_h,q_h)\in\boldsymbol V_{f,h}\times Q_{f,h}\),
\begin{align}
&\rho_f(\delta_t \boldsymbol e_u^{h,n+1},\boldsymbol v_h)_f
+2\mu_f(\boldsymbol D(\boldsymbol e_u^{h,n+1}),\boldsymbol D(\boldsymbol v_h))_f
-(e_p^{h,n+1},\nabla\!\cdot\!\boldsymbol v_h)_f
+(\nabla\!\cdot\!\boldsymbol e_u^{h,n+1},q_h)_f
\nonumber\\
&\quad
+\gamma\langle \boldsymbol P_f\boldsymbol e_u^{h,n+1},\boldsymbol P_f\boldsymbol v_h\rangle_\Gamma
+L\langle \boldsymbol e_u^{h,n+1}\!\cdot\!\boldsymbol n_f,\boldsymbol v_h\!\cdot\!\boldsymbol n_f\rangle_\Gamma
\nonumber\\
&=
\gamma\langle \boldsymbol P_f\boldsymbol e_\xi^{h,\star},\boldsymbol P_f\boldsymbol v_h\rangle_\Gamma
+\langle L\,\boldsymbol e_u^{h,\star}\!\cdot\!\boldsymbol n_f-e_\phi^{h,\star},
\boldsymbol v_h\!\cdot\!\boldsymbol n_f\rangle_\Gamma
+\mathfrak R_f^{n+1}(\boldsymbol v_h).
\label{eq:fluid_error_new}
\end{align}

{\textbf{Discrete Biot error equation.}}
For all
\((\boldsymbol v_{p,h},\boldsymbol\zeta_h,\psi_h)
\in\boldsymbol V_{p,h}\times\boldsymbol V_{p,h}\times Q_{p,h}\),
\begin{align}
&\rho_p(\delta_t\boldsymbol e_\xi^{h,n+1},\boldsymbol\zeta_h)_p
+a_e(\boldsymbol e_\eta^{h,n+1},\boldsymbol\zeta_h)
-\alpha(e_\phi^{h,n+1},\nabla\!\cdot\!\boldsymbol\zeta_h)_p
\nonumber\\
&\quad
+(\boldsymbol e_\xi^{h,n+1},\boldsymbol v_{p,h})_p
-(\delta_t\boldsymbol e_\eta^{h,n+1},\boldsymbol v_{p,h})_p
+C_0(\delta_t e_\phi^{h,n+1},\psi_h)_p
+\alpha(\nabla\!\cdot\!\boldsymbol e_\xi^{h,n+1},\psi_h)_p
\nonumber\\
&\quad
+(K\nabla e_\phi^{h,n+1},\nabla\psi_h)_p
+\gamma\langle \boldsymbol P_p\boldsymbol e_\xi^{h,n+1},\boldsymbol P_p\boldsymbol\zeta_h\rangle_\Gamma
+\langle \boldsymbol e_\xi^{h,n+1}\!\cdot\!\boldsymbol n_p,\boldsymbol\zeta_h\!\cdot\!\boldsymbol n_p\rangle_\Gamma
\nonumber\\
&\quad
+\langle e_\phi^{h,n+1},\boldsymbol\zeta_h\!\cdot\!\boldsymbol n_p\rangle_\Gamma
+\frac1L\langle e_\phi^{h,n+1},\psi_h\rangle_\Gamma
-\langle \boldsymbol e_\xi^{h,n+1}\!\cdot\!\boldsymbol n_p,\psi_h\rangle_\Gamma
\nonumber\\
&=
\gamma\langle \boldsymbol P_p\boldsymbol e_u^{h,\star},\boldsymbol P_p\boldsymbol\zeta_h\rangle_\Gamma
+\langle \boldsymbol e_\xi^{h,\star}\!\cdot\!\boldsymbol n_p,\boldsymbol\zeta_h\!\cdot\!\boldsymbol n_p\rangle_\Gamma
\nonumber\\
&\quad
+\langle -\boldsymbol e_u^{h,\star}\!\cdot\!\boldsymbol n_p+L^{-1}e_\phi^{h,\star},\psi_h\rangle_\Gamma
+\mathfrak R_p^{n+1}(\boldsymbol v_{p,h},\boldsymbol\zeta_h,\psi_h).
\label{eq:biot_error_new}
\end{align}

We now introduce the discrete error energy \(\mathcal E_h^n\) and the associated dissipation terms, defined by
\begin{align}
\mathcal E_h^{n}
&:=
\frac{\rho_f}{4}\Big(\|\boldsymbol e_u^{h,n}\|_{L^2\left(\Omega_f\right)}^2
+\|2\boldsymbol e_u^{h,n}-\boldsymbol e_u^{h,n-1}\|_{L^2\left(\Omega_f\right)}^2\Big)
+\frac{\rho_p}{4}\Big(\|\boldsymbol e_\xi^{h,n}\|_{L^2\left(\Omega_p\right)}^2
+\|2\boldsymbol e_\xi^{h,n}-\boldsymbol e_\xi^{h,n-1}\|_{L^2\left(\Omega_p\right)}^2\Big)
\notag\\
&\quad
+\frac{C_0}{4}\Big(\|e_\phi^{h,n}\|_{L^2\left(\Omega_p\right)}^2
+\|2e_\phi^{h,n}-e_\phi^{h,n-1}\|_{L^2\left(\Omega_p\right)}^2\Big)
+\frac{1}{4}\Big(\|\boldsymbol e_\eta^{h,n}\|_{a_e}^2
+\|2\boldsymbol e_\eta^{h,n}-\boldsymbol e_\eta^{h,n-1}\|_{a_e}^2\Big),
\label{eq:Eh_def}
\\
\mathcal D_{h,\mathrm{bulk}}^{n+1}
&:=
\frac{\rho_f}{4\Delta t}\|\boldsymbol e_u^{h,n+1}-2\boldsymbol e_u^{h,n}+\boldsymbol e_u^{h,n-1}\|_{L^2\left(\Omega_f\right)}^2
+\frac{\rho_p}{4\Delta t}\|\boldsymbol e_\xi^{h,n+1}-2\boldsymbol e_\xi^{h,n}+\boldsymbol e_\xi^{h,n-1}\|_{L^2\left(\Omega_p\right)}^2
\notag\\
&\quad
+\frac{C_0}{4\Delta t}\|e_\phi^{h,n+1}-2e_\phi^{h,n}+e_\phi^{h,n-1}\|_{L^2\left(\Omega_p\right)}^2
+\frac{1}{4\Delta t}\|\boldsymbol e_\eta^{h,n+1}-2\boldsymbol e_\eta^{h,n}+\boldsymbol e_\eta^{h,n-1}\|_{a_e}^2
\notag\\
&\quad
+2\mu_f\|\boldsymbol D(\boldsymbol e_u^{h,n+1})\|_{L^2\left(\Omega_f\right)}^2
+\|K^{1/2}\nabla e_\phi^{h,n+1}\|_{L^2\left(\Omega_p\right)}^2,
\label{eq:Dh_bulk_def}
\\
\mathcal D_{h,\Gamma}^{n+1}
&:=
\gamma\|\boldsymbol P_f\boldsymbol e_u^{h,n+1}\|_{L^2\left(\Gamma\right)}^2
+\gamma\|\boldsymbol P_p\boldsymbol e_\xi^{h,n+1}\|_{L^2\left(\Gamma\right)}^2
\notag\\
&\quad
+L\|\boldsymbol e_u^{h,n+1}\!\cdot\!\boldsymbol n_f\|_{L^2\left(\Gamma\right)}^2
+\|\boldsymbol e_\xi^{h,n+1}\!\cdot\!\boldsymbol n_p\|_{L^2\left(\Gamma\right)}^2
+L^{-1}\|e_\phi^{h,n+1}\|_{L^2\left(\Gamma\right)}^2.
\label{eq:Dh_Gamma_def}
\end{align}
And the grouped defect terms are split into bulk and interface contributions:
\begin{align}
\mathcal T_{\mathrm{bulk}}^{n+1}
&:=
\|\delta_t\Pi_u\boldsymbol u^{n+1}-\partial_t\boldsymbol u^{n+1}\|_{L^2\left(\Omega_f\right)}^2
+\|\boldsymbol e_u^{I,n+1}\|_{H^1\left(\Omega_f\right)}^2
+\|e_p^{I,n+1}\|_{L^2\left(\Omega_f\right)}^2
\notag\\
&\quad
+\|\delta_t\Pi_\xi\boldsymbol\xi^{n+1}-\partial_t\boldsymbol\xi^{n+1}\|_{L^2\left(\Omega_p\right)}^2
+\|\delta_t\Pi_{\phi}\phi^{n+1}-\partial_t\phi^{n+1}\|_{L^2\left(\Omega_p\right)}^2
+\|\boldsymbol\kappa^{n+1}\|_{H^1\left(\Omega_p\right)}^2
\notag\\
&\quad
+\|\nabla e_\phi^{I,n+1}\|_{L^2\left(\Omega_p\right)}^2
+\|\nabla\!\cdot\!\boldsymbol e_\xi^{I,n+1}\|_{L^2\left(\Omega_p\right)}^2,
\label{eq:T_bulk_def}
\\
\mathcal T_\Gamma^{n+1}
&:=
\|\boldsymbol e_u^{I,n+1}\|_{L^2\left(\Gamma\right)}^2
+\|e_\phi^{I,n+1}\|_{L^2\left(\Gamma\right)}^2
+\|\boldsymbol e_\xi^{I,n+1}\|_{L^2(\Gamma)}^2
\notag\\
&\quad
+\|\Pi_u\boldsymbol u^{\star}-\boldsymbol u^{n+1}\|_{L^2\left(\Gamma\right)}^2
+\|\Pi_\xi\boldsymbol\xi^{\star}-\boldsymbol\xi^{n+1}\|_{L^2\left(\Gamma\right)}^2
+\|\Pi_{\phi}\phi^{\star}-\phi^{n+1}\|_{L^2\left(\Gamma\right)}^2.
\label{eq:T_Gamma_def}
\end{align}

\begin{lemma}[One-step error inequality]
\label{lem:one_step_error}
Under the hypotheses of Theorem~\ref{thm:stability_cfl}, the discrete errors satisfy
\begin{equation}
\frac{1}{\Delta t}\big(\mathcal E_h^{n+1}-\mathcal E_h^n\big)
+\frac14\mathcal D_{h,\mathrm{bulk}}^{n+1}
+\gamma\|\boldsymbol P_f\boldsymbol e_u^{h,n+1}
-\boldsymbol P_p\boldsymbol e_\xi^{h,n+1}\|_{L^2(\Gamma)}^2
\le
C\Big(1+\frac{\Delta t}{h^2}\Big)\mathcal E_h^{n+1}
+
C\mathcal T_{\mathrm{bulk}}^{n+1}
+
Ch^{-1}\mathcal T_\Gamma^{n+1},
\label{eq:one_step_error}
\end{equation}
where \(\mathcal D_{h,\mathrm{bulk}}^{n+1}\) is defined in
\eqref{eq:Dh_bulk_def}, \(\mathcal T_{\mathrm{bulk}}^{n+1}\) in
\eqref{eq:T_bulk_def}, and \(\mathcal T_\Gamma^{n+1}\) in
\eqref{eq:T_Gamma_def}.
\end{lemma}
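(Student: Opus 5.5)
I will mimic the stability argument of Theorem~\ref{thm:stability_cfl}, now applied to the discrete error equations \eqref{eq:fluid_error_new}--\eqref{eq:biot_error_new} with the residual functionals $\mathfrak R_f^{n+1}$, $\mathfrak R_p^{n+1}$ as additional forcing.

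\textbf{Energy identity.} In the fluid error equation I take $\boldsymbol v_h=\boldsymbol e_u^{h,n+1}$ and $q_h=e_p^{h,n+1}$, so that the pressure--divergence terms cancel. In the Biot error equation I take $\boldsymbol\zeta_h=\boldsymbol e_\xi^{h,n+1}$, $\psi_h=e_\phi^{h,n+1}$, and $\boldsymbol v_{p,h}=0$. The kinematic variable requires care, since now $\boldsymbol e_\xi^{h,n+1}=\delta_t\boldsymbol e_\eta^{h,n+1}+\boldsymbol\kappa^{n+1}$ by \eqref{eq:error_kinematic_relation}. Hence
\[
a_e(\boldsymbol e_\eta^{h,n+1},\boldsymbol e_\xi^{h,n+1})
=a_e(\boldsymbol e_\eta^{h,n+1},\delta_t\boldsymbol e_\eta^{h,n+1})+a_e(\boldsymbol e_\eta^{h,n+1},\boldsymbol\kappa^{n+1}).
\]
The first piece yields the BDF2 identity \eqref{eq:bdf2_elastic_identity}. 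The second is bounded by $\|\boldsymbol e_\eta^{h,n+1}\|_{a_e}^2+C\|\boldsymbol\kappa^{n+1}\|_{H^1}^2\le C\mathcal E_h^{n+1}+C\mathcal T_{\mathrm{bulk}}^{n+1}$. This is exactly why $\|\boldsymbol\kappa\|_{H^1}$ appears in $\mathcal T_{\mathrm{bulk}}$, and why $\mathcal E_h^{n+1}$ appears with an $O(1)$ coefficient on the right.

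The remaining terms then combine as in the stability proof:
\begin{itemize}
\item the $\alpha$-coupling terms and the $\langle e_\phi^h,\boldsymbol e_\xi^h\cdot\boldsymbol n_p\rangle$ pair cancel;
\item the BDF2 identity \eqref{eq:bdf2_energy_identity} produces $\frac1{\Delta t}(\mathcal E_h^{n+1}-\mathcal E_h^n)+\mathcal D_{h,\mathrm{bulk}}^{n+1}+\mathcal D_{h,\Gamma}^{n+1}$ on the left;
\item on the right stand the AB2 interface terms $\mathcal I_h^{n+1}$ and $\mathfrak R_f^{n+1}(\boldsymbol e_u^{h,n+1})+\mathfrak R_p^{n+1}(0,\boldsymbol e_\xi^{h,n+1},e_\phi^{h,n+1})$.
\end{itemize}

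\textbf{Interface terms.} I decompose $\mathcal I_h^{n+1}=\mathcal I_{h,\mathrm{cur}}^{n+1}+\mathcal R_{h,\Gamma}^{n+1}$ via \eqref{eq:ab2_residual_identity}. Here I keep the tangential part exactly instead of discarding it: by \eqref{eq:Icur_simplified_alt},
\[
\mathcal D_{h,\Gamma}^{n+1}-\mathcal I_{h,\mathrm{cur}}^{n+1}
=\gamma\|\boldsymbol P_f\boldsymbol e_u^{h,n+1}-\boldsymbol P_p\boldsymbol e_\xi^{h,n+1}\|_{L^2(\Gamma)}^2 ,
\]
which gives the extra interface dissipation term in \eqref{eq:one_step_error}. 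The second-difference residual $\mathcal R_{h,\Gamma}^{n+1}$ is estimated term by term exactly as in \eqref{eq:R1_alt}--\eqref{eq:R7_alt}. This produces $\frac14\mathcal D_{h,\mathrm{bulk}}^{n+1}$, a term $C\Delta t\|\boldsymbol D\boldsymbol e_u^{h,n+1}\|^2$ absorbed using $\Delta t\le\Delta t_0$, and $C\frac{\Delta t}{h^2}\mathcal E_h^{n+1}$.

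\textbf{Residual functionals.} Each bulk term in $\mathfrak R_f$, $\mathfrak R_p$ is treated by Cauchy--Schwarz and Young:
\begin{itemize}
\item the $\boldsymbol D$ and $\nabla\psi$ pieces go into $\frac18$ of the viscous and Darcy dissipation, using \eqref{eq:discrete_korn} and \eqref{eq:K_coercive};
\item the $L^2$ pieces go into $C\mathcal E_h^{n+1}$;
\item the $\nabla\!\cdot\boldsymbol\zeta_h$ pairings with $e_\phi^{I}$ and $e_p^{I}$ are handled as follows. The pressure term against $\nabla\!\cdot\boldsymbol e_u^h$ uses \eqref{eq:div_from_korn}. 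The term $\alpha(e_\phi^{I},\nabla\!\cdot\boldsymbol e_\xi^{h})$ I first try to move by integration by parts onto $\nabla e_\phi^{I}$ plus an interface term, bounded by $\|\nabla e_\phi^I\|\,\|\boldsymbol e_\xi^h\|+\|e_\phi^I\|_\Gamma\|\boldsymbol e_\xi^h\|_\Gamma$. Failing that, I use an inverse inequality, which is affordable under the CFL condition.
\end{itemize}

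Every interface pairing $\langle\theta,\boldsymbol w_h\rangle_\Gamma$, with $\theta$ a projection or AB2 interface defect and $\boldsymbol w_h$ a discrete error, is bounded by $h^{-1}\|\theta\|_\Gamma^2+h\|\boldsymbol w_h\|_\Gamma^2$. The factor $h\|\boldsymbol w_h\|_\Gamma^2$ is then controlled by \eqref{eq:disc_trace_fluid} or \eqref{eq:disc_trace_inverse_poro} by $C(\|\boldsymbol w_h\|_{L^2}^2+h^2\|\boldsymbol D\boldsymbol w_h\|^2)$. This yields $Ch^{-1}\mathcal T_\Gamma^{n+1}+C\mathcal E_h^{n+1}$ plus small absorbable multiples of the dissipation. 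Collecting all contributions gives \eqref{eq:one_step_error}.

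\textbf{Main obstacle.} I expect the main difficulty to be the bookkeeping of absorption coefficients: everything must be placed into at most $\frac34$ of $\mathcal D_{h,\mathrm{bulk}}$, while the tangential mismatch term is kept exactly. A secondary difficulty is the $\alpha(e_\phi^{I},\nabla\!\cdot\boldsymbol e_\xi^h)$ term, since no $H^1$ control of $\boldsymbol e_\xi^h$ is available.
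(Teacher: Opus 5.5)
Your proposal is correct and follows the paper's proof essentially step for step. The shared steps are: the same fluid test functions; the splitting $a_e(\boldsymbol e_\eta^{h,n+1},\boldsymbol e_\xi^{h,n+1})=a_e(\boldsymbol e_\eta^{h,n+1},\delta_t\boldsymbol e_\eta^{h,n+1})+a_e(\boldsymbol e_\eta^{h,n+1},\boldsymbol\kappa^{n+1})$; the decomposition $\mathcal I_h^{n+1}=\mathcal I_{h,\mathrm{cur}}^{n+1}+\mathcal R_{h,\Gamma}^{n+1}$ with the exact tangential-mismatch identity; the integration by parts of $\alpha(e_\phi^{I,n+1},\nabla\!\cdot\!\boldsymbol e_\xi^{h,n+1})_p$, which needs no inverse-inequality fallback because $\boldsymbol e_\xi^{h,n+1}$ vanishes on $\Sigma_{D,p}$; and $h^{-1}$-weighted Young inequalities for the interface defects. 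The only difference is cosmetic: you test the kinematic row with $\boldsymbol v_{p,h}=0$, while the paper tests with $\delta_t\boldsymbol e_\eta^{h,n+1}$ and cancels the result against $(\boldsymbol\kappa^{n+1},\boldsymbol v_{p,h})_p$ in $\mathfrak R_p^{n+1}$; these are equivalent, since by \eqref{eq:error_kinematic_relation} that row of the error equation holds identically.
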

Namely, the error energy at time level $n+1$, together with the bulk dissipation and the tangential interface mismatch, is controlled by the current error energy multiplied by the factor $1+\frac{\Delta t}{h^2}$, plus the bulk consistency defects and the \(h^{-1}\)-weighted interface consistency defects. The weight \(h^{-1}\) arises from the inverse trace estimates used to control the normal traces of the discrete poroelastic error variables.

\begin{proof}
We test \eqref{eq:fluid_error_new} with
\((\boldsymbol v_h,q_h)=(\boldsymbol e_u^{h,n+1},e_p^{h,n+1})\), and
\eqref{eq:biot_error_new} with
$
(\boldsymbol v_{p,h},\boldsymbol\zeta_h,\psi_h)
=
(\delta_t\boldsymbol e_\eta^{h,n+1},
 \boldsymbol e_\xi^{h,n+1},
 e_\phi^{h,n+1})$.
 In the fluid equation, the pressure-divergence terms cancel
 $$
-\left(e_p^{h,n+1}, \nabla \cdot \boldsymbol e_u^{h,n+1}\right)_f+\left(\nabla \cdot \boldsymbol e_u^{h,n+1}, e_p^{h,n+1}\right)_f=0 .
$$
 While in the Biot
equation, the following terms cancel:
\[
-\alpha(e_\phi^{h,n+1},\nabla\!\cdot\!\boldsymbol e_\xi^{h,n+1})_p
+\alpha(\nabla\!\cdot\!\boldsymbol e_\xi^{h,n+1},e_\phi^{h,n+1})_p=0,
\]
\[
\langle e_\phi^{h,n+1},\boldsymbol e_\xi^{h,n+1}\!\cdot\!\boldsymbol n_p\rangle_\Gamma
-
\langle \boldsymbol e_\xi^{h,n+1}\!\cdot\!\boldsymbol n_p,e_\phi^{h,n+1}\rangle_\Gamma
=0.
\]
Using equation \eqref{eq:error_kinematic_relation}, we rewrite the elasticity term as:
\[
a_e(\boldsymbol e_\eta^{h,n+1},\boldsymbol e_\xi^{h,n+1})
=
a_e(\boldsymbol e_\eta^{h,n+1},\delta_t\boldsymbol e_\eta^{h,n+1})
+
a_e(\boldsymbol e_\eta^{h,n+1},\boldsymbol\kappa^{n+1}).
\]
The weak kinematic relation in the error equation produces the term
$(\boldsymbol e_\xi^{h,n+1}, \delta_t \boldsymbol e_\eta^{h,n+1})_p
-\|\delta_t \boldsymbol e_\eta^{h,n+1}\|_{L^2(\Omega_p)}^2
=(\boldsymbol{\kappa}^{n+1}, \delta_t \boldsymbol e_\eta^{h,n+1})_p$,
which cancels exactly with the kinematic defect contribution
$(\boldsymbol\kappa^{n+1},\boldsymbol v_{p,h})_p$ in
$\mathfrak R_p^{n+1}$ when tested with
$\boldsymbol v_{p,h}=\delta_t\boldsymbol e_\eta^{h,n+1}$.
We define the reduced poroelastic residual
\begin{equation}\label{eq:Rp_tilde_def}
\widetilde{\mathfrak R}_p^{n+1}
:=
\mathfrak R_p^{n+1}
-
(\boldsymbol\kappa^{n+1},\boldsymbol v_{p,h})_p,
\end{equation}
so that after the cancellation, only
$\widetilde{\mathfrak R}_p^{n+1}$ and the elastic cross term
$a_e(\boldsymbol e_\eta^{h,n+1},\boldsymbol\kappa^{n+1})$ remain.
Applying the BDF2 identities \eqref{eq:bdf2_energy_identity} and
\eqref{eq:bdf2_elastic_identity}, we obtain
\begin{align}
\frac{1}{\Delta t}\big(\mathcal E_h^{n+1}-\mathcal E_h^n\big)
+\mathcal D_{h,\mathrm{bulk}}^{n+1}
+\mathcal D_{h,\Gamma}^{n+1}
=
\mathcal I_h^{n+1}
+\mathfrak R_f^{n+1}(\boldsymbol e_u^{h,n+1})
+\widetilde{\mathfrak R}_p^{n+1}(\delta_t\boldsymbol e_\eta^{h,n+1},
\boldsymbol e_\xi^{h,n+1},e_\phi^{h,n+1})
\notag\\
+\,a_e(\boldsymbol e_\eta^{h,n+1},\boldsymbol\kappa^{n+1}),
\label{eq:error_energy_identity}
\end{align}
where \(\mathcal I_h^{n+1}\) is the sum of the explicit interface terms in
the error equations
\begin{align}
\mathcal I_h^{n+1}
&:=
\gamma\big\langle \boldsymbol P_f \boldsymbol e_\xi^{h,\star},
\boldsymbol P_f \boldsymbol e_u^{h,n+1}\big\rangle_\Gamma
+\big\langle
L(\boldsymbol e_u^{h,\star}\!\cdot\!\boldsymbol n_f)
-e_\phi^{h,\star},
\boldsymbol e_u^{h,n+1}\!\cdot\!\boldsymbol n_f
\big\rangle_\Gamma
\notag\\
&\quad
+\gamma\big\langle \boldsymbol P_p \boldsymbol e_u^{h,\star},
\boldsymbol P_p \boldsymbol e_\xi^{h,n+1}\big\rangle_\Gamma
+\big\langle
\boldsymbol e_\xi^{h,\star}\!\cdot\!\boldsymbol n_p,
\boldsymbol e_\xi^{h,n+1}\!\cdot\!\boldsymbol n_p
\big\rangle_\Gamma
\notag\\
&\quad
+\big\langle
-\boldsymbol e_u^{h,\star}\!\cdot\!\boldsymbol n_p
+L^{-1}e_\phi^{h,\star},
e_\phi^{h,n+1}
\big\rangle_\Gamma,
\label{eq:Ih_def}
\end{align}
with the AB2 extrapolants
\[
\boldsymbol e_u^{h,\star}=2\boldsymbol e_u^{h,n}-\boldsymbol e_u^{h,n-1},
\quad
\boldsymbol e_\xi^{h,\star}=2\boldsymbol e_\xi^{h,n}-\boldsymbol e_\xi^{h,n-1},
\quad
e_\phi^{h,\star}=2e_\phi^{h,n}-e_\phi^{h,n-1}.
\]
The interface contribution \(\mathcal I_h^{n+1}\) has the same algebraic
structure as \(\mathcal I^{n+1}\) in the stability proof. We apply the
identical AB2 residual decomposition. Define the error second-differences
\begin{equation}
\boldsymbol r_{e_u}^{n+1}:=\boldsymbol e_u^{h,n+1}-2\boldsymbol e_u^{h,n}+\boldsymbol e_u^{h,n-1},
\quad
\boldsymbol r_{e_\xi}^{n+1}:=\boldsymbol e_\xi^{h,n+1}-2\boldsymbol e_\xi^{h,n}+\boldsymbol e_\xi^{h,n-1},
\quad
r_{e_\phi}^{n+1}:=e_\phi^{h,n+1}-2e_\phi^{h,n}+e_\phi^{h,n-1}.
\label{eq:error_second_diff}
\end{equation}
These satisfy the AB2 residual identity \eqref{eq:ab2_residual_identity}:
\[
\boldsymbol e_u^{h,\star} = \boldsymbol e_u^{h,n+1} - \boldsymbol r_{e_u}^{n+1},
\quad
\boldsymbol e_\xi^{h,\star} = \boldsymbol e_\xi^{h,n+1} - \boldsymbol r_{e_\xi}^{n+1},
\quad
e_\phi^{h,\star} = e_\phi^{h,n+1} - r_{e_\phi}^{n+1}.
\]
Note also that the BDF2 second-difference terms in \(\mathcal D_{h,\mathrm{bulk}}^{n+1}\) are
precisely \(\frac{1}{4\Delta t}\|\boldsymbol r_{e_u}^{n+1}\|^2\),
\(\frac{1}{4\Delta t}\|\boldsymbol r_{e_\xi}^{n+1}\|^2\), etc.; they are produced by
the same BDF2 identity step as \(\mathcal E_h^{n+1}-\mathcal E_h^n\) but are
separate positive terms, not contained in the energy difference.

Substituting into \(\mathcal I_h^{n+1}\) and separating current-step from
residual parts gives
\[
\mathcal I_h^{n+1} = \mathcal I_{h,\mathrm{cur}}^{n+1} + \mathcal R_{h,\Gamma}^{n+1},
\]
where
\begin{align}
\mathcal I_{h,\mathrm{cur}}^{n+1}
&:=
\gamma\langle \boldsymbol P_f\boldsymbol e_\xi^{h,n+1},\boldsymbol P_f\boldsymbol e_u^{h,n+1}\rangle_\Gamma
+\langle L(\boldsymbol e_u^{h,n+1}\!\cdot\!\boldsymbol n_f)-e_\phi^{h,n+1},
\boldsymbol e_u^{h,n+1}\!\cdot\!\boldsymbol n_f\rangle_\Gamma
\notag\\
&\quad
+\gamma\langle \boldsymbol P_p\boldsymbol e_u^{h,n+1},\boldsymbol P_p\boldsymbol e_\xi^{h,n+1}\rangle_\Gamma
+\langle \boldsymbol e_\xi^{h,n+1}\!\cdot\!\boldsymbol n_p,
\boldsymbol e_\xi^{h,n+1}\!\cdot\!\boldsymbol n_p\rangle_\Gamma
\notag\\
&\quad
+\langle -\boldsymbol e_u^{h,n+1}\!\cdot\!\boldsymbol n_p
+L^{-1}e_\phi^{h,n+1},e_\phi^{h,n+1}\rangle_\Gamma,
\label{eq:Ih_cur_def}
\end{align}
and
\begin{align}
\mathcal R_{h,\Gamma}^{n+1}
&:=
-\gamma\langle \boldsymbol P_f\boldsymbol r_{e_\xi}^{n+1},\boldsymbol P_f\boldsymbol e_u^{h,n+1}\rangle_\Gamma
-\gamma\langle \boldsymbol P_p\boldsymbol r_{e_u}^{n+1},\boldsymbol P_p\boldsymbol e_\xi^{h,n+1}\rangle_\Gamma
\notag\\
&\quad
-L\langle \boldsymbol r_{e_u}^{n+1}\!\cdot\!\boldsymbol n_f,
\boldsymbol e_u^{h,n+1}\!\cdot\!\boldsymbol n_f\rangle_\Gamma
+\langle r_{e_\phi}^{n+1},
\boldsymbol e_u^{h,n+1}\!\cdot\!\boldsymbol n_f\rangle_\Gamma
\notag\\
&\quad
-\langle \boldsymbol r_{e_\xi}^{n+1}\!\cdot\!\boldsymbol n_p,
\boldsymbol e_\xi^{h,n+1}\!\cdot\!\boldsymbol n_p\rangle_\Gamma
+\langle \boldsymbol r_{e_u}^{n+1}\!\cdot\!\boldsymbol n_p,
e_\phi^{h,n+1}\rangle_\Gamma
-L^{-1}\langle r_{e_\phi}^{n+1},e_\phi^{h,n+1}\rangle_\Gamma.
\label{eq:Rh_Gamma_def}
\end{align}
Since \(\boldsymbol n_p=-\boldsymbol n_f\) and \(\boldsymbol P_f=\boldsymbol P_p\) on \(\Gamma\),
\[
\mathcal I_{h,\mathrm{cur}}^{n+1}
=
2\gamma\langle \boldsymbol P_f\boldsymbol e_\xi^{h,n+1},
\boldsymbol P_f\boldsymbol e_u^{h,n+1}\rangle_\Gamma
+L\|\boldsymbol e_u^{h,n+1}\!\cdot\!\boldsymbol n_f\|_{L^2(\Gamma)}^2
+\|\boldsymbol e_\xi^{h,n+1}\!\cdot\!\boldsymbol n_p\|_{L^2(\Gamma)}^2
+L^{-1}\|e_\phi^{h,n+1}\|_{L^2(\Gamma)}^2.
\]
The normal and pore-pressure interface terms appear identically in
\(\mathcal D_{h,\Gamma}^{n+1}\), so they cancel exactly. The tangential terms
combine via \(\|a\|^2-2\langle a,b\rangle+\|b\|^2=\|a-b\|^2\), giving the
exact identity
\begin{equation}
\mathcal D_{h,\Gamma}^{n+1} - \mathcal I_{h,\mathrm{cur}}^{n+1}
=
\gamma\|\boldsymbol P_f\boldsymbol e_u^{h,n+1}
-\boldsymbol P_p\boldsymbol e_\xi^{h,n+1}\|_{L^2(\Gamma)}^2.
\label{eq:Ih_cur_identity}
\end{equation}
Substituting the decomposition
\(\mathcal I_h^{n+1}=\mathcal I_{h,\mathrm{cur}}^{n+1}+\mathcal R_{h,\Gamma}^{n+1}\)
into \eqref{eq:error_energy_identity} and using \eqref{eq:Ih_cur_identity} gives
\begin{align}
&\frac{1}{\Delta t}\big(\mathcal E_h^{n+1}-\mathcal E_h^n\big)
+\mathcal D_{h,\mathrm{bulk}}^{n+1}
+\gamma\|\boldsymbol P_f\boldsymbol e_u^{h,n+1}
-\boldsymbol P_p\boldsymbol e_\xi^{h,n+1}\|_{L^2(\Gamma)}^2
\notag\\
&\qquad\le
\mathcal R_{h,\Gamma}^{n+1}
+\mathfrak R_f^{n+1}(\boldsymbol e_u^{h,n+1})
+\widetilde{\mathfrak R}_p^{n+1}(\delta_t\boldsymbol e_\eta^{h,n+1},
\boldsymbol e_\xi^{h,n+1},e_\phi^{h,n+1})
+a_e(\boldsymbol e_\eta^{h,n+1},\boldsymbol\kappa^{n+1}).
\label{eq:error_energy_reduced}
\end{align}
We now estimate \(\mathcal R_{h,\Gamma}^{n+1}\) term by term, in direct analogy
with the stability residual estimates \eqref{eq:R1_alt}--\eqref{eq:R7_alt},
with \((\boldsymbol r_u,\boldsymbol r_\xi,r_\phi,\boldsymbol u_h,\boldsymbol\xi_h,\phi_h)\)
replaced by \((\boldsymbol r_{e_u},\boldsymbol r_{e_\xi},r_{e_\phi},
\boldsymbol e_u^h,\boldsymbol e_\xi^h,e_\phi^h)\).
Using Cauchy--Schwarz, the discrete trace inequalities
\eqref{eq:disc_trace_fluid}--\eqref{eq:disc_trace_inverse_poro}, and Young's
inequality, we obtain
\begin{align}
\gamma|\langle \boldsymbol P_f\boldsymbol r_{e_\xi}^{n+1},
\boldsymbol P_f\boldsymbol e_u^{h,n+1}\rangle_\Gamma|
&\le
\frac{\rho_p}{32\Delta t}\|\boldsymbol r_{e_\xi}^{n+1}\|_{L^2(\Omega_p)}^2
+C\frac{\gamma^2\Delta t}{\rho_p h^2}\|\boldsymbol e_u^{h,n+1}\|_{L^2(\Omega_f)}^2
+\frac{\mu_f}{8}\|\boldsymbol D(\boldsymbol e_u^{h,n+1})\|_{L^2(\Omega_f)}^2,
\label{eq:Re1}
\\
\gamma|\langle \boldsymbol P_p\boldsymbol r_{e_u}^{n+1},
\boldsymbol P_p\boldsymbol e_\xi^{h,n+1}\rangle_\Gamma|
&\le
\frac{\rho_f}{32\Delta t}\|\boldsymbol r_{e_u}^{n+1}\|_{L^2(\Omega_f)}^2
+C\frac{\gamma^2\Delta t}{\rho_f h^2}\|\boldsymbol e_\xi^{h,n+1}\|_{L^2(\Omega_p)}^2,
\label{eq:Re2}
\\
L|\langle \boldsymbol r_{e_u}^{n+1}\!\cdot\!\boldsymbol n_f,
\boldsymbol e_u^{h,n+1}\!\cdot\!\boldsymbol n_f\rangle_\Gamma|
&\le
\frac{\rho_f}{32\Delta t}\|\boldsymbol r_{e_u}^{n+1}\|_{L^2(\Omega_f)}^2
+\frac{\mu_f}{8}\|\boldsymbol D(\boldsymbol e_u^{h,n+1})\|_{L^2(\Omega_f)}^2
\notag\\
&\quad
+C\frac{L^2\Delta t}{\rho_f h^2}\|\boldsymbol e_u^{h,n+1}\|_{L^2(\Omega_f)}^2
+C\frac{L^2\Delta t}{\rho_f}\|\boldsymbol D(\boldsymbol e_u^{h,n+1})\|_{L^2(\Omega_f)}^2,
\label{eq:Re3}
\\
|\langle r_{e_\phi}^{n+1},\boldsymbol e_u^{h,n+1}\!\cdot\!\boldsymbol n_f\rangle_\Gamma|
&\le
\frac{C_0}{32\Delta t}\|r_{e_\phi}^{n+1}\|_{L^2(\Omega_p)}^2
+\frac{\mu_f}{8}\|\boldsymbol D(\boldsymbol e_u^{h,n+1})\|_{L^2(\Omega_f)}^2
\notag\\
&\quad
+C\frac{\Delta t}{C_0 h^2}\|\boldsymbol e_u^{h,n+1}\|_{L^2(\Omega_f)}^2
+C\frac{\Delta t}{C_0}\|\boldsymbol D(\boldsymbol e_u^{h,n+1})\|_{L^2(\Omega_f)}^2,
\label{eq:Re4}
\\
|\langle \boldsymbol r_{e_\xi}^{n+1}\!\cdot\!\boldsymbol n_p,
\boldsymbol e_\xi^{h,n+1}\!\cdot\!\boldsymbol n_p\rangle_\Gamma|
&\le
\frac{\rho_p}{32\Delta t}\|\boldsymbol r_{e_\xi}^{n+1}\|_{L^2(\Omega_p)}^2
+C\frac{\Delta t}{\rho_p h^2}\|\boldsymbol e_\xi^{h,n+1}\|_{L^2(\Omega_p)}^2,
\label{eq:Re5}
\\
|\langle \boldsymbol r_{e_u}^{n+1}\!\cdot\!\boldsymbol n_p,
e_\phi^{h,n+1}\rangle_\Gamma|
&\le
\frac{\rho_f}{32\Delta t}\|\boldsymbol r_{e_u}^{n+1}\|_{L^2(\Omega_f)}^2
+C\frac{\Delta t}{\rho_f h^2}\|e_\phi^{h,n+1}\|_{L^2(\Omega_p)}^2
+\frac{k_0}{8}\|\nabla e_\phi^{h,n+1}\|_{L^2(\Omega_p)}^2,
\label{eq:Re6}
\\
L^{-1}|\langle r_{e_\phi}^{n+1},e_\phi^{h,n+1}\rangle_\Gamma|
&\le
\frac{C_0}{32\Delta t}\|r_{e_\phi}^{n+1}\|_{L^2(\Omega_p)}^2
+C\frac{\Delta t}{C_0 L^2 h^2}\|e_\phi^{h,n+1}\|_{L^2(\Omega_p)}^2
+\frac{k_0}{8}\|\nabla e_\phi^{h,n+1}\|_{L^2(\Omega_p)}^2.
\label{eq:Re7}
\end{align}
The fractions \(1/32\) and \(1/8\) are chosen so that the BDF2
second-difference terms on the right-hand sides are bounded by the
corresponding terms in \(\mathcal D_{h,\mathrm{bulk}}^{n+1}\), and the
gradient terms are absorbed by the viscous and Darcy dissipation.
Combining \eqref{eq:Re1}--\eqref{eq:Re7} with \eqref{eq:K_coercive} gives
\begin{equation}
\mathcal R_{h,\Gamma}^{n+1}
\le
\frac{1}{4}\mathcal D_{h,\mathrm{bulk}}^{n+1}
+C_1\Delta t\|\boldsymbol D(\boldsymbol e_u^{h,n+1})\|_{L^2(\Omega_f)}^2
+C_2\frac{\Delta t}{h^2}\mathcal E_h^{n+1},
\label{eq:Rh_Gamma_est}
\end{equation}
where \(C_1\) and \(C_2\) are the same constants as in
\eqref{eq:def_C1}--\eqref{eq:def_C2}.
Under the condition \(\Delta t\le\Delta t_0\), the term
\(C_1\Delta t\|\boldsymbol D(\boldsymbol e_u^{h,n+1})\|^2\) is absorbed into
the viscous part of \(\mathcal D_{h,\mathrm{bulk}}^{n+1}\), exactly as in
\eqref{eq:small_dt_absorb_corrected}, so
\begin{equation}
\mathcal R_{h,\Gamma}^{n+1}
\le
\frac{1}{2}\mathcal D_{h,\mathrm{bulk}}^{n+1}
+C_2\frac{\Delta t}{h^2}\mathcal E_h^{n+1}.
\label{eq:Ih_est}
\end{equation}
For the fluid residual, using \eqref{eq:discrete_korn} and
\eqref{eq:div_from_korn}, Cauchy-Schwarz and Young's inequality, we have:
\begin{align}
|\mathfrak R_f^{n+1}(\boldsymbol e_u^{h,n+1})|
&\le
\rho_f\|\delta_t\Pi_u\boldsymbol u^{n+1}-\partial_t\boldsymbol u^{n+1}\|_{L^2\left(\Omega_f\right)}
\|\boldsymbol e_u^{h,n+1}\|_{L^2\left(\Omega_f\right)}
\notag\\
&\quad
+2\mu_f\|\boldsymbol e_u^{I,n+1}\|_{H^1\left(\Omega_f\right)}\,
\|\boldsymbol e_u^{h,n+1}\|_{H^1\left(\Omega_f\right)}
+\|e_p^{I,n+1}\|_{L^2\left(\Omega_f\right)}\,
\|\nabla\!\cdot\!\boldsymbol e_u^{h,n+1}\|_{L^2\left(\Omega_f\right)}
\notag\\
&\quad
+\gamma\|\boldsymbol e_u^{I,n+1}\|_{L^2\left(\Gamma\right)}\,
\|\boldsymbol P_f\boldsymbol e_u^{h,n+1}\|_{L^2\left(\Gamma\right)}
+L\|\boldsymbol e_u^{I,n+1}\!\cdot\!\boldsymbol n_f\|_{L^2\left(\Gamma\right)}\,
\|\boldsymbol e_u^{h,n+1}\!\cdot\!\boldsymbol n_f\|_{L^2\left(\Gamma\right)}
\notag\\
&\quad
+\gamma\|\Pi_\xi\boldsymbol\xi^{\star}-\boldsymbol\xi^{n+1}\|_{L^2\left(\Gamma\right)}
\|\boldsymbol P_f\boldsymbol e_u^{h,n+1}\|_{L^2\left(\Gamma\right)}
\notag\\
&\quad
+\|L(\Pi_u\boldsymbol u^{\star}-\boldsymbol u^{n+1})\!\cdot\!\boldsymbol n_f
-(\Pi_{\phi}\phi^{\star}-\phi^{n+1})\|_{L^2\left(\Gamma\right)}
\|\boldsymbol e_u^{h,n+1}\!\cdot\!\boldsymbol n_f\|_{L^2\left(\Gamma\right)}
\notag\\
&\le
\frac18\mathcal D_{h,\mathrm{bulk}}^{n+1}
+
C\mathcal E_h^{n+1}
+
C\mathcal T_{\mathrm{bulk}}^{n+1}
+
Ch^{-1}\mathcal T_\Gamma^{n+1}.
\label{eq:Rf_test_bound}
\end{align}
For the reduced poroelastic residual \(\widetilde{\mathfrak R}_p^{n+1}\)
defined in \eqref{eq:Rp_tilde_def}, the kinematic defect
\((\boldsymbol\kappa^{n+1},\boldsymbol v_{p,h})_p\) has already been removed
by the cancellation described above.
We first integrate by parts for the term below:
\begin{align}
-\alpha(e_\phi^{I,n+1},\nabla\!\cdot\!\boldsymbol e_\xi^{h,n+1})_p
&=
\alpha(\nabla e_\phi^{I,n+1},\boldsymbol e_\xi^{h,n+1})_p
-\alpha\langle e_\phi^{I,n+1},
\boldsymbol e_\xi^{h,n+1}\!\cdot\!\boldsymbol n_p\rangle_\Gamma.
\label{eq:eI_phi_parts}
\end{align}
And then, we have:
\begin{align}
&|\widetilde{\mathfrak R}_p^{n+1}(\delta_t\boldsymbol e_\eta^{h,n+1},
\boldsymbol e_\xi^{h,n+1},e_\phi^{h,n+1})|
\\
&\le
\rho_p\|\delta_t\Pi_\xi\boldsymbol\xi^{n+1}-\partial_t\boldsymbol\xi^{n+1}\|_{L^2\left(\Omega_p\right)}
\|\boldsymbol e_\xi^{h,n+1}\|_{L^2\left(\Omega_p\right)}
\notag\\
&\quad
+C_0\|\delta_t\Pi_{\phi}\phi^{n+1}-\partial_t\phi^{n+1}\|_{L^2\left(\Omega_p\right)}
\|e_\phi^{h,n+1}\|_{L^2\left(\Omega_p\right)}
\notag\\
&\quad
+\alpha\|\nabla e_\phi^{I,n+1}\|_{L^2\left(\Omega_p\right)}\,
\|\boldsymbol e_\xi^{h,n+1}\|_{L^2\left(\Omega_p\right)}
+\alpha\|e_\phi^{I,n+1}\|_{L^2\left(\Gamma\right)}\,
\|\boldsymbol e_\xi^{h,n+1}\!\cdot\!\boldsymbol n_p\|_{L^2\left(\Gamma\right)}
\notag\\
&\quad
+\alpha\|\nabla\!\cdot\!\boldsymbol e_\xi^{I,n+1}\|_{L^2(\Omega_p)}
\|e_\phi^{h,n+1}\|_{L^2(\Omega_p)}
\notag\\
&\quad
+\gamma\|\boldsymbol e_\xi^{I,n+1}\|_{L^2(\Gamma)}
\|\boldsymbol P_p\boldsymbol e_\xi^{h,n+1}\|_{L^2(\Gamma)}
+\|\boldsymbol e_\xi^{I,n+1}\!\cdot\!\boldsymbol n_p\|_{L^2(\Gamma)}
\|\boldsymbol e_\xi^{h,n+1}\!\cdot\!\boldsymbol n_p\|_{L^2(\Gamma)}
\notag\\
&\quad
+\|e_\phi^{I,n+1}\|_{L^2(\Gamma)}
\|\boldsymbol e_\xi^{h,n+1}\!\cdot\!\boldsymbol n_p\|_{L^2(\Gamma)}
+\|\boldsymbol e_\xi^{I,n+1}\!\cdot\!\boldsymbol n_p\|_{L^2(\Gamma)}
\|e_\phi^{h,n+1}\|_{L^2(\Gamma)}
\notag\\
&\quad
+\gamma\|\Pi_u\boldsymbol u^{\star}-\boldsymbol u^{n+1}\|_{L^2(\Gamma)}
\|\boldsymbol P_p\boldsymbol e_\xi^{h,n+1}\|_{L^2(\Gamma)}
\notag\\
&\quad
+\|\Pi_\xi\boldsymbol\xi^{\star}-\boldsymbol\xi^{n+1}\|_{L^2\left(\Gamma\right)}
\|\boldsymbol e_\xi^{h,n+1}\!\cdot\!\boldsymbol n_p\|_{L^2\left(\Gamma\right)}
\notag\\
&\quad
+\|-(\Pi_u\boldsymbol u^{\star}-\boldsymbol u^{n+1})\!\cdot\!\boldsymbol n_p
+L^{-1}(\Pi_{\phi}\phi^{\star}-\phi^{n+1})\|_{L^2\left(\Gamma\right)}
\|e_\phi^{h,n+1}\|_{L^2\left(\Gamma\right)}
\notag\\
&\le
\frac18\mathcal D_{h,\mathrm{bulk}}^{n+1}
+
C\mathcal E_h^{n+1}
+
C\mathcal T_{\mathrm{bulk}}^{n+1}
+
Ch^{-1}\mathcal T_\Gamma^{n+1}.
\label{eq:Rp_test_bound}
\end{align}
Finally, for the elastic cross term, we have:
\begin{align}
|a_e(\boldsymbol e_\eta^{h,n+1},\boldsymbol\kappa^{n+1})|
&\le
C\|\boldsymbol e_\eta^{h,n+1}\|_{a_e}\,\|\boldsymbol\kappa^{n+1}\|_{H^1\left(\Omega_p\right)}
\le
C\mathcal E_h^{n+1}
+
C\|\boldsymbol\kappa^{n+1}\|_{H^1\left(\Omega_p\right)}^2.
\label{eq:kappa_cross_bound_1}
\end{align}

After inserting \eqref{eq:Ih_est}, \eqref{eq:Rf_test_bound},
\eqref{eq:Rp_test_bound}, and
\eqref{eq:kappa_cross_bound_1} into \eqref{eq:error_energy_reduced}, and
absorbing the remaining dissipative pieces into the left-hand side under
the CFL condition \(\Delta t\le c_\ast h^2\), yields
\eqref{eq:one_step_error}. The BDF2 second-difference terms in
\(\mathcal D_{h,\mathrm{bulk}}^{n+1}\) that were used to absorb the AB2
residuals in \eqref{eq:Re1}--\eqref{eq:Re7} are then dropped as nonnegative
remainders.
\end{proof}

\subsection{A priori error estimate}
\label{subsec:error_residual_bounds_new}

\begin{lemma}[Bounds for the defect indicators]
\label{lem:residual_bounds_new}
Under \eqref{eq:reg_u_error}-\eqref{eq:reg_phi_error}, there exists a constant
\(C>0\), independent of \(h\), \(\Delta t\), and \(n\), such that
\begin{align}
\mathcal T_{\mathrm{bulk}}^{n+1}
&\le
C h^{2k}
+C\Delta t^3\int_{I_n}
\Big(
\|\partial_t^3\boldsymbol u(s)\|_{L^2\left(\Omega_f\right)}^2
+\|\partial_t^3\boldsymbol\xi(s)\|_{L^2\left(\Omega_p\right)}^2
+\|\partial_t^3\boldsymbol\eta(s)\|_{H^1\left(\Omega_p\right)}^2
+\|\partial_t^3\phi(s)\|_{L^2\left(\Omega_p\right)}^2
\Big)\,ds,
\label{eq:defect_bulk_pointwise}
\end{align}
and
\begin{align}
\mathcal T_\Gamma^{n+1}
&\le
C h^{2k+1}
+C\Delta t^3\int_{I_n}
\Big(
\|\partial_t^2\boldsymbol u(s)\|_{H^1\left(\Omega_f\right)}^2
+\|\partial_t^2\boldsymbol\xi(s)\|_{H^1\left(\Omega_p\right)}^2
+\|\partial_t^2\phi(s)\|_{H^1\left(\Omega_p\right)}^2
\Big)\,ds.
\label{eq:defect_Gamma_pointwise}
\end{align}
Consequently, the weighted defect sum satisfies
\begin{equation}
\Delta t\sum_{n=1}^{N-1}\big(\mathcal T_{\mathrm{bulk}}^{n+1}
+h^{-1}\mathcal T_\Gamma^{n+1}\big)
\le
C\big(h^{2k}+\Delta t^4+h^{-1}\Delta t^4\big).
\label{eq:defect_sum_weighted}
\end{equation}
Under the parabolic CFL condition \(\Delta t\le c_\ast h^2\) and \(1\le k\le 3\),
\begin{equation}
h^{-1}\Delta t^4\le c_\ast^4 h^{-1}h^8 = c_\ast^4 h^7 \le C h^{2k},
\label{eq:cfl_absorption}
\end{equation}
so
\begin{equation}
\Delta t\sum_{n=1}^{N-1}\big(\mathcal T_{\mathrm{bulk}}^{n+1}
+h^{-1}\mathcal T_\Gamma^{n+1}\big)
\le
C\big(h^{2k}+\Delta t^4\big).
\label{eq:defect_sum}
\end{equation}
\end{lemma}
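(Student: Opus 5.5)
The plan is to bound each term of $\mathcal T_{\mathrm{bulk}}^{n+1}$ and $\mathcal T_\Gamma^{n+1}$ separately. Every term is split into a spatial projection part and a temporal defect part. The projection part is handled by the approximation estimates \eqref{eq:Fortin_approx}, \eqref{eq:Fortin_trace_approx}, \eqref{eq:proj_p_error}, \eqref{eq:proj_eta}--\eqref{eq:trace_proj_phi}. The temporal part is handled by Lemmas~\ref{lem:bdf2_defect}--\ref{lem:ab2_defect}. The sums are then assembled, and the CFL condition is used at the end.

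\textbf{Bulk defects.} For the BDF2-type terms we write, for example,
\[
\delta_t\Pi_u\boldsymbol u^{n+1}-\partial_t\boldsymbol u^{n+1}=(\Pi_u-I)\delta_t\boldsymbol u^{n+1}+(\delta_t\boldsymbol u^{n+1}-\partial_t\boldsymbol u^{n+1}),
\]
using that $\Pi_u$ commutes with $\delta_t$.
\begin{itemize}
\item The first piece is bounded by $Ch^{k+1}\|\delta_t\boldsymbol u^{n+1}\|_{H^{k+1}}$. As in the proof of Lemma~\ref{lem:kappa_defect}, we have $\|\delta_t\boldsymbol u^{n+1}\|_{H^{k+1}}\le 2\|\partial_t\boldsymbol u\|_{L^\infty(H^{k+1})}$, so this piece is $O(h^{2k+2})\le Ch^{2k}$ after squaring.
\item The second piece is given directly by Lemma~\ref{lem:bdf2_defect} with $X=L^2$.
\item The same argument applies to $\boldsymbol\xi$ with $\Pi_\xi$ and to $\phi$ with $\Pi_\phi$.
\item The terms $\|\boldsymbol e_u^{I,n+1}\|_{H^1}^2$, $\|\nabla e_\phi^{I,n+1}\|^2$, $\|\nabla\!\cdot\!\boldsymbol e_\xi^{I,n+1}\|^2$ and $\|e_p^{I,n+1}\|^2$ are each $Ch^{2k}$. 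This uses the $L^\infty$-in-time regularity \eqref{eq:reg_u_error}--\eqref{eq:reg_phi_error}.
\item $\|\boldsymbol\kappa^{n+1}\|_{H^1}^2$ is exactly \eqref{eq:kappa_bound_H1}.
\end{itemize}
Together these give \eqref{eq:defect_bulk_pointwise}.

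\textbf{Interface defects.} The three pure trace-projection terms are $Ch^{2k+1}$ by the trace estimates. For the AB2 terms we split
\[
\Pi_u\boldsymbol u^\star-\boldsymbol u^{n+1}=(\Pi_u-I)\boldsymbol u^\star+(\boldsymbol u^\star-\boldsymbol u^{n+1}).
\]
\begin{itemize}
\item Since $\boldsymbol u^\star=2\boldsymbol u^n-\boldsymbol u^{n-1}$, the first piece is bounded on $\Gamma$ by $Ch^{k+1/2}\sup_t\|\boldsymbol u\|_{H^{k+1}}$.
\item For the second piece we take the $L^2(\Gamma)$ norm and use the continuous trace inequality $\|w\|_{L^2(\Gamma)}\le C\|w\|_{H^1}$. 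We then apply Lemma~\ref{lem:ab2_defect} with $X=H^1(\Omega_f)^d$, which gives $C\Delta t^3\int_{I_n}\|\partial_t^2\boldsymbol u\|_{H^1}^2$.
\item The same argument applies to $\boldsymbol\xi$ and $\phi$.
\end{itemize}
This yields \eqref{eq:defect_Gamma_pointwise}.

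\textbf{Summation.} We multiply by $\Delta t$ and sum over $n=1,\dots,N-1$.
\begin{itemize}
\item The $h$-parts contribute $\Delta t\,N\,(Ch^{2k}+Ch^{-1}h^{2k+1})\le CTh^{2k}$.
\item The temporal parts contribute $\Delta t^4\sum_n\int_{I_n}(\cdots)$. Since the intervals $I_n$ overlap at most twice, this is bounded by $2\Delta t^4$ times the global $L^2(0,T)$ norms. These are finite by the $H^3(0,T;\cdot)$ assumptions and by $W^{2,\infty}(0,T;H^{k+1})\subset H^2(0,T;H^1)$.
\end{itemize}
The result is \eqref{eq:defect_sum_weighted}, with an extra factor $h^{-1}$ on the interface temporal term.

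\textbf{CFL step.} Using $\Delta t\le c_\ast h^2$ gives $h^{-1}\Delta t^4\le c_\ast^4h^7$. Since $2k\le 6<7$ and $h$ is bounded, $h^7\le Ch^{2k}$, which proves \eqref{eq:cfl_absorption} and hence \eqref{eq:defect_sum}.

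\textbf{Main obstacle.} The step that needs the most care is the interface AB2 defect. The constant must not degrade, so the continuous trace into $H^1$ has to be used rather than an inverse estimate. The regularity needed, $\partial_t^2\in L^2(0,T;H^1)$ for $\boldsymbol u$, $\boldsymbol\xi$ and $\phi$, must also be confirmed from the stated assumptions; it is available because $W^{2,\infty}(0,T;H^{k+1})$ with $k\ge1$ embeds into it.
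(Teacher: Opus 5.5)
Your proposal is correct. Apart from one step it coincides with the paper's proof: the same interface splitting $(\Pi a^{\star}-a^{\star})+(a^{\star}-a^{n+1})$, handled by the trace projection estimates and by Lemma~\ref{lem:ab2_defect} in $H^1$ plus the continuous trace inequality. It also uses the same overlap-multiplicity summation and the same CFL absorption $h^{-1}\Delta t^4\le c_\ast^4h^7\le Ch^{2k}$.

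The difference is in the bulk BDF2 defects. The paper splits $\delta_t\Pi_u\boldsymbol u^{n+1}-\partial_t\boldsymbol u^{n+1}=\Pi_u(\delta_t\boldsymbol u^{n+1}-\partial_t\boldsymbol u^{n+1})+(\Pi_u-I)\partial_t\boldsymbol u^{n+1}$. This needs the assumed $L^2$-stability \eqref{eq:Fortin_L2_stability} of $\Pi_u$, and the paper then treats the $\boldsymbol\xi$ and $\phi$ terms ``similarly''. You put the projection on the other factor, $(\Pi_u-I)\delta_t\boldsymbol u^{n+1}+(\delta_t\boldsymbol u^{n+1}-\partial_t\boldsymbol u^{n+1})$. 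You then only need two ingredients:
\begin{itemize}
\item the approximation estimate applied to $\delta_t\boldsymbol u^{n+1}$, whose $H^{k+1}$ norm is at most $2\|\partial_t\boldsymbol u\|_{L^\infty(I_n;H^{k+1})}$ (the same device used in the proof of Lemma~\ref{lem:kappa_defect});
\item Lemma~\ref{lem:bdf2_defect} with $X=L^2$.
\end{itemize}

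Your ordering requires no stability property of the projections at all. This matters for $\Pi_\xi$ and $\Pi_\phi$: they are Ritz-type projections, no $L^2$-stability is assumed for them, and such projections are not $L^2$-stable in general. If one instead used their $H^1$-stability in the paper's ordering, one would need $\partial_t^3\boldsymbol\xi\in L^2(0,T;H^1)$, which is not part of \eqref{eq:reg_xi_error}. So your decomposition makes the ``similarly'' step rigorous under the stated hypotheses, and it makes \eqref{eq:Fortin_L2_stability} unnecessary for this lemma.

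The only price is that you need $\partial_t$ of the solution in $L^\infty(I_n;H^{k+1})$ rather than at the single time $t_{n+1}$. This is immaterial under the assumed $W^{2,\infty}(0,T;H^{k+1})$ regularity.
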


\begin{proof}
We estimate the bulk and interface defect indicators separately.

For the fluid bulk time defect,
\[
\delta_t\Pi_u\boldsymbol u^{n+1}-\partial_t\boldsymbol u^{n+1}
=
\Pi_u(\delta_t\boldsymbol u^{n+1}-\partial_t\boldsymbol u^{n+1})
+
(\Pi_u\partial_t\boldsymbol u^{n+1}-\partial_t\boldsymbol u^{n+1}).
\]
Using the \(L^2\)-stability of \(\Pi_u\), Lemma~\ref{lem:bdf2_defect},
and \eqref{eq:Fortin_approx}, we obtain
\[
\|\delta_t\Pi_u\boldsymbol u^{n+1}-\partial_t\boldsymbol u^{n+1}\|_{L^2\left(\Omega_f\right)}^2
\le
C h^{2k+2}
+
C\Delta t^3\int_{I_n}\|\partial_t^3\boldsymbol u(s)\|_{L^2\left(\Omega_f\right)}^2\,ds.
\]
The bulk spatial fluid projection terms satisfy
\[
\|\boldsymbol e_u^{I,n+1}\|_{H^1\left(\Omega_f\right)}^2
+\|e_p^{I,n+1}\|_{L^2\left(\Omega_f\right)}^2
\le C h^{2k}.
\]
Similarly,
\[
\|\delta_t\Pi_\xi\boldsymbol\xi^{n+1}-\partial_t\boldsymbol\xi^{n+1}\|_{L^2\left(\Omega_p\right)}^2
\le
C h^{2k+2}
+
C\Delta t^3\int_{I_n}\|\partial_t^3\boldsymbol\xi(s)\|_{L^2\left(\Omega_p\right)}^2\,ds,
\]
and
\[
\|\delta_t\Pi_{\phi}\phi^{n+1}-\partial_t\phi^{n+1}\|_{L^2\left(\Omega_p\right)}^2
\le
C h^{2k+2}
+
C\Delta t^3\int_{I_n}\|\partial_t^3\phi(s)\|_{L^2\left(\Omega_p\right)}^2\,ds.
\]
For the Biot projection errors, we have
\[
\|\nabla e_\phi^{I,n+1}\|_{L^2\left(\Omega_p\right)}^2
+\|\nabla\!\cdot\!\boldsymbol e_\xi^{I,n+1}\|_{L^2\left(\Omega_p\right)}^2
\le
C h^{2k}.
\]
The kinematic defect is controlled by \eqref{eq:kappa_bound_H1}.
Collecting these bounds proves \eqref{eq:defect_bulk_pointwise}.

By the trace approximation estimate \eqref{eq:Fortin_trace_approx},
\[
\|\boldsymbol e_u^{I,n+1}\|_{L^2\left(\Gamma\right)}^2
\le C h^{2k+1}.
\]
Similarly, by the corresponding poroelastic trace estimates,
\[
\|e_\phi^{I,n+1}\|_{L^2\left(\Gamma\right)}^2
\le C h^{2k+1},
\qquad
\|\boldsymbol e_\xi^{I,n+1}\|_{L^2(\Gamma)}^2
\le C h^{2k+1}.
\]

For the AB2 interface defects, we write, for example,
\[
\Pi_\xi\boldsymbol\xi^{\star}-\boldsymbol\xi^{n+1}
=
(\Pi_\xi\boldsymbol\xi^{\star}-\boldsymbol\xi^{\star})
+
(\boldsymbol\xi^{\star}-\boldsymbol\xi^{n+1}).
\]
Since \(\Pi_\xi\), \(\Pi_u\), and \(\Pi_{\phi}\) are linear and time-independent,
they commute with AB2 extrapolation, i.e.\
\(\Pi_\xi\boldsymbol\xi^{\star} = \Pi_\xi(2\boldsymbol\xi^n-\boldsymbol\xi^{n-1})\),
and analogously for the other projections.
Hence the first term in each splitting is bounded by the corresponding
trace projection estimate, and the second by Lemma~\ref{lem:ab2_defect}
in \(H^1\) followed by the trace inequality:
$$
\|\Pi_\xi\boldsymbol\xi^{\star}-\boldsymbol\xi^{n+1}\|_{L^2\left(\Gamma\right)}^2
\le
C h^{2k+1}
+
C\Delta t^3\int_{I_n}\|\partial_t^2\boldsymbol\xi(s)\|_{H^1\left(\Omega_p\right)}^2\,ds.
$$
Applying the same argument to \(\Pi_u\boldsymbol u^{\star}\) and
\(\Pi_{\phi}\phi^{\star}\),
\begin{align*}
\|\Pi_u\boldsymbol u^{\star}-\boldsymbol u^{n+1}\|_{L^2\left(\Gamma\right)}^2
&\le
C h^{2k+1}
+
C\Delta t^3\int_{I_n}\|\partial_t^2\boldsymbol u(s)\|_{H^1\left(\Omega_f\right)}^2\,ds,
\\
\|\Pi_{\phi}\phi^{\star}-\phi^{n+1}\|_{L^2\left(\Gamma\right)}^2
&\le
C h^{2k+1}
+
C\Delta t^3\int_{I_n}\|\partial_t^2\phi(s)\|_{H^1\left(\Omega_p\right)}^2\,ds.
\end{align*}
Collecting these bounds proves \eqref{eq:defect_Gamma_pointwise}.

From \eqref{eq:defect_bulk_pointwise}, multiplying by \(\Delta t\) and summing
over \(n\) (using uniform overlap multiplicity of the intervals \(I_n\)) gives
\[
\Delta t\sum_{n=1}^{N-1}\mathcal T_{\mathrm{bulk}}^{n+1}
\le C(h^{2k}+\Delta t^4).
\]
From \eqref{eq:defect_Gamma_pointwise}, the \(h^{-1}\)-weighted interface
defect satisfies
\[
h^{-1}\mathcal T_\Gamma^{n+1}
\le C h^{2k} + Ch^{-1}\Delta t^3\int_{I_n}
\Big(
\|\partial_t^2\boldsymbol u(s)\|_{H^1(\Omega_f)}^2
+\|\partial_t^2\boldsymbol\xi(s)\|_{H^1(\Omega_p)}^2
+\|\partial_t^2\phi(s)\|_{H^1(\Omega_p)}^2
\Big)\,ds,
\]
so that
\[
\Delta t\sum_{n=1}^{N-1}h^{-1}\mathcal T_\Gamma^{n+1}
\le C(h^{2k}+h^{-1}\Delta t^4).
\]
Combining gives \eqref{eq:defect_sum_weighted}. Under the parabolic CFL
condition \(\Delta t\le c_\ast h^2\) and \(1\le k\le 3\), equation
\eqref{eq:cfl_absorption} shows that \(h^{-1}\Delta t^4=O(h^7)\le Ch^{2k}\),
which proves \eqref{eq:defect_sum}.
\end{proof}

\begin{theorem}[A priori error estimate in bulk energy norms]
\label{thm:error}
Assume \eqref{eq:reg_u_error}-\eqref{eq:reg_phi_error}, the hypotheses of
Theorem~\ref{thm:stability_cfl}, the second-order accurate initialization
condition
\begin{equation}
\mathcal E_h^1\le C\big(h^{2k}+\Delta t^4\big),
\label{eq:init_error_assumption_new}
\end{equation}
and \(1\le k\le 3\).
Then there exists a constant \(C_T>0\), independent of \(h\) and \(\Delta t\),
such that
\begin{align}
&\max_{2\le n\le N}
\Big(
\|\boldsymbol e_u^{h,n}\|_{L^2\left(\Omega_f\right)}^2
+\|\boldsymbol e_\xi^{h,n}\|_{L^2\left(\Omega_p\right)}^2
+\|e_\phi^{h,n}\|_{L^2\left(\Omega_p\right)}^2
+\|\boldsymbol e_\eta^{h,n}\|_{a_e}^2
\Big)
\notag\\
&\qquad
+\Delta t\sum_{n=1}^{N-1}
\Big(
\|\boldsymbol e_u^{h,n+1}\|_{H^1\left(\Omega_f\right)}^2
+\|e_\phi^{h,n+1}\|_{H^1\left(\Omega_p\right)}^2
+\gamma\|\boldsymbol P_f\boldsymbol e_u^{h,n+1}
-\boldsymbol P_p\boldsymbol e_\xi^{h,n+1}\|_{L^2\left(\Gamma\right)}^2
\Big)
\le
C_T\big(h^{2k}+\Delta t^4\big).
\label{eq:eh_error_bound_new}
\end{align}
Consequently, the total errors satisfy
\begin{align}
&\max_{2\le n\le N}
\Big(
\|\boldsymbol u^n-\boldsymbol u_h^n\|_{L^2\left(\Omega_f\right)}
+\|\boldsymbol\xi^n-\boldsymbol\xi_h^n\|_{L^2\left(\Omega_p\right)}
+\|\phi^n-\phi_h^n\|_{L^2\left(\Omega_p\right)}
+\|\boldsymbol\eta^n-\boldsymbol\eta_h^n\|_{a_e}
\Big)
\le
C_T\big(h^{k}+\Delta t^2\big),
\label{eq:total_error_bound_new}
\\
&\text{and}\\
&\left(
\Delta t\sum_{n=1}^{N-1}
\big(
\|\boldsymbol u^{n+1}-\boldsymbol u_h^{n+1}\|_{H^1\left(\Omega_f\right)}^2
+\|\phi^{n+1}-\phi_h^{n+1}\|_{H^1\left(\Omega_p\right)}^2
\big)
\right)^{1/2}
\le
C_T\big(h^{k}+\Delta t^2\big).
\label{eq:bulk_total_error_bound_new}
\end{align}
\end{theorem}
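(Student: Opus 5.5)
The plan is to combine the one-step inequality of Lemma~\ref{lem:one_step_error} with the defect bounds of Lemma~\ref{lem:residual_bounds_new} through a discrete Gronwall argument, and then pass from the discrete errors to the total errors by the triangle inequality and the projection estimates.

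\textbf{Step 1: sum the one-step inequality.} Multiply \eqref{eq:one_step_error} by \(\Delta t\) and sum from \(n=1\) to \(m\), with \(1\le m\le N-1\). This gives
\[
\mathcal E_h^{m+1}+\frac{\Delta t}{4}\sum_{n=1}^{m}\mathcal D_{h,\mathrm{bulk}}^{n+1}
+\gamma\Delta t\sum_{n=1}^{m}\|\boldsymbol P_f\boldsymbol e_u^{h,n+1}-\boldsymbol P_p\boldsymbol e_\xi^{h,n+1}\|_{L^2(\Gamma)}^2
\le \mathcal E_h^1 + C\Delta t\sum_{n=1}^{m}\Big(1+\frac{\Delta t}{h^2}\Big)\mathcal E_h^{n+1} + C(h^{2k}+\Delta t^4).
\]
Here the defect sum is bounded by \eqref{eq:defect_sum}, and \(\mathcal E_h^1\) by \eqref{eq:init_error_assumption_new}.

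\textbf{Step 2: discrete Gronwall.} Under the CFL condition \(\Delta t\le c_\ast h^2\), the factor satisfies \(1+\Delta t/h^2\le 1+c_\ast\), so the growth coefficient is a uniform constant \(C'\). The right-hand side still contains the implicit term \(C'\Delta t\,\mathcal E_h^{m+1}\). I would move it to the left-hand side, assuming \(C'\Delta t\le 1/2\) after possibly shrinking \(\Delta t_0\). The implicit discrete Gronwall lemma then yields
\[
\mathcal E_h^{m+1}+\Delta t\sum_{n\le m}\big(\mathcal D_{h,\mathrm{bulk}}^{n+1}+\text{interface term}\big)\le C e^{CT}(h^{2k}+\Delta t^4).
\]
This step is the main subtlety, for two reasons. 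First, the CFL factor has to stay bounded uniformly; as an alternative to using \(\Delta t/h^2\le c_\ast\) I would keep \(C_2\Delta t^2/h^2\le \Delta t/4\) exactly as in the stability proof. Second, the smallness condition on \(\Delta t\) must be consistent with the theorem's hypotheses, i.e. absorbed into \(\Delta t_0\).

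\textbf{Step 3: extract the norms in \eqref{eq:eh_error_bound_new}.} From \(\mathcal E_h^{n}\ge \frac{\rho_f}{4}\|\boldsymbol e_u^{h,n}\|^2+\dots\) we obtain the maximum-in-time terms for \(2\le n\le N\). In the dissipation sum:
\begin{itemize}
\item \(2\mu_f\|\boldsymbol D(\boldsymbol e_u^{h,n+1})\|^2\) controls \(\|\boldsymbol e_u^{h,n+1}\|_{H^1}^2\) by the discrete Korn inequality \eqref{eq:discrete_korn};
\item \(\|K^{1/2}\nabla e_\phi^{h,n+1}\|^2\ge k_0\|\nabla e_\phi^{h,n+1}\|^2\) controls the \(H^1\) norm of \(e_\phi^{h,n+1}\), using Poincar\'e on \(Q_p\), which vanishes on \(\Sigma_{D,p}\), or simply adding the \(L^2\) part already bounded by \(\mathcal E_h\);
\item the tangential interface mismatch is kept directly.
\end{itemize}

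\textbf{Step 4: total errors.} Write \(\boldsymbol e_u^n=-\boldsymbol e_u^{I,n}+\boldsymbol e_u^{h,n}\), and similarly for the other variables. The projection parts are bounded by \eqref{eq:Fortin_approx}, \eqref{eq:proj_eta}--\eqref{eq:proj_phi} together with the \(L^\infty\)-in-time regularity: they are \(O(h^{k+1})\) in \(L^2\) and \(O(h^k)\) in \(H^1\) and in the \(a_e\)-norm, since \(\|\cdot\|_{a_e}\le C\|\cdot\|_{H^1}\). Taking square roots of \eqref{eq:eh_error_bound_new} and applying the triangle inequality gives \eqref{eq:total_error_bound_new}. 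For \eqref{eq:bulk_total_error_bound_new}, the sum \(\Delta t\sum_n\|\boldsymbol e_u^{I,n+1}\|_{H^1}^2\le CTh^{2k}\), and likewise for \(\phi\), so combining with Step 3 gives the result.
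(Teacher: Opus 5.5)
Your proposal is correct and follows the paper's proof. You sum the one-step inequality of Lemma~\ref{lem:one_step_error}, bound the defects by \eqref{eq:defect_sum} and the initial energy by \eqref{eq:init_error_assumption_new}, use $\Delta t\le c_\ast h^2$ to keep $1+\Delta t/h^2$ uniformly bounded, apply discrete Gronwall, and recover the total errors through the projection splitting, the projection estimates, and discrete Korn. Your two extra points only make explicit what the paper leaves implicit: the implicit Gronwall step needs $C(1+c_\ast)\Delta t\le 1/2$, which may require a smaller $\Delta t_0$, and $\|e_\phi^{h,n+1}\|_{H^1(\Omega_p)}$ comes from the Darcy dissipation plus Poincar\'e, or plus the $L^2$ bound contained in $\mathcal E_h$.
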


\begin{proof}
Starting from \eqref{eq:one_step_error}, summing over \(n=1,\dots,m\), and
using \eqref{eq:defect_sum}, we obtain
\begin{align}
\mathcal E_h^{m+1}
+\frac{\Delta t}{4}\sum_{n=1}^{m}\mathcal D_{h,\mathrm{bulk}}^{n+1}
+\gamma\Delta t\sum_{n=1}^{m}
\|\boldsymbol P_f\boldsymbol e_u^{h,n+1}
-\boldsymbol P_p\boldsymbol e_\xi^{h,n+1}\|_{L^2(\Gamma)}^2\\
\le
\mathcal E_h^1
+
C\Delta t\sum_{n=1}^{m}
\Big(1+\frac{\Delta t}{h^2}\Big)\mathcal E_h^{n+1}
+
C\big(h^{2k}+\Delta t^4\big).
\end{align}
Under the CFL condition \(\Delta t\le c_\ast h^2\), the factor
\(1+\Delta t/h^2\) is uniformly bounded, so the discrete Gronwall yields \eqref{eq:eh_error_bound_new}.

To derive the total error estimate, we use the decompositions
\eqref{eq:errsplit_u_new}-\eqref{eq:errsplit_phi_new}. For instance,
\[
\|\boldsymbol u^n-\boldsymbol u_h^n\|_{L^2\left(\Omega_f\right)}
\le
\|\boldsymbol e_u^{I,n}\|_{L^2\left(\Omega_f\right)}
+\|\boldsymbol e_u^{h,n}\|_{L^2\left(\Omega_f\right)},
\]
and similarly for \(\boldsymbol\xi\), \(\phi\), and \(\boldsymbol\eta\). The approximation
estimates \eqref{eq:Fortin_approx}, \eqref{eq:proj_eta}-\eqref{eq:proj_phi}
yield
\[
\|\boldsymbol e_u^{I,n}\|_{L^2\left(\Omega_f\right)}
+\|\boldsymbol e_\xi^{I,n}\|_{L^2\left(\Omega_p\right)}
+\|e_\phi^{I,n}\|_{L^2\left(\Omega_p\right)}
+\|\boldsymbol e_\eta^{I,n}\|_{a_e}
\le C h^k.
\]
Therefore, \eqref{eq:total_error_bound_new} follows from
\eqref{eq:eh_error_bound_new}. Moreover, by \eqref{eq:discrete_korn},
\[
\|\boldsymbol e_u^{h,n+1}\|_{H^1\left(\Omega_f\right)}
\le C_K\|\boldsymbol D(\boldsymbol e_u^{h,n+1})\|_{L^2\left(\Omega_f\right)},
\]
and thus the estimate \eqref{eq:bulk_total_error_bound_new} follows from
\eqref{eq:eh_error_bound_new}, \eqref{eq:Fortin_approx}, and
\eqref{eq:proj_phi}.
\end{proof}

\begin{remark}
\label{rem:pressure_omitted}
The estimate above is written in the bulk energy norms of the partitioned
scheme, together with the tangential interface mismatch. A separate optimal \(L^2(\Omega_f)\) estimate for the fluid pressure
requires an additional argument controlling
\(\delta_t\boldsymbol e_u^{h,n+1}\) in a norm compatible with the discrete
inf-sup condition. A sharp standalone fluid-pressure bound is possible in principle, but it needs extra work beyond the current proof, so we are not claiming it here.
\end{remark}

\begin{remark}[On the polynomial degree and interface trace losses]
\label{rem:trace_loss}
The estimates for \(\mathfrak R_f^{n+1}\) and \(\mathfrak R_p^{n+1}\) involve
interface consistency terms of the form
\(\|\Pi a^{\star}-a^{n+1}\|_{L^2(\Gamma)}\) paired with discrete variables
whose normal traces are controlled via the inverse trace inequality
\eqref{eq:disc_trace_inverse_poro}. After Cauchy--Schwarz and Young's
inequality, these pairings introduce weighted defect terms of the form
\(h^{-1}\|\Pi a^{\star}-a^{n+1}\|_{L^2(\Gamma)}^2\). Using the
approximation estimate \(\|\Pi a - a\|_{L^2(\Gamma)}^2 = O(h^{2k+1})\) and
the AB2 consistency bound \(O(\Delta t^4)\), the weighted terms contribute
\(O(h^{2k} + h^{-1}\Delta t^4)\). Under the parabolic CFL condition
\(\Delta t \lesssim h^2\), the second term is \(O(h^7)\). For the finite
element spaces used in this work (in particular Taylor--Hood
\(\mathbb P_2/\mathbb P_1\) with \(k=2\)), this contribution is absorbed
into the spatial error term. For higher-order spaces \(k \ge 4\), the term
\(h^{-1}\Delta t^4\) would require a separate treatment or a modified
statement of the estimate.
\end{remark}

\begin{remark}[On the fluid projection assumptions]
\label{rem:Fortin_Korn}
The error analysis in this section is conditional on two standard properties of
the discrete fluid space. First, the Stokes finite element pair
\((\boldsymbol V_{f,h},Q_{f,h})\) is assumed to admit a Fortin
interpolation/projection operator \(\Pi_u\) satisfying
\eqref{eq:Fortin_property}, \eqref{eq:Fortin_approx}, and
\eqref{eq:Fortin_trace_approx}. Second, the discrete fluid velocity space is
assumed to satisfy the discrete Korn inequality \eqref{eq:discrete_korn}. Under
these assumptions, the fluid residual terms can be controlled in a way that is
compatible with the natural energy estimate from
Theorem~\ref{thm:stability_cfl}.
Moreover, if the fluid discretization uses a standard conforming Stokes-stable pair such as Taylor–Hood or MINI on shape-regular meshes, then the Fortin projection and discrete Korn inequality required in Section 4 are already available in the literature \cite{Brenner-Scott:book, ScottZhang1990}.
\end{remark}

\section{Numerical validation}
\label{sec:numerics}

\subsection{Benchmark problem}
In this example, we considered a benchmark problem with manufactured solutions to examine the rates of convergence in time and space of the explicit splitting scheme. We solve the time-dependent Stokes--Biot system with added external forcing terms, {\sunny{given by the following}}:
$$
\left\{
\begin{array}{ll}
    \rho_f \partial_t \boldsymbol{u}=\nabla \cdot \boldsymbol{\sigma}_f\left(\boldsymbol{u}, p\right)+\boldsymbol{F}_f & \text { in } \Omega_f \times(0, T), \\
    \nabla \cdot \boldsymbol{u}=g_f & \text { in } \Omega_f \times(0, T), \\
    \partial_t \boldsymbol{\eta}=\boldsymbol{\xi} & \text { in } \Omega_p \times(0, T), \\
    \rho_p \partial_t \boldsymbol{\xi}=\nabla \cdot \boldsymbol{\sigma}_p\left(\boldsymbol{\eta}, \phi\right)+\boldsymbol{F}_e & \text { in } \Omega_p \times(0, T), \\
    \boldsymbol{u_p}=-\mathbb K\nabla \phi & \text { in } \Omega_p \times(0, T), \\
    C_0 \partial_t \phi+\alpha \nabla \cdot \boldsymbol{\xi}-\nabla \cdot (\mathbb K\nabla\phi)=F_d & \text { in } \Omega_p \times(0, T) .
\end{array}
\right.
$$
The FPSI problem is defined within {\sunny{the rectangular domain $\Omega = (0,1) \times (-1,1)$, where the fluid domain occupies the upper half of $\Omega$, i.e., $\Omega_f = (0,1) \times (0,1)$, and the solid domain occupies the lower half of $\Omega$, i.e., $\Omega_p = (0,1) \times (-1,0)$. The exact solution of this problem is given by:}}
\begin{align*}
& \boldsymbol{u}_{exact}=\pi \cos (\pi t)\left[\begin{array}{c}
-3 x+\cos (y) \\
y+1
\end{array}\right], \quad&&p_{exact}=e^t \sin (\pi x) \cos \left(\frac{\pi y}{2}\right)+2 \pi \cos (\pi t), \\
& \boldsymbol{\eta}_{exact}=\sin (\pi t)\left[\begin{array}{c}
-3 x+\cos (y) \\
y+1
\end{array}\right], \quad&&\phi_{exact}=e^t \sin (\pi x) \cos \left(\frac{\pi y}{2}\right).
\end{align*}
From the exact solutions, we can retrieve the corresponding forcing terms of $\boldsymbol{F}_f, g_f, \boldsymbol{F}_e$, and $F_d$:
\begin{equation}
\begin{aligned}
&\boldsymbol{F}_f=\left[\begin{array}{l}
\begin{aligned}
& \rho_f \pi^2 \sin (\pi t)(3 x-\cos y)+\pi e^t \cos (\pi x) \cos \left(\frac{\pi y}{2}\right)+\mu_f \pi \cos (\pi t) \cos y \\
& -\rho_f \pi^2 \sin (\pi t)(y+1)-\frac{\pi}{2} e^t \sin (\pi x) \sin \left(\frac{\pi y}{2}\right) 
\end{aligned}
\end{array}\right],  \\
&g_f=-2 \pi \cos (\pi t),\\
&\boldsymbol{F}_e=\left[\begin{array}{l}
\begin{aligned}
& \rho_p \pi^2 \sin(\pi t)(3 x-\cos y)+\alpha \pi e^t \cos (\pi x) \cos \left(\frac{\pi y}{2}\right)+\mu_p \sin(\pi t) \cos y \\
& -\rho_p \pi^2 \sin(\pi t)(y+1)-\alpha \frac{\pi}{2} e^t \sin (\pi x) \sin \left(\frac{\pi y}{2}\right) 
\end{aligned} 
\end{array}\right],  \\
&{F}_d=C_0 e^t \sin (\pi x) \cos \left(\frac{\pi y}{2}\right)-2 \alpha \pi \cos (\pi t)+\frac{5}{4} \pi^2 e^t \sin (\pi x) \cos \left(\frac{\pi y}{2}\right).
\end{aligned}
\end{equation}
{\sunny{We set the physical parameters all equal to one:}}
$$
\rho_p=\mu_p=\lambda_p=\alpha=C_0=\gamma=\rho_f=\mu_f=1, \quad \mathbb{K}=\mathbf{I}.
$$
{\sunny{Finite elements are used for spatial discretization. In particular, for the  fluid we use  Taylor-Hood elements $\mathbb{P}_2-\mathbb{P}_1$ for $\left(\boldsymbol{u}, p\right)$.}} For the Biot variables, we employ continuous \(\mathbb P_2\) elements for
the solid displacement \(\boldsymbol\eta\), continuous \(\mathbb P_2\)
elements for the structure velocity \(\boldsymbol\xi\), and continuous
\(\mathbb P_2\) elements for the pore pressure \(\phi\). The system is {\sunny{solved on the time interval $(0,T) = (0,0.1)$, and we evaluate the numerical error at $T = 0.1$}}. To compute convergence rates, we define the final time errors for {\sunny{structure displacement and velocity $(\boldsymbol{\eta},\boldsymbol{\xi})$, Darcy pressure $\phi$, and fluid velocity and pressure $\left(\boldsymbol{u}, p\right)$, as follows:}}
$$
\begin{aligned}
e_{\boldsymbol{\eta}}  & :=\left\|\boldsymbol{\eta}_h(T)-\boldsymbol{\eta}_{\text {exact }}(T)\right\|_{L^2\left(\Omega_p\right)}, \\
e_{\boldsymbol{\xi}}  & :=\left\|\boldsymbol{\xi}_h(T)-\boldsymbol{\xi}_{\text {exact }}(T)\right\|_{L^2\left(\Omega_p\right)} \\
e_\phi & :=\left\|\phi_h(T)-\phi_{\text {exact }}(T)\right\|_{L^2\left(\Omega_p\right)} \\
e_{\boldsymbol{u}} & :=\left\|\boldsymbol{u}_{f, h}(T)-\boldsymbol{u}_{\text {exact }}(T)\right\|_{L^2\left(\Omega_f\right)} \\
e_p & :=\left\|p_{f, h}(T)-p_{\text {exact}}(T)\right\|_{L^2\left(\Omega_f\right)} .
\end{aligned}
$$

\begin{table}[htbp]
\centering
\caption{Temporal errors for the fixed-domain Stokes-Biot scheme at $T=0.1$. The number in parentheses denotes the convergence rate.}
\label{tab:temporal_error_rate}
\renewcommand{\arraystretch}{1.15}
\begin{tabular}{cccccc}
\hline
$\Delta t$  & $u$ & $p$ & $\eta$ & $\xi$ & $\phi$ \\
\hline
$2.50\mathrm{e}-02$ 
& \begin{tabular}[c]{@{}c@{}}$5.99\mathrm{e}-04$\\ {--}\end{tabular}
& \begin{tabular}[c]{@{}c@{}}$2.60\mathrm{e}-02$\\ {--}\end{tabular}
& \begin{tabular}[c]{@{}c@{}}$3.27\mathrm{e}-03$\\ {--}\end{tabular}
& \begin{tabular}[c]{@{}c@{}}$1.92\mathrm{e}-02$\\ {--}\end{tabular}
& \begin{tabular}[c]{@{}c@{}}$1.09\mathrm{e}-03$\\ {--}\end{tabular}
\\[2mm]

$1.25\mathrm{e}-02$
& \begin{tabular}[c]{@{}c@{}}$1.50\mathrm{e}-04$\\ $(1.99)$\end{tabular}
& \begin{tabular}[c]{@{}c@{}}$6.50\mathrm{e}-03$\\ $(2.00)$\end{tabular}
& \begin{tabular}[c]{@{}c@{}}$8.26\mathrm{e}-04$\\ $(1.99)$\end{tabular}
& \begin{tabular}[c]{@{}c@{}}$5.03\mathrm{e}-03$\\ $(1.93)$\end{tabular}
& \begin{tabular}[c]{@{}c@{}}$2.80\mathrm{e}-04$\\ $(1.97)$\end{tabular}
\\[2mm]

$6.25\mathrm{e}-03$ 
& \begin{tabular}[c]{@{}c@{}}$3.80\mathrm{e}-05$\\ $(1.99)$\end{tabular}
& \begin{tabular}[c]{@{}c@{}}$1.62\mathrm{e}-03$\\ $(2.00)$\end{tabular}
& \begin{tabular}[c]{@{}c@{}}$2.07\mathrm{e}-04$\\ $(2.00)$\end{tabular}
& \begin{tabular}[c]{@{}c@{}}$1.28\mathrm{e}-03$\\ $(1.97)$\end{tabular}
& \begin{tabular}[c]{@{}c@{}}$7.03\mathrm{e}-05$\\ $(1.99)$\end{tabular}
\\[2mm]

$3.13\mathrm{e}-03$ 
& \begin{tabular}[c]{@{}c@{}}$9.59\mathrm{e}-06$\\ $(1.99)$\end{tabular}
& \begin{tabular}[c]{@{}c@{}}$4.06\mathrm{e}-04$\\ $(2.00)$\end{tabular}
& \begin{tabular}[c]{@{}c@{}}$5.19\mathrm{e}-05$\\ $(2.00)$\end{tabular}
& \begin{tabular}[c]{@{}c@{}}$3.23\mathrm{e}-04$\\ $(1.99)$\end{tabular}
& \begin{tabular}[c]{@{}c@{}}$1.76\mathrm{e}-05$\\ $(2.00)$\end{tabular}
\\
\hline
\end{tabular}
\end{table}

Table~\ref{tab:temporal_error_rate} shows the temporal convergence behavior of the proposed fixed-domain Stokes-Biot scheme at $T=0.1$. The errors in the fluid velocity $u$ and pressure $p$ decrease with rates very close to $2$, which is fully consistent with the second-order BDF2-AB2 time discretization. The displacement $\eta$ and structure velocity $\xi$ also display clear second-order trends, particularly on the finer time-step levels where the observed rates move closer to $2$. For the pore pressure $\phi$, the computed rates are also second order. This indicates that the method achieves the expected second-order accuracy in time overall. Small deviations from the ideal rate may be caused by the parallel splitting strategy itself. Since the interface coupling is enforced through extrapolated data from previous time steps, the scheme introduces a splitting residual at the fluid–poroelastic interface. This residual can affect the observed temporal rates, especially for variables that are more sensitive to the interface transmission conditions.

\begin{table}[htbp]
\centering
\caption{Spatial errors for the fixed-domain Stokes--Biot scheme at $T=0.1$ with fixed $\Delta t=10^{-4}$. The number in parentheses denotes the observed convergence rate.}
\label{tab:spatial_error_rate}
\renewcommand{\arraystretch}{1.15}
\begin{tabular}{cccccc}
\hline
$h$  & $u$ & $p$ & $\eta$ & $\xi$ & $\phi$ \\
\hline
$1.41\mathrm{e}-01$ 
& \begin{tabular}[c]{@{}c@{}}$5.48\mathrm{e}-05$\\ {--}\end{tabular}
& \begin{tabular}[c]{@{}c@{}}$3.12\mathrm{e}-03$\\ {--}\end{tabular}
& \begin{tabular}[c]{@{}c@{}}$1.35\mathrm{e}-06$\\ {--}\end{tabular}
& \begin{tabular}[c]{@{}c@{}}$5.94\mathrm{e}-05$\\ {--}\end{tabular}
& \begin{tabular}[c]{@{}c@{}}$1.47\mathrm{e}-04$\\ {--}\end{tabular}
\\[2mm]

$7.07\mathrm{e}-02$
& \begin{tabular}[c]{@{}c@{}}$7.40\mathrm{e}-06$\\ $(2.89)$\end{tabular}
& \begin{tabular}[c]{@{}c@{}}$7.56\mathrm{e}-04$\\ $(2.05)$\end{tabular}
& \begin{tabular}[c]{@{}c@{}}$1.35\mathrm{e}-07$\\ $(3.33)$\end{tabular}
& \begin{tabular}[c]{@{}c@{}}$7.75\mathrm{e}-06$\\ $(2.94)$\end{tabular}
& \begin{tabular}[c]{@{}c@{}}$1.85\mathrm{e}-05$\\ $(2.99)$\end{tabular}
\\[2mm]

$3.54\mathrm{e}-02$ 
& \begin{tabular}[c]{@{}c@{}}$9.84\mathrm{e}-07$\\ $(2.91)$\end{tabular}
& \begin{tabular}[c]{@{}c@{}}$1.87\mathrm{e}-04$\\ $(2.01)$\end{tabular}
& \begin{tabular}[c]{@{}c@{}}$1.75\mathrm{e}-08$\\ $(2.95)$\end{tabular}
& \begin{tabular}[c]{@{}c@{}}$9.85\mathrm{e}-07$\\ $(2.98)$\end{tabular}
& \begin{tabular}[c]{@{}c@{}}$2.31\mathrm{e}-06$\\ $(3.00)$\end{tabular}
\\[2mm]

$1.77\mathrm{e}-02$ 
& \begin{tabular}[c]{@{}c@{}}$1.29\mathrm{e}-07$\\ $(2.93)$\end{tabular}
& \begin{tabular}[c]{@{}c@{}}$4.63\mathrm{e}-05$\\ $(2.01)$\end{tabular}
& \begin{tabular}[c]{@{}c@{}}$8.79\mathrm{e}-09$\\ $(0.99)$\end{tabular}
& \begin{tabular}[c]{@{}c@{}}$1.86\mathrm{e}-07$\\ $(2.41)$\end{tabular}
& \begin{tabular}[c]{@{}c@{}}$2.90\mathrm{e}-07$\\ $(3.00)$\end{tabular}
\\
\hline
\end{tabular}
\end{table}

To study spatial convergence, we ran the fixed-domain Stokes-Biot scheme on a sequence of meshes with $10, 20, 40$, and $80$ elements in each spatial direction. The final time was set to $T=0.1$, while the time step was fixed at $\Delta t=10^{-4}$ for all runs. As reported in Table~\ref{tab:spatial_error_rate}, the errors in the fluid velocity $u$ and fluid pressure $p$, structure displacement $\eta$, structure velocity $\xi$, and pore pressure $\phi$ all decrease steadily as the mesh is refined. For all variables, we observed the expected optimal convergence rates, indicating that the spatial discretization performs well for the coupled problem. In particular, the fluid variables behave consistently with the Taylor-Hood $P 2-P 1$ approximation used in the Stokes subproblem, while the poroelastic variables show nearly third-order decay. The convergence rate for the displacement error deteriorates on the last refinement level because the temporal error has reached approximately $O(10^{-8})$.

\subsection{2D blood flow in a moving domain}

This example is outside the scope of the stability and error analysis proved in
Sections 3--4. It is included only as a robustness and applicability test of the
partitioned strategy in a more realistic moving-domain Navier--Stokes--Biot
setting.

We next consider another benchmark problem motivated by blood flow in a straight artery. Different from the linear problem (Stokes--Biot) in a fixed domain studied in \cite{Parrow2026R, he2026lockfree}, we consider the Navier Stokes--Biot problem in a moving domain.

Let $R>0$ denote the lumen radius, $L>0$ the vessel length, and $r_p>0$ the thickness of the poroelastic wall. The fluid reference domain is $\Omega_f=(0,L)\times(-R,R)$, while the poroelastic wall region is given by $\Omega_p=(0,L)\times(-R-r_p, R)\cup(0,L)\times(R,R+r_p)$.
The moving fluid domain is denoted by $\Omega_f(t)$ and is obtained through an
ALE map
\[
\mathcal{A}_t:\widehat{\Omega}_f\rightarrow \Omega_f(t),
\qquad
\mathcal{A}_t(\widehat{\mathbf{x}})
=
\widehat{\mathbf{x}}+\boldsymbol{\eta}_{\mathrm{ALE}}(\widehat{\mathbf{x}},t),
\]
where $\boldsymbol{\eta}_{\mathrm{ALE}}$ is the ALE mesh displacement. The mesh
velocity is defined by
\[
\boldsymbol{w}
=
\partial_t \mathcal{A}_t
=
\partial_t \boldsymbol{\eta}_{\mathrm{ALE}} .
\]
In the time-discrete implementation, the mesh velocity is approximated by
\[
\boldsymbol{w}^{n+1}
=
\frac{
\boldsymbol{\eta}_{\mathrm{ALE}}^{n+1}
-
\boldsymbol{\eta}_{\mathrm{ALE}}^{n}
}{\Delta t}.
\]
The fluid inlet and outlet boundaries are defined as $$\Gamma_f^{\mathrm{in}}=\{(0,y)\mid -R<y<R\}, \qquad \Gamma_f^{\mathrm{out}}=\{(L,y)\mid -R<y<R\}.$$ 
The corresponding inlet and outlet boundaries of the poroelastic wall are 
\begin{align}
&\Gamma_p^{\mathrm{in}}=\{(0,y)\mid -R-r_p<y<-R \text{ or } R<y<R+r_p\}, \nonumber\\
&\Gamma_p^{\mathrm{out}}=\{(L,y)\mid -R-r_p<y<-R \text{ or } R<y<R+r_p\}.\nonumber
\end{align} 
The external boundary of the wall is 
$$
\Gamma_p^{\mathrm{ext}}=\{(x,y)\mid 0<x<L,\; y=-R-r_p \text{ or } y=R+r_p\},
$$
and the fluid--poroelastic interfaces are located at 
$$
\Gamma_{fp}^{-}=(0,L)\times\{-R\}, \qquad \Gamma_{fp}^{+}=(0,L)\times\{R\}.
$$

In the lumen, the blood flow is modeled by the Navier Stokes equation on the moving domain $\Omega_f(t)$:
\begin{equation}
\rho_f
\left[
\left.\frac{\partial \boldsymbol{u}_f}{\partial t}\right|_{\hat{x}}
+
\left(
(\boldsymbol{u}_f-\boldsymbol{w})\cdot\nabla
\right)\boldsymbol{u}_f
\right]
-
\nabla\cdot\boldsymbol{\sigma}_f(\boldsymbol{u}_f,p_f)
=
\mathbf{f}_f
\qquad
\text{in } \Omega_f(t)\times(0,T].
\label{eq:benchmark_ALE_NS_momentum}
\end{equation}
Here, the time derivative is taken with respect to the fixed reference coordinate
$\widehat{x}$, and the convective velocity is the relative velocity
$\boldsymbol{u}_f-\boldsymbol{w}$.
In our numerical implementation, the fluid equation is not solved by deforming
the computational mesh during each time step. Instead, the Navier--Stokes equations on the moving physical domain are pulled back to the fixed reference domain $\widehat{\Omega}_f$.
Let
\[
F
=
\widehat{\nabla}\mathcal{A}_t
=
I+\widehat{\nabla}\boldsymbol{\eta}_{\mathrm{ALE}},
\qquad
J=\det F,
\]
where $\widehat{\nabla}$ denotes differentiation with respect to the reference
coordinate. Then physical-domain gradients are computed from reference-domain
gradients by
\[
\nabla_{\mathbf{x}}\boldsymbol{u}
=
\widehat{\nabla}\boldsymbol{u}\,F^{-1},
\]
and the volume integrals over the moving domain are transformed by
\[
\int_{\Omega_f(t)} g(\mathbf{x})\,d\mathbf{x}
=
\int_{\widehat{\Omega}_f}
g(\mathcal{A}_t(\widehat{\mathbf{x}}))\,J\,d\widehat{\mathbf{x}}.
\]
Therefore, the fluid weak form is assembled on the reference domain. 

The arterial wall is modeled by the Biot system. To represent the circumferential recoil that appears in the axisymmetric reduction of a three-dimensional cylindrical wall, the momentum equation is augmented by a spring term $\beta \boldsymbol{\eta}$ \cite{Parrow2026R}. Hence the structure equation takes the form
\begin{equation}
\rho_p \partial_{tt}\boldsymbol{\eta}
-
\nabla\cdot \boldsymbol{\sigma}_p(\boldsymbol{\eta},p_p)
+
\beta \boldsymbol{\eta}
=
\boldsymbol{f}_p
\qquad
\text{in } \Omega_p\times(0,T].
\label{eq:benchmark_structure_momentum}
\end{equation}
Under the assumption of axial symmetry, the two-dimensional problem is interpreted as the meridional section of a three-dimensional cylindrical tube. Along the horizontal symmetry axis, denoted by $\Gamma_f^{\mathrm{sym}}$, the symmetry condition
\begin{equation}
\boldsymbol{u}_f\cdot \mathbf{n}_f = 0
\qquad \text{on } \Gamma_f^{\mathrm{sym}}\times(0,T]
\end{equation}
is imposed. 

The benchmark employs the following boundary conditions:

\paragraph{Poroelastic displacement boundary condition}
The wall displacement is fixed at the inlet and outlet:
\begin{equation}
\boldsymbol{\eta}=\mathbf{0}
\qquad
\text{on }
\left(\Gamma_p^{\mathrm{in}}\cup \Gamma_p^{\mathrm{out}}\right)\times(0,T].
\end{equation}
Since the structure velocity satisfies $\boldsymbol{\xi}=\partial_t\boldsymbol{\eta}$, we also impose
\begin{equation}
\boldsymbol{\xi}=\mathbf{0}
\qquad
\text{on }
\left(\Gamma_p^{\mathrm{in}}\cup \Gamma_p^{\mathrm{out}}\right)\times(0,T].
\end{equation}

\paragraph{Poroelastic external-wall stress condition}
On the external wall boundary, the poroelastic wall is traction-free:
\begin{equation}
\boldsymbol{\sigma}_p\mathbf{n}_p=\mathbf{0}
\qquad
\text{on }
\Gamma_p^{\mathrm{ext}}\times(0,T].
\end{equation}

\paragraph{Poroelastic no-flux condition}
For the pore fluid, no normal Darcy flux is imposed through the inlet, outlet, and external wall boundaries:
\begin{equation}
\boldsymbol{q}\cdot\mathbf{n}_p=K\nabla\phi\cdot\mathbf{n}_p=0
\qquad
\text{on }
\left(\Gamma_p^{\mathrm{in}}\cup\Gamma_p^{\mathrm{out}}\cup\Gamma_p^{\mathrm{ext}}\right)\times(0,T].
\end{equation}

\paragraph{Fluid symmetry condition}
Along the symmetry boundary of the half-domain fluid problem, we impose
\begin{equation}
\boldsymbol{u}_f\cdot\mathbf{n}_f=0
\qquad
\text{on }
\Gamma_f^{\mathrm{sym}}\times(0,T].
\end{equation}
For the present half-domain geometry, this is implemented as $u_{f,y}=0$ on $y=0$; the corresponding tangential stress condition is treated naturally.

\paragraph{Fluid inlet condition}
A time-dependent pressure pulse is prescribed at the inlet:
\begin{equation}
\boldsymbol{\sigma}_f \mathbf{n}_f
=
- p_{\mathrm{in}}(t)\mathbf{n}_f
\qquad
\text{on }
\Gamma_f^{\mathrm{in}}\times(0,T],
\end{equation}
where
\begin{equation}
p_{\mathrm{in}}(t)=
\begin{cases}
\dfrac{P_{\max}}{2}
\left[
1-\cos\left(\dfrac{2\pi t}{T_{\max}}\right)
\right],
& t\le T_{\max},\\[8pt]
0, & t>T_{\max},
\end{cases}
\label{eq:benchmark_pin}
\end{equation}
with
\[
P_{\max}=13334\ \mathrm{dyn}/\mathrm{cm}^2,
\qquad
T_{\max}=0.003\ \mathrm{s}.
\]

\paragraph{Fluid outlet condition}
At the outlet, the normal component of the fluid traction is set to zero:
\begin{equation}
\left(\boldsymbol{\sigma}_f \mathbf{n}_f\right)\cdot \mathbf{n}_f=0
\qquad
\text{on }
\Gamma_f^{\mathrm{out}}\times(0,T].
\end{equation}
This condition is imposed as a natural boundary condition in the weak formulation.

The ALE displacement is obtained by extending the wall displacement from the
fluid-structure interface into the fluid mesh. In the present implementation,
this extension is computed by solving the harmonic extension:
\begin{equation}
-\Delta \boldsymbol{\eta}_{\mathrm{ALE}}
=
\mathbf{0}
\qquad
\text{in } \widehat{\Omega}_f,
\label{eq:benchmark_ALE_extension}
\end{equation}
with interface condition
\begin{equation}
\boldsymbol{\eta}_{\mathrm{ALE}}
=
\boldsymbol{\eta}
\qquad
\text{on } \Gamma_{fp}^{-}\cup\Gamma_{fp}^{+}.
\label{eq:benchmark_ALE_interface_bc}
\end{equation}
On the inlet and outlet, the ALE displacement is fixed,
\begin{equation}
\boldsymbol{\eta}_{\mathrm{ALE}}=\mathbf{0}
\qquad
\text{on } \Gamma_f^{\mathrm{in}}\cup\Gamma_f^{\mathrm{out}},
\label{eq:benchmark_ALE_inout_bc}
\end{equation}
and on the symmetry boundary the normal component of the mesh displacement is
constrained,
\begin{equation}
\boldsymbol{\eta}_{\mathrm{ALE}}\cdot\mathbf{n}_f=0
\qquad
\text{on } \Gamma_f^{\mathrm{sym}}.
\label{eq:benchmark_ALE_sym_bc}
\end{equation}
In our second-order time-stepping scheme, the interface displacement used to move
the fluid mesh is extrapolated explicitly.
Thus, the ALE boundary condition at the interface is imposed as
\[
\boldsymbol{\eta}_{\mathrm{ALE}}^{n+1}
=
2\boldsymbol{\eta}_{\Gamma}^{n}
-
\boldsymbol{\eta}_{\Gamma}^{n-1}
\qquad
\text{on } \Gamma_{fp}^{-}\cup\Gamma_{fp}^{+}.
\]
After solving the ALE extension problem, the deformation gradient $
F^{n+1}
=
I+\widehat{\nabla}\boldsymbol{\eta}_{\mathrm{ALE}}^{n+1}
$
is computed on the reference fluid mesh. The fluid weak form is then assembled
using
\[
J^{n+1}=\det F^{n+1},
\qquad
(F^{n+1})^{-1}.
\]
The mesh velocity
\[
\boldsymbol{w}^{n+1}
=
\frac{
\boldsymbol{\eta}_{\mathrm{ALE}}^{n+1}
-
\boldsymbol{\eta}_{\mathrm{ALE}}^{n}
}{\Delta t}
\]
enters the transformed convection term through the relative velocity
$\boldsymbol{u}_f-\boldsymbol{w}$. In this way, the effect of the moving physical
fluid domain is included without solving the Navier--Stokes system directly on a
permanently moved mesh. All the physical parameters used in the benchmark are listed in Table~\ref{tab:physical_parameters_benchmark} \cite{Parrow2026R, he2026lockfree}.

\begin{table}[h!]
\centering
\caption{Physical parameters for the 2D blood flow benchmark.}
\label{tab:physical_parameters_benchmark}
\begin{tabular}{l|l|l|l}
\hline
Parameter & Symbol & Units & Reference value \\
\hline
Radius & $R$ & cm & $0.5$ \\
Length & $L$ & cm & $6$ \\
Poroelastic wall density & $\rho_p$ & $\mathrm{g}/\mathrm{cm}^3$ & $1.1$ \\
Fluid density & $\rho_f$ & $\mathrm{g}/\mathrm{cm}^3$ & $1.0$ \\
Dynamic viscosity & $\mu_f$ & $\mathrm{g}/(\mathrm{cm}\cdot \mathrm{s})$ & $0.035$ \\
Spring coefficient & $\beta$ & $\mathrm{dyn}/\mathrm{cm}^4$ & $4\mathrm{e}+06$ \\
Storage coefficient & $c_0$ & $\mathrm{cm}^2/\mathrm{dyn}$ & $10^{-3}$ \\
Permeability & $K$ & $\mathrm{cm}^2$ & $10^{-6}\mathbf{I}$ \\
Lam\'e coefficient & $\mu_p$ & $\mathrm{dyn}/\mathrm{cm}^2$ & $5.575\mathrm{e}+05$ \\
Lam\'e coefficient & $\lambda_p$ & $\mathrm{dyn}/\mathrm{cm}^2$ & $1.7\mathrm{e}+06$ \\
BJS coefficient & $\gamma$ & $\mathrm{g}/(\mathrm{cm}^2\cdot \mathrm{s})$ & $10^3$ \\
Biot--Willis constant & $\alpha$ & -- & $1$ \\
Robin coupling constant & $L$ & -- & $1000$ \\
\hline
\end{tabular}
\end{table}

The numerical simulation was implemented in FEniCS on a half-domain by exploiting axial symmetry. Since the present benchmark involves a moving-domain Navier--Stokes--Biot model, rather than the Stokes--Biot formulation considered in the referenced works \cite{Parrow2026R, he2026lockfree}, we validate our solution through mesh convergence tests. On coarser meshes, we observed that the pressure wave propagates more slowly, which is indicative of numerical dissipation. Therefore, the results presented below correspond to the finest mesh. Specifically, the fluid domain was discretized using a structured triangular mesh with axial resolution $n_x=180$ and radial resolution $n_y=15$, yielding a total of $5400$ triangular elements. The poroelastic wall used the same axial resolution $n_x=180$ and radial resolution of $n_y=10$, resulting in \(3600\) triangular elements. Taylor--Hood \(P_2/P_1\) elements were used for the fluid variables, \(P_2\) elements were used for the structure displacement and structure velocity, and \(P_1\) elements for the pore pressure. For the ALE mesh problem, we use \(P_2\) elements. The time step was chosen to be $\Delta t=10^{-6}$, and the simulation was run until the final time $T=0.014\,\mathrm{s}$. In the code, the ALE displacement is first computed on the reference fluid mesh.
Then the tensor $
F=I+\widehat{\nabla}\boldsymbol{\eta}_{\mathrm{ALE}}
$
is projected onto a tensor-valued finite element space and used as the deformation
gradient of the ALE map. The Navier--Stokes equations are assembled on the
reference fluid domain by inserting $J=\det F$ and $F^{-1}$ into the weak form.
In particular, the mass term is multiplied by $J$, the ALE convection term uses
the transformed relative velocity $F^{-1}(\boldsymbol{u}_f-\boldsymbol{w})$, the
stress and pressure terms are transformed using $F^{-1}$. The time derivative and structure coupling are discretized by BDF2, while the nonlinear convection is treated semi-implicitly using the previous-time velocity $\boldsymbol{u}_f^n$ for stability.

In Figure \ref{fig:four_snapshots}, we report the snapshots of the pressure (fluid pressure and pore pressure superimposed) at different time moments, which agrees well with the work reported in \cite{Parrow2026R}.
\begin{figure}[htbp!]
    \centering
\includegraphics[width=0.95\textwidth]{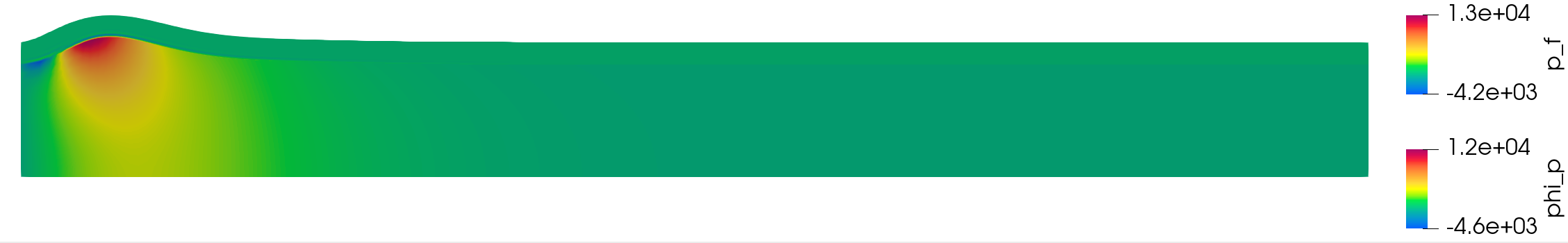}
    \hfill
\includegraphics[width=0.95\textwidth]{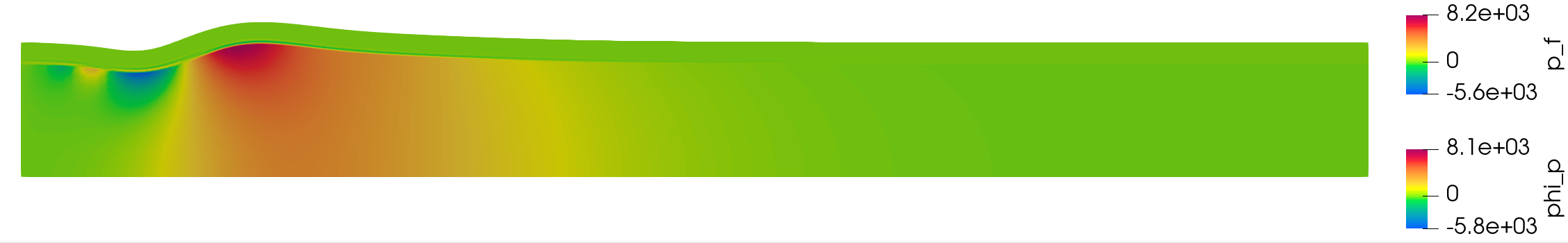}
\includegraphics[width=0.95\textwidth]{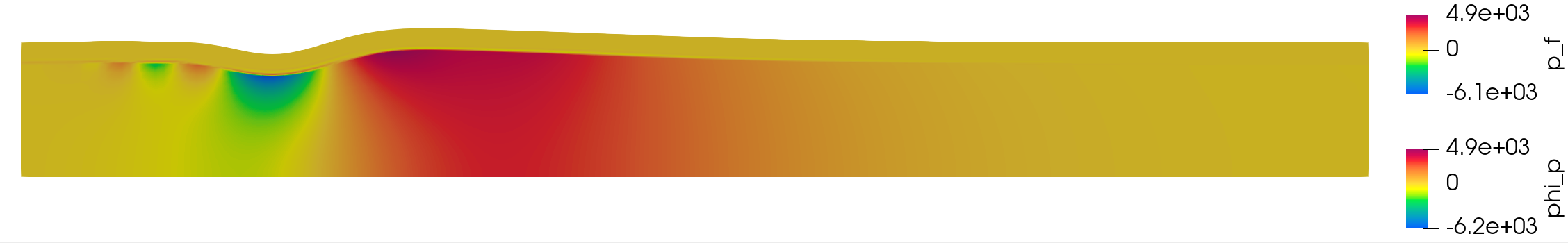}
\includegraphics[width=0.95\textwidth]{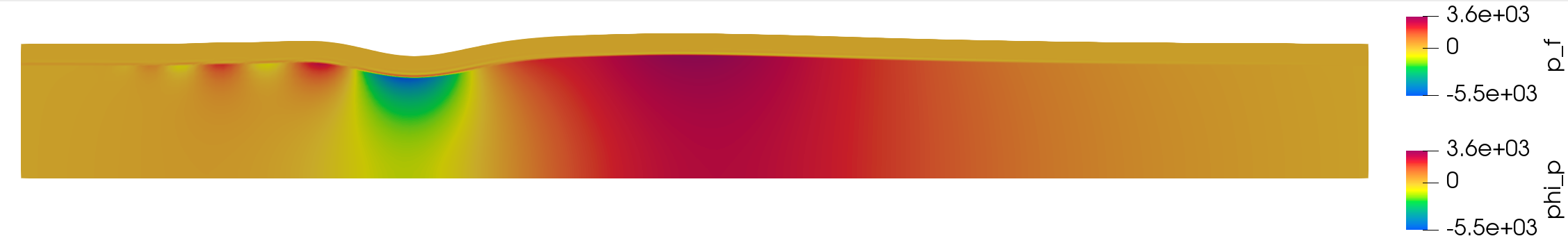}
\caption{Snapshots showing the fluid pressure $p_f$ and pore pressure $\phi_p$ at $T=0.0035\,\mathrm{s}$, $0.007\,\mathrm{s}$, $0.0105\,\mathrm{s}$, and $0.014\,\mathrm{s}$. The deformation is magnified by a factor of 5 for visualization.}
    \label{fig:four_snapshots}
\end{figure}
Figure \ref{fig:continuity_pressure} shows a portion of the computational domain at $t=0.014\,\mathrm{s}$. The mesh displacement has been magnified by a factor of $50$ to make the deformation visible. The pressure fields are superimposed on the deformed mesh.
The smooth transition of the pressure-like fields near the interface is
consistent with the normal-stress coupling imposed in the model.
\begin{figure}[htbp!]
    \centering
\includegraphics[width=0.95\textwidth]{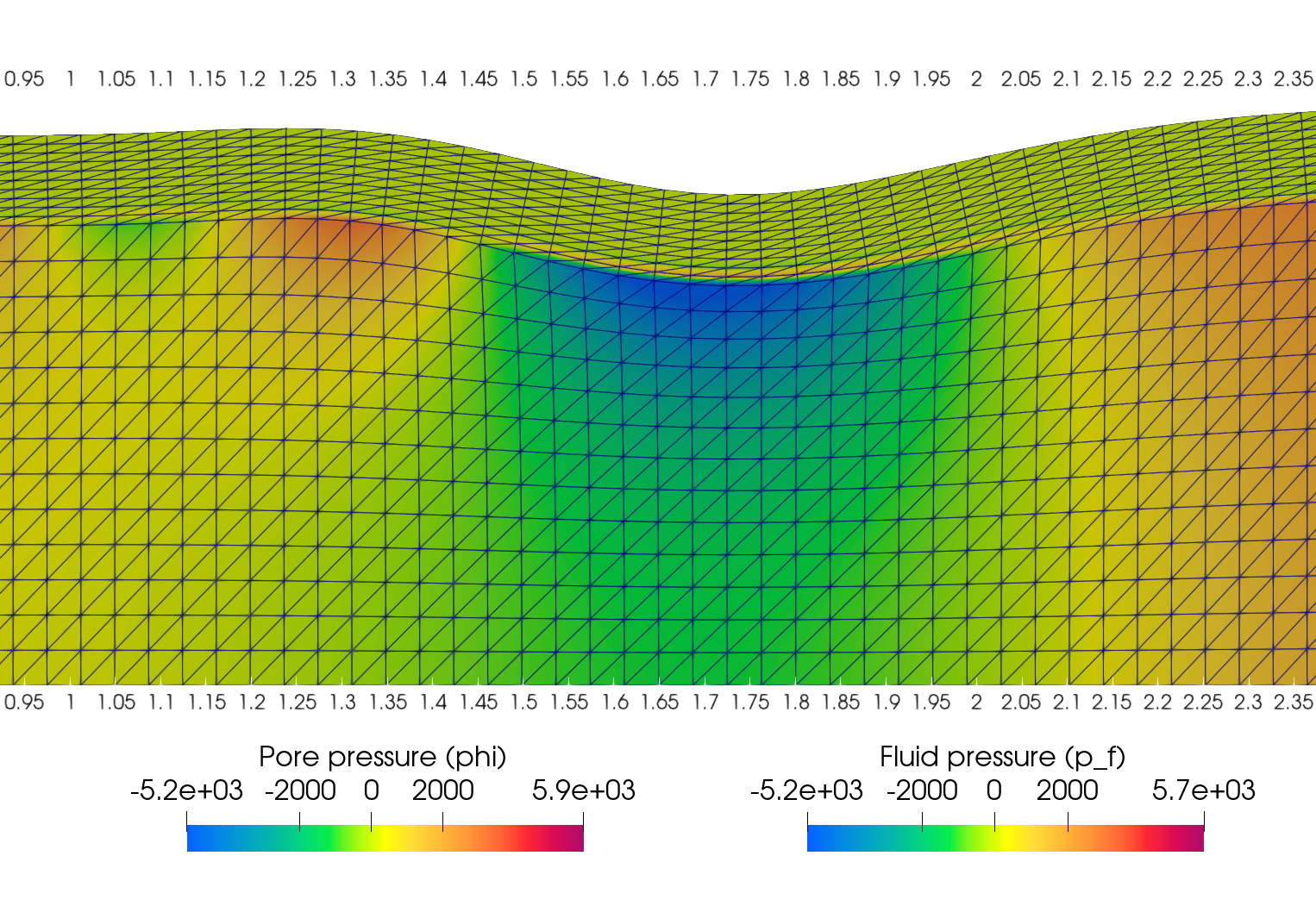}
\caption{Pressure fields near the fluid--poroelastic interface at $t=0.014\,\mathrm{s}$.
The mesh displacement is magnified by a factor of 5 for visualization. The pore
pressure $\phi$ in the poroelastic wall and the fluid pressure $p_f$ in the
lumen are superimposed on the deformed mesh.}
    \label{fig:continuity_pressure}
\end{figure}
In Figure \ref{fig:four_snapshots_v}, we also report the snapshots of the velocities (fluid velocity and poroelastic structure velocity superimposed) at different time moments.
\begin{figure}[htbp!]
    \centering
\includegraphics[width=0.95\textwidth]{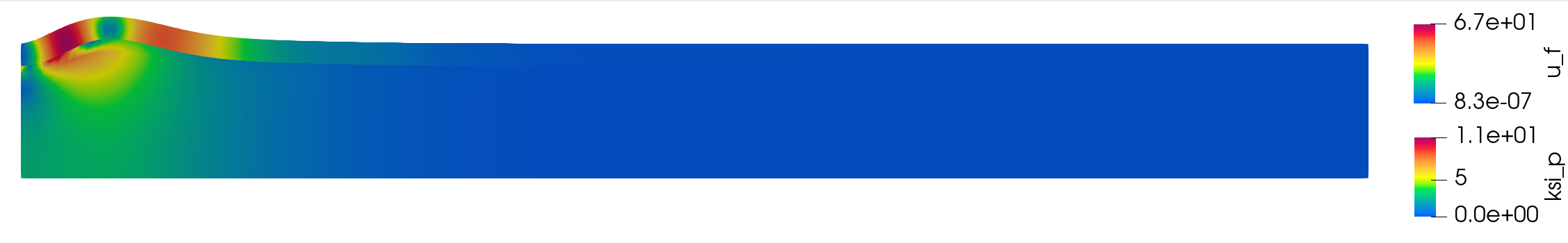}
    \hfill
\includegraphics[width=0.95\textwidth]{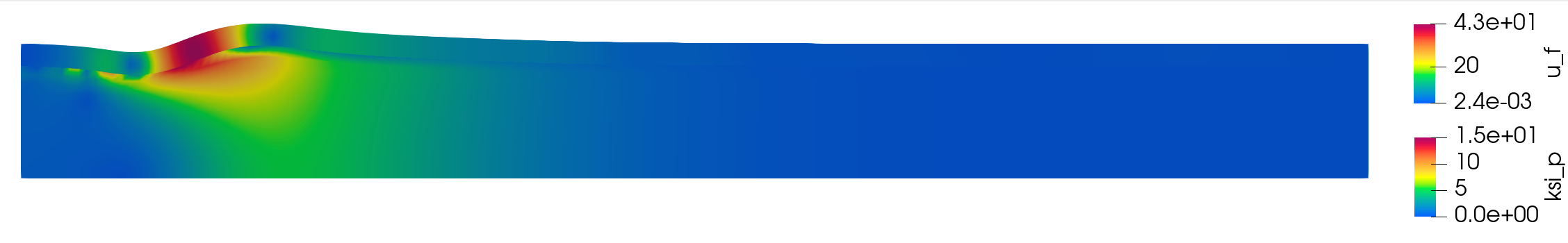}
\includegraphics[width=0.95\textwidth]{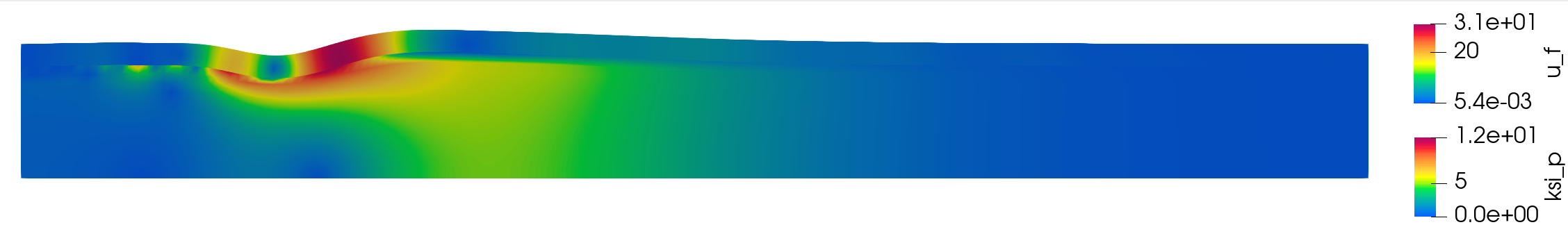}
\includegraphics[width=0.95\textwidth]{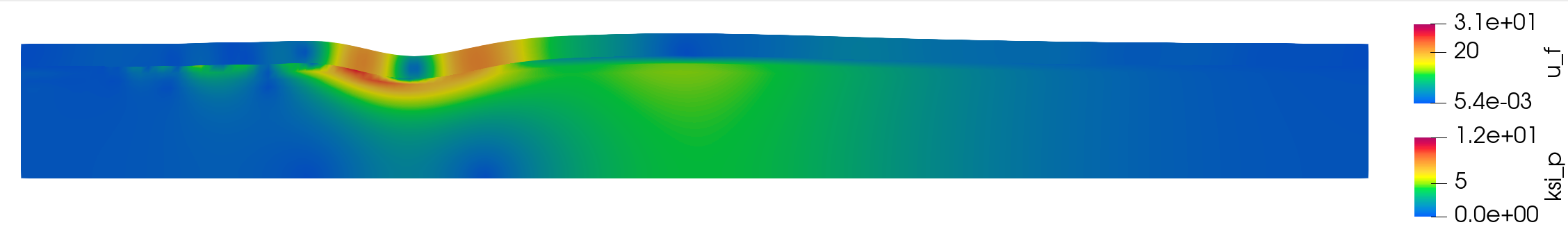}
\caption{Snapshots showing the fluid velocity $\boldsymbol{u}_f$ and poroelastic structure velocity $\boldsymbol{\xi}_p$ at $T=0.0035\,\mathrm{s}$, $0.007\,\mathrm{s}$, $0.0105\,\mathrm{s}$, and $0.014\,\mathrm{s}$. The deformation is magnified by a factor of 5 for visualization.}
    \label{fig:four_snapshots_v}
\end{figure}

We also report the pressure and fluid velocity solutions at the fluid--structure interface $y=0.5$ and along the channel centerline $y=0$. Figures~\ref{fig:1d_snapshots_v} and~\ref{fig:1d_snapshots_p} show snapshots of the fluid velocity $\boldsymbol{u}_f$ and pressure $p_f$, respectively, at $T=0.007\,\mathrm{s}$, $0.0105\,\mathrm{s}$, and $0.014\,\mathrm{s}$. In each figure, the top row corresponds to the solutions at the fluid--structure interface, while the bottom row corresponds to the solutions at the channel centerline. In addition, Figure~\ref{fig:1d_snapshots_d} presents the corresponding $x$- and $y$-components of the structure displacement $\boldsymbol{\eta}$ at the fluid--structure interface $y=0.5$ at the same time instances.

\begin{figure}[htbp!]
    \centering
\includegraphics[width=0.31\textwidth]{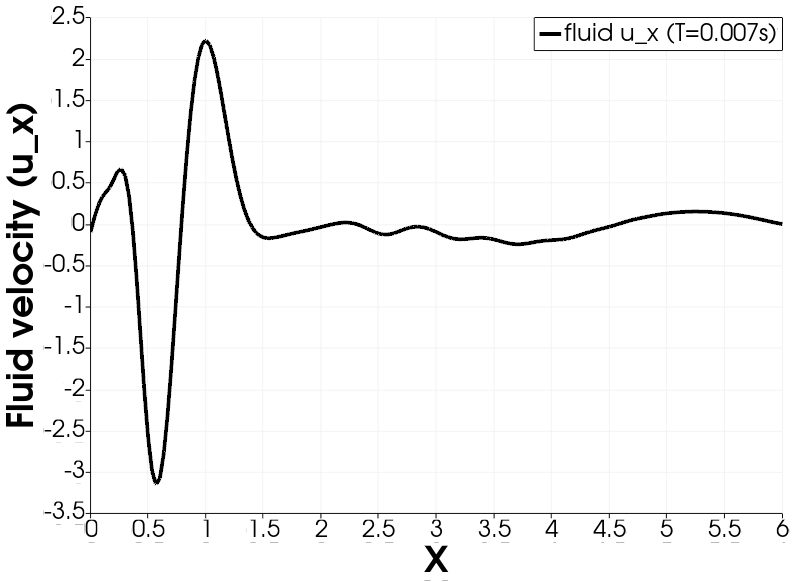}
\includegraphics[width=0.31\textwidth]{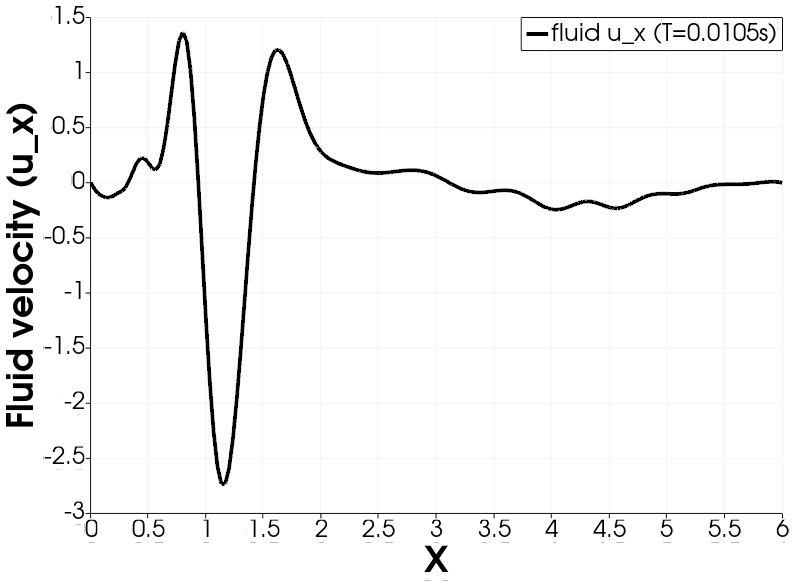}
\includegraphics[width=0.31\textwidth]{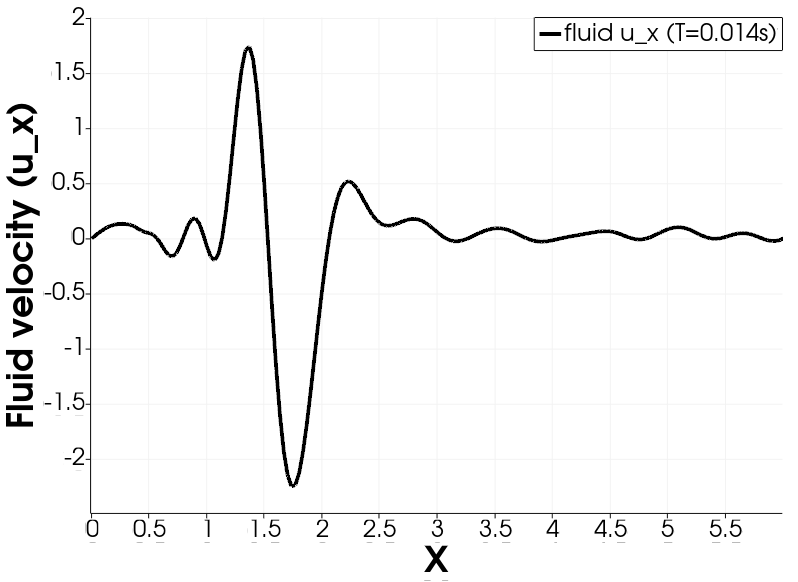}
\includegraphics[width=0.31\textwidth]{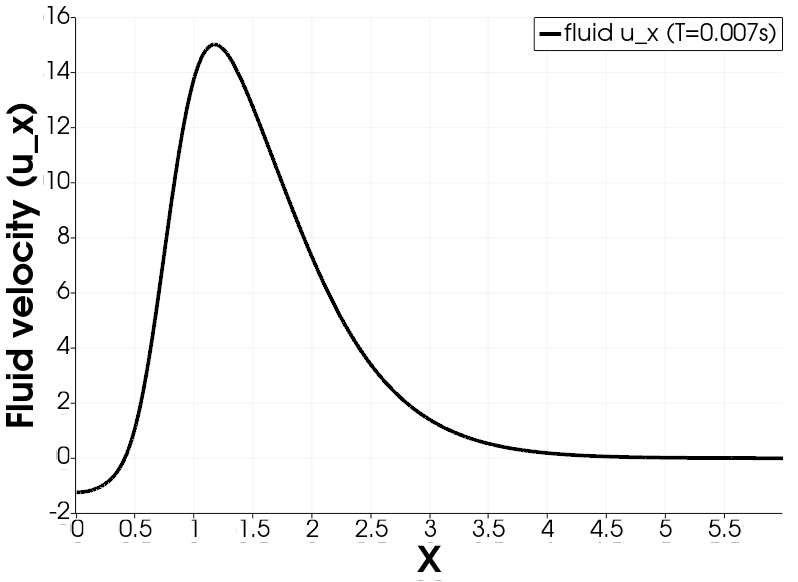}
\includegraphics[width=0.31\textwidth]{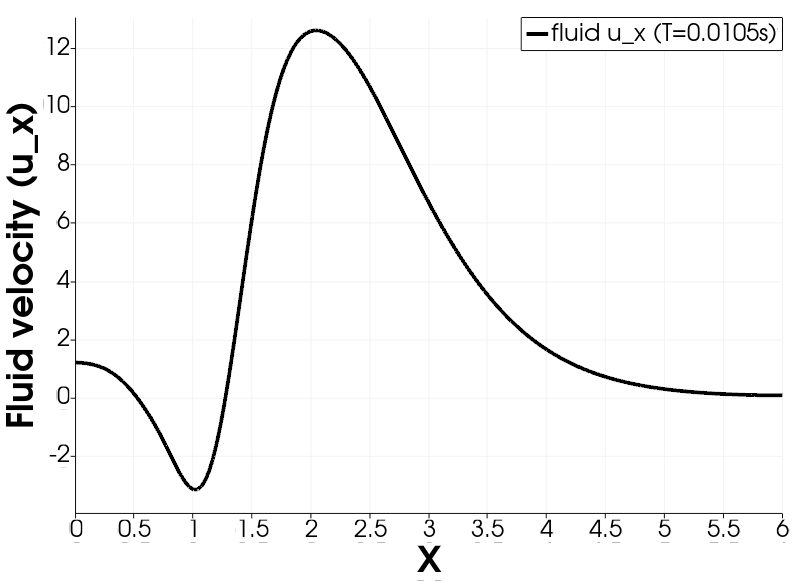}
\includegraphics[width=0.31\textwidth]{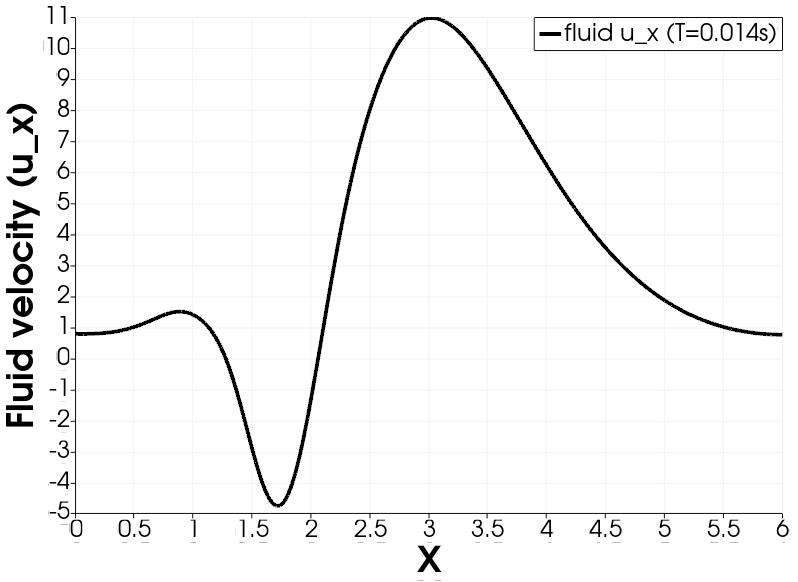}
\caption{Snapshots showing the fluid velocity $\boldsymbol{u}_f$ at the fluid–structure interface $y=0.5$ (Top row) and along the channel centerline $y=0$ (bottom row), at $T=0.007\,\mathrm{s}$, $0.0105\,\mathrm{s}$, and $0.014\,\mathrm{s}$ respectively. }
    \label{fig:1d_snapshots_v}
\end{figure}

\begin{figure}[htbp!]
    \centering
\includegraphics[width=0.31\textwidth]{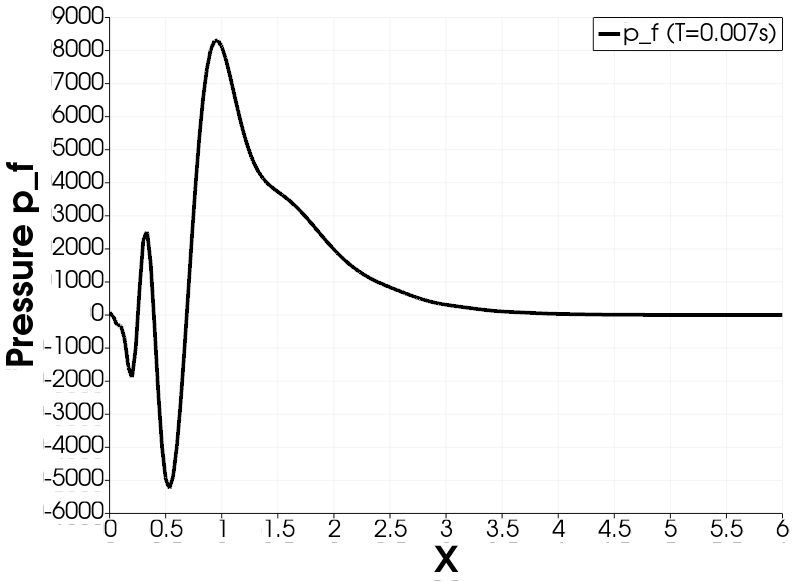}
\includegraphics[width=0.31\textwidth]{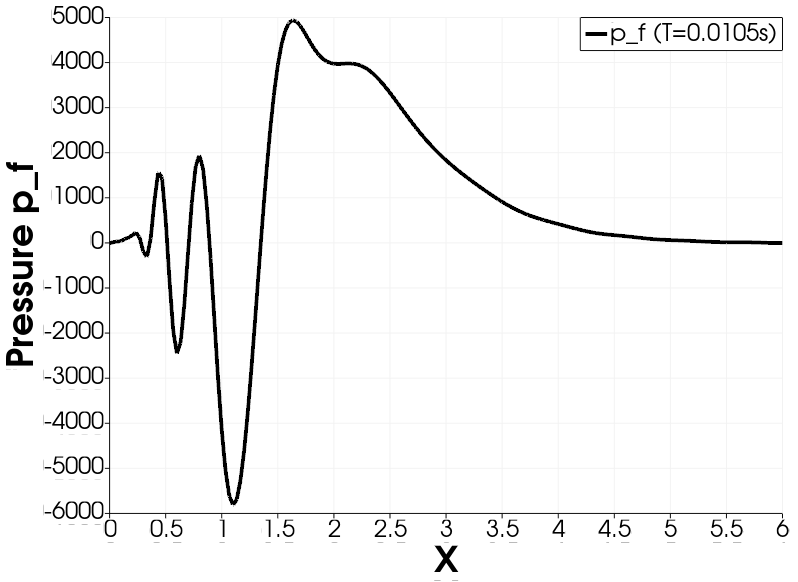}
\includegraphics[width=0.31\textwidth]{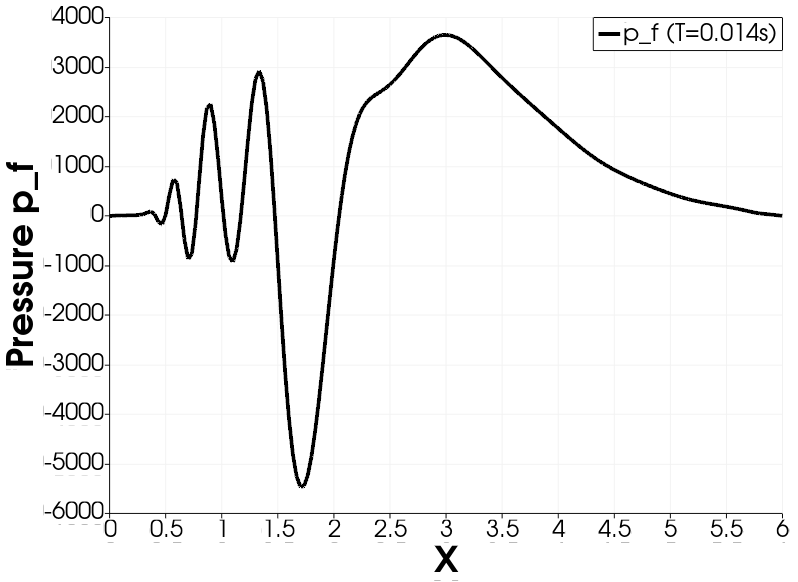}
\includegraphics[width=0.31\textwidth]{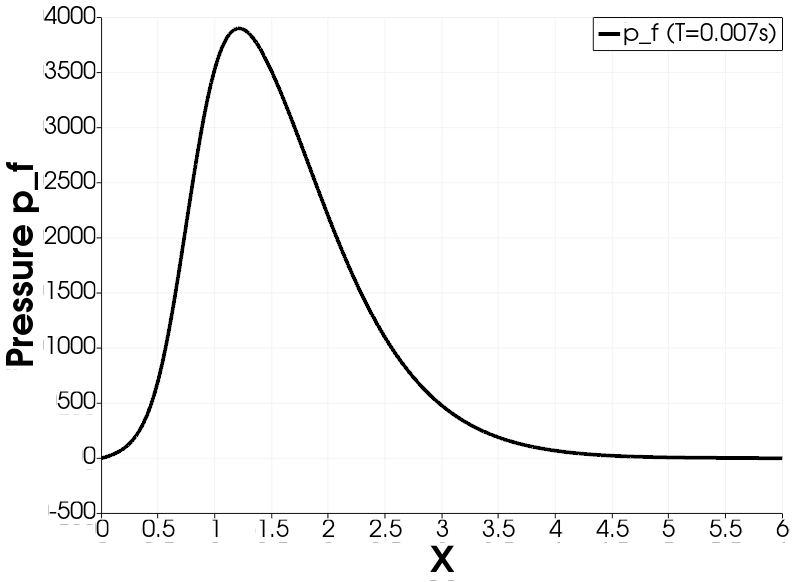}
\includegraphics[width=0.31\textwidth]{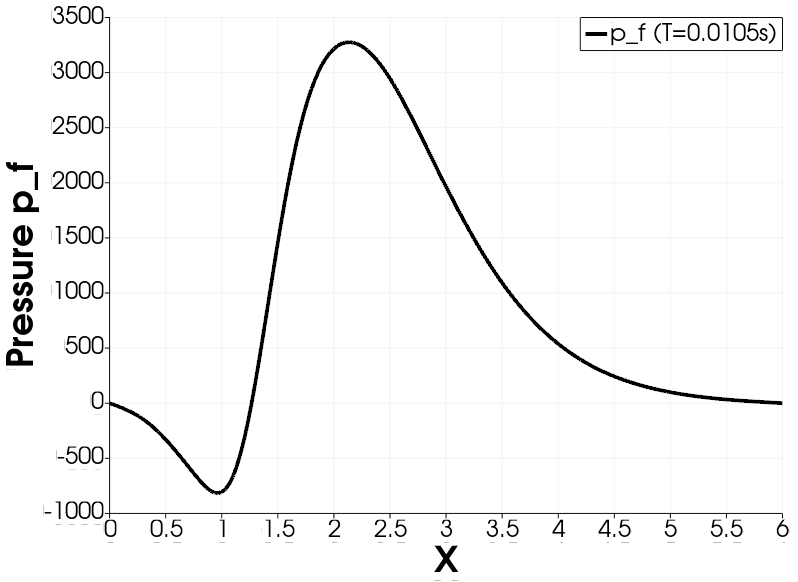}
\includegraphics[width=0.31\textwidth]{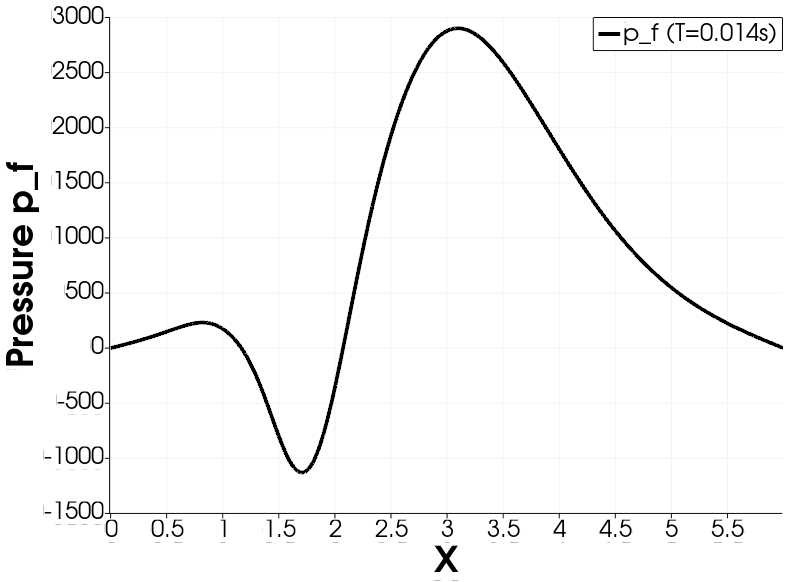}
\caption{Snapshots showing the fluid pressure $p_f$ at the fluid–structure interface $y=0.5$ (Top row) and along the channel centerline $y=0$ (bottom row), at $T=0.007\,\mathrm{s}$, $0.0105\,\mathrm{s}$, and $0.014\,\mathrm{s}$ respectively. } 
    \label{fig:1d_snapshots_p}
\end{figure}

\begin{figure}[htbp!]
    \centering
\includegraphics[width=0.31\textwidth]{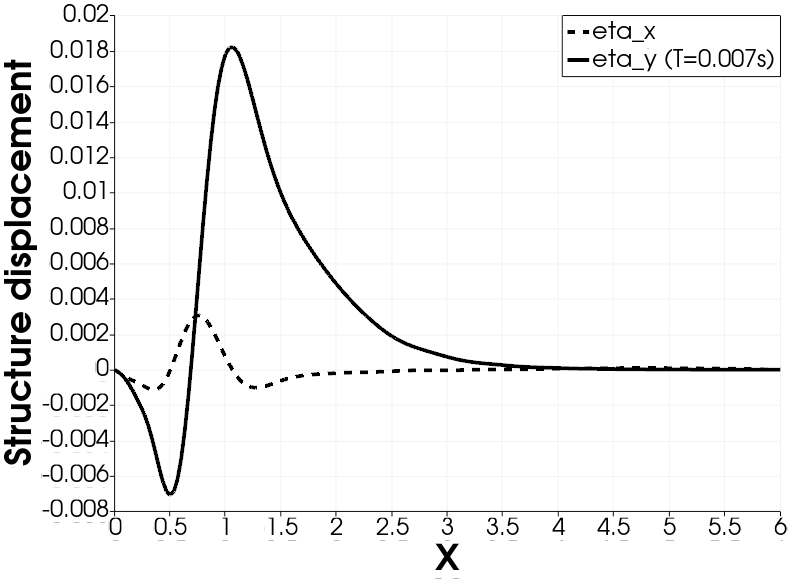}
\includegraphics[width=0.31\textwidth]{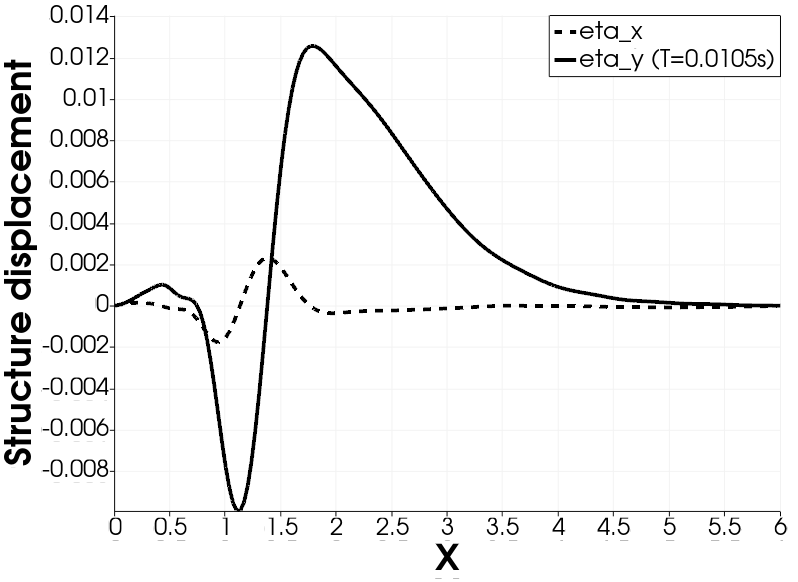}
\includegraphics[width=0.31\textwidth]{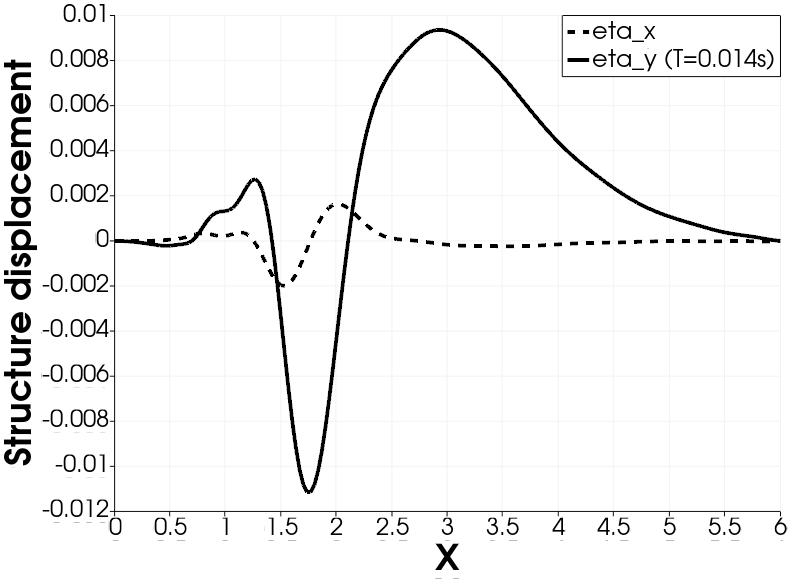}
\caption{Snapshots showing the $x ,y$ components of structure displacement $\boldsymbol{\eta}$ at the fluid–structure interface $y=0.5$, at $T=0.007\,\mathrm{s}$, $0.0105\,\mathrm{s}$, and $0.014\,\mathrm{s}$ respectively. } 
    \label{fig:1d_snapshots_d}
\end{figure}

\section{Conclusion}
\label{sec:main6}
We have developed and analyzed a fully discrete second-order explicit splitting scheme for fluid-poroelastic structure interaction problems governed by the Stokes-Biot system in a fixed domain. 
The method is based on a Robin reformulation of the interface conditions \cite{Wang2026ExplicitSplitting, wang2025} and combines BDF2 time stepping in the subdomains with AB2 extrapolation of the interface data. 
This yields a partitioned algorithm in which the fluid and poroelastic subproblems can be solved independently and in parallel at each time step and the scheme remains second order in time.

The main theoretical contribution of this work is a rigorous stability and error analysis for this explicit splitting strategy. 
By exploiting BDF2 energy identities and a careful treatment of the extrapolated interface residuals, we derived a closed discrete stability estimate under a CFL condition. 
We then introduced suitable projection operators in the fluid and poroelastic subdomains and used them to split the total error into approximation, consistency, and discrete components. 
This led to a discrete error energy inequality and, ultimately, to an a priori error estimate in the bulk energy norms induced by the partitioned formulation, together with the tangential interface mismatch controlled by the Robin coupling.
Under the stated regularity assumptions and second-order initialization, the method shows second-order accuracy in time together with optimal spatial convergence for the polynomial degrees covered by the analysis.

In the numerical experiment with a manufactured solution test, we observed that the computed temporal errors for the fluid velocity and pressure, structure displacement and velocity, and pore pressure are all close to second order. 
The spatial experiments also show nearly second order convergence for the corresponding finite element spaces, including Taylor-Hood approximation for the Stokes subproblem and compatible finite element spaces for the poroelastic variables. 
Overall, the results indicate that the proposed method offers a practical compromise between modularity, parallel efficiency, and provable accuracy.

Several extensions remain of interest. 
On the analytical side, it would be useful to investigate whether the CFL restriction can be relaxed or improved for particular choices of interface parameters and finite element spaces. 
On the modeling side, an important next step is to extend the present framework beyond fixed domains to moving-interface and geometrically nonlinear fluid-poroelastic interaction problems. 
Such developments would broaden the applicability of second-order explicit partitioned schemes to more realistic multiphysics settings.

\section*{Acknowledgement}
{\sunny{
\v{C}ani\'{c}'s research has been supported in part by the
National Science Foundation under grants DMS-2408928, DMS-2247000 and by the  U.S. Department of Energy, Office of Science, Office of Advanced Scientific Computing Research's Applied Mathematics Competitive Portfolios program under Contract No. AC02-05CH11231.
Wang’s research has been supported in part by the National Science Foundation under grant DMS-2247001, CPRIT Texas under grant RP260780, and by Simons Foundation Travel Award.}}

\bibliographystyle{siamplain}

\bibliography{merged_references_clean}
\end{document}